\newtheorem{theorem}{Theorem}[section]
\newtheorem{corollary}[theorem]{Corollary}
\newtheorem{lemma}[theorem]{Lemma}
\theoremstyle{definition}
\newtheorem{remark}[theorem]{Remark}
\newtheorem{definition}[theorem]{Definition}
\title{Divided difference operators in equivariant $KK$-theory}
\author{Ho-Hon Leung}
\date{}
\begin{document}

\begin{abstract}
Let $G$ be a compact connected Lie group with a maximal torus $T$. Let $A$, $B$ be $G$-$\mathrm{C}^\ast$-algebras. We define certain divided difference operators on Kasparov's $T$-equivariant $KK$-group $KK_T(A,B)$ and show that $KK_G(A,B)$ is a direct summand of $KK_T(A,B)$. More precisely, a $T$-equivariant $KK$-class is $G$-equivariant if and only if it is annihilated by an ideal of divided difference operators. This result is a generalization of work done by Atiyah, Harada, Landweber and Sjamaar.
\end{abstract}

\maketitle

{\bf Keywords:} Equivariant $KK$-theory; Divided Difference Operators\\

{\bf Mathematics Subject Classification:} 19K35, 19L47

\section{Introduction} \label{section1}	
Let $G$ be a compact connected Lie group, $T$ be a maximal torus of $G$ and $X$ be a compact $G$-space. In \cite{Atiyah}, Atiyah showed that $K_G^\ast(X)$ is a direct summand of $K_T^\ast(X)$. The restriction map from the $G$-equivariant $K$-ring $K_G^\ast(X)$ to the $T$-equivariant $K$-ring $K_T^\ast(X)$ has a natural left inverse. This pushforward homomorphism is defined by means of the Dolbeault operator associated with an invariant complex structure on the homogeneous space $G/T$. In \cite{Sjamaar}, Harada, Landweber and Sjamaar showed that the action of the Weyl group $W$ on $K_T^\ast(X)$ extends to an action of a Hecke ring $\mathscr{D}$ generated by \emph{divided difference operators}, which was first introduced in the context of Schubert calculus by Demazure \cite{Demazure1}, \cite{Demazure2} and \cite{Demazure}. The ring $\mathscr{D}$ contains an \emph{augmentation left ideal} $I(\mathscr{D})$ and they showed that $K_G^\ast(X)$ is isomorphic to the subring of $K_T^\ast(X)$ annihilated by $I(\mathscr{D})$.

This paper can be seen as a natural generalization of these results from equivariant $K$-theory to equivariant $KK$-theory introduced by Kasparov \cite{Kasparov1}, \cite{Kasparov2}. First, we extend the action of the ring $\mathscr{D}$ to the Kasparov's $T$-equivariant $KK$-group $KK_T(A,B)$ where $A$ and $B$ are $G$-$\mathrm{C}^\ast$-algebras. Next, we show that $KK_G(A,B)$ is isomorphic to $KK_T(A,B)$ annihilated by $I(\mathscr{D})$. These are the main results in Section \ref{section2}. The key results of this paper rely on theorems due to Wasserman \cite{Wasserman}. Since it is unpublished, we will prove Wasserman's theorems in Section \ref{Wassermansection} and Section \ref{Rosenbergsection}. 

\section{Main Results} \label{section2}
We will first briefly introduce the basic notations of equivariant $KK$-theory which was originally due to Kasparov. For a more detailed account of $KK$-theory, see \cite{JensenThomsen}, \cite{Kasparov1} and \cite{Kasparov2}.

A \emph{$G$-equivariant Kasparov $A-B$-module} is a triple $\mathcal{E}=(E,\phi,F)$ where $E$ is a countably generated graded $G$-Hilbert $B$-module, $A,B$ are $G$-$\mathrm{C}^\ast$-algebras, $\phi\colon A\rightarrow\mathrm{B}(E)$ is a graded $G$-$\ast$-homomorphism and $F\in\mathrm{B}(E)$ is an element of degree 1 such that for $a\in A$, $[F,\phi(a)]\in\mathrm{K}(E)$, $(F^2-1)\phi(a)\in\mathrm{K}(E)$, $(F^\ast-F)\phi(a)\in\mathrm{K}(E)$ and $(g.F-F)\phi(a)\in\mathrm{K}(E)$. The map $g\mapsto g.F-F$ is norm-continuous. $\mathrm{K}(E)$ is the set of $G$-equivariant compact operators acting on $E$. Note that the commutator $[F,\phi(a)]$ is graded and the continuous $G$-action on $E$ preserves the grading. We denote the set of $G$-equivariant Kasparov $A-B$-modules by $\mathbb{E}_G(A,B)$. 

We shall define $G$-equivariant $KK$-theory as follows. $\pi_t, t\in [0,1]$ is the surjection $IB=B\otimes C([0,1])\rightarrow B$ obtained by an evaluation at $t$. When $B$ is graded by an automorphism $\beta_B$, we consider $IB=B\otimes C([0,1])$ to be graded by the automorphism $\beta_B \otimes id$. $G$ acts on $B \otimes C([0,1])$ by $g.(b\otimes f)=(g.b)\otimes f$ for all $b\in B$, $g\in G$ and $f\in C([0,1])$. $\pi_t$ is then a graded $G$-$\ast$-homomorphism for all $t\in [0,1]$. If $\mathcal{E}\in\mathbb{E}_G(A,IB)$, we obtain a $G$-equivariant Kasparov $A-B$-module for each $t\in [0,1]$ by the \emph{pushout} $\mathcal{E}_{\pi_t}=[E_{\pi_t},\phi_{\pi_t},F_{\pi_t}]\in\mathbb{E}_G(A,B)$ where $E_{\pi_t}=E\otimes_{\pi_t}B$ is a $G$-Hilbert $B$-module by an internal tensor product of $E$ and $B$. $F_{\pi_t}$ and $\phi_{\pi_t}$ can then be defined accordingly.

Two $G$-equivariant Kasparov $A-B$-modules $\mathcal{E}_1=[E_1,\phi_1,F_1]$, $\mathcal{E}_2=[E_2,\phi_2,F_2]\in\mathbb{E}_G(A,B)$ are \emph{isomorphic} when there is a graded isomorphism $\psi\colon\mathcal{E}_1\rightarrow\mathcal{E}_2$ of $G$-Hilbert $B$-modules such that $F_2\circ \psi=\psi\circ F_1$ and $\phi_2(a)\circ\psi=\psi\circ\phi_1(a)$ for all $a\in A$. We write $\mathcal{E}_1\cong\mathcal{E}_2$ in this case.

Two $G$-equivariant Kasparov $A-B$ modules $\mathcal{E}, \mathcal{F}\in\mathbb{E}_G(A,B)$ are called \emph{homotopic} when there is a Kasparov $A-IB$-module $\mathcal{G}\in\mathbb{E}_G(A,IB)$ such that $\mathcal{G}_{\pi_0}\cong \mathcal{E}$ and $\mathcal{G}_{\pi_1}\cong\mathcal{F}$. It can be checked that it defines an equivalent relation on $\mathbb{E}_G(A,B)$, see \cite{JensenThomsen}. Then $KK_G(A,B)$ is the set of equivalence classes in $\mathbb{E}_G(A,B)$ under this equivalence relation.

\vspace{3mm}

Let $G$ be a compact Lie group and $T$ be its maximal torus. Let $i\colon T\rightarrow G$ be the inclusion from $T$ to $G$. Then every $G$-$\mathrm{C}^\ast$-algebra $A$ can be naturally considered as a $T$-$\mathrm{C}^\ast$-algebra via $i$, that is, $t.x=i(t)x$ where $t\in T$ and $x\in A$. Hence we have a map naturally induced from $i$, \[i^\ast\colon KK_G(A,B)\longrightarrow KK_T(A,B)\]for all $G$-$\mathrm{C}^\ast$-algebras $A$ and $B$. This map is also called the \emph{restriction map} and we will also make use of a more descriptive notation as follows: \[res^G_T\colon KK_G(A,B)\longrightarrow KK_T(A,B).\]

The goal of Sections \ref{section2.1} to \ref{section2.4} is to show that there is a left inverse $i_!\colon KK_T(A,B)\rightarrow KK_G(A,B)$ of $i^\ast\colon KK_G(A,B)\rightarrow KK_T(A,B)$. That is, 
\[i_!\circ i^\ast=1\colon KK_G(A,B)\rightarrow KK_G(A,B).\]Then we will prove our main Theorem \ref{main theorem2} in Section \ref{section2.5} which describes the subgroup $i^\ast(KK_G(A,B))$ by using \emph{divided difference operators}. The notion of divided difference operators will be introduced in Section \ref{section2.5}.

\subsection{Construction of $[i^\ast]\in KK_G(\mathbbm{C},C(G/T))$}\label{section2.1}

If $A$ is a $T$-$\mathrm{C}^\ast$-algebra, define $Ind^G_T(A)$ to be the $G$-$\mathrm{C}^\ast$-algebra of all continuous functions $f\colon G\rightarrow A$ such that $f(gt)=t^{-1}f(g)$ for all $g\in G$, $t\in T$ and $||f||$ vanishes at infinity. The $G$-action on $Ind^G_T(A)$ is by the left translation. Then there is a fairly natural way to define the \emph{induction} map \[ind^G_T\colon KK_T(A,B)\longrightarrow KK_G(Ind^G_T(A), Ind^G_T(B))\]for all $T$-$\mathrm{C}^\ast$-algebras $A$ and $B$. Its definition and properties will be explained in details in Section \ref{Wassermansection}. 

If $B$ is a $G$-$\mathrm{C}^\ast$-algebra, denote $Res^G_T(B)$ to be the $T$-$\mathrm{C}^\ast$-algebra by restricting the $G$-action to the $T$-action. It can be shown that for all $G$-$\mathrm{C}^\ast$-algebras $A$, $Ind^G_T(Res^G_T(A))$ is \emph{equivariantly isomorphic} to $A\otimes C(G/T)$, see Section \ref{Wassermansection}. 

We shall construct an element $[i^\ast]\in KK_G(\mathbb{C},C(G/T))$ corresponding to \[i^\ast\colon KK_G(A,B)\rightarrow KK_T(A,B).\]Define \[[i^\ast]=[C(G/T),id_\mathbb{C},0]\in KK_G(\mathbb{C},C(G/T))\]where $id_\mathbb{C}$ stands for the scalar multiplication and $C(G/T)$ is naturally viewed as a $G$-Hilbert $C(G/T)$-module. We need the following result by Wasserman \cite{Wasserman}:

\begin{theorem}[Wasserman] \label{Wasserman1}
Let $G$ be a compact group, and $T$ be its closed subgroup. If $A$ and $B$ are $G$-$\mathrm{C^\ast}$-algebras, then $KK_T(A,B)\cong KK_G(A,B\otimes C(G/T))$. Precisely speaking, if $x\in KK_T(A,B)$, then there is an isomorphism $x\mapsto j^\ast(ind^G_T(x))$ where $j^\ast$ is the map induced by the inclusion $j\colon A\cong A\otimes 1\longrightarrow A\otimes C(G/T)\cong Ind^G_T(A)$. And the inverse is given by $y\mapsto ev_\ast(res^G_T(y))$ for $y\in KK_G(A,B\otimes C(G/T))$ where $ev\colon B\otimes C(G/T)\rightarrow B$ is the evaluation at identity, i.e. $b\otimes f\mapsto bf(1)$. 
\end{theorem}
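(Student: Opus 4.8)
The plan is to establish the isomorphism $KK_T(A,B) \cong KK_G(A, B \otimes C(G/T))$ by showing the two maps $x \mapsto j^\ast(\mathrm{ind}^G_T(x))$ and $y \mapsto \mathrm{ev}_\ast(\mathrm{res}^G_T(y))$ are mutually inverse. This is really two separate composites to check, so I would handle them in turn, leaning heavily on the functorial and multiplicative properties of $\mathrm{ind}^G_T$, $\mathrm{res}^G_T$, and the Kasparov product that are to be catalogued in Section \ref{Wassermansection}.

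For the composite $KK_T(A,B) \to KK_G(A, B\otimes C(G/T)) \to KK_T(A,B)$: starting with $x \in KK_T(A,B)$, I would first apply the key compatibility $\mathrm{res}^G_T \circ j^\ast \circ \mathrm{ind}^G_T = (\text{something built from the embedding } T \hookrightarrow G)$. Concretely, one uses that $\mathrm{res}^G_T(\mathrm{ind}^G_T(x))$, after the identifications $\mathrm{Ind}^G_T(\mathrm{Res}^G_T A) \cong A \otimes C(G/T)$, corresponds to $x \otimes 1_{C(G/T)}$ twisted by the $T$-space structure of $G/T$ — and the crucial point is that $G/T$ has a canonical $T$-fixed point (the identity coset $eT$), so $\mathrm{res}^G_T$ followed by $\mathrm{ev}_\ast$ at that fixed point collapses the $C(G/T)$ factor back to a point. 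Chasing the definitions of $\mathrm{ind}^G_T$ and of $j$, the composite $\mathrm{ev}_\ast \circ \mathrm{res}^G_T \circ j^\ast \circ \mathrm{ind}^G_T$ should reduce to evaluation of an induced function at $e \in G$, which returns the original module, i.e. the identity on $KK_T(A,B)$.

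For the reverse composite $KK_G(A,B\otimes C(G/T)) \to KK_T(A,B) \to KK_G(A,B\otimes C(G/T))$: here I would exploit that $G/T$ is a \emph{homogeneous} $G$-space, so the $C(G/T)$-factor carries a transitive $G$-action and is determined by its value at a single point together with the $G$-action. The composite sends $y$ to $j^\ast(\mathrm{ind}^G_T(\mathrm{ev}_\ast(\mathrm{res}^G_T(y))))$; the plan is to show this equals $y$ by identifying $j^\ast \circ \mathrm{ind}^G_T$ applied to a restricted-then-evaluated class with the original, using that $\mathrm{ind}^G_T$ is (up to the isomorphism $\mathrm{Ind}^G_T\mathrm{Res}^G_T \cong -\otimes C(G/T)$) a one-sided inverse to $\mathrm{res}^G_T$ in an appropriate sense. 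Transitivity of the $G$-action on $G/T$ is what promotes "agreement at one point after restriction" to "agreement as $G$-equivariant classes."

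\textbf{The main obstacle} I anticipate is the bookkeeping around the identification $\mathrm{Ind}^G_T(\mathrm{Res}^G_T(A)) \cong A \otimes C(G/T)$ at the level of Hilbert modules and operators, and verifying that $\mathrm{ind}^G_T$ is well-defined on homotopy classes and behaves functorially (in particular $\mathrm{ind}^G_T \circ \mathrm{res}^G_T$, and its interaction with the evaluation $\mathrm{ev}$ and the embedding $j$) — precisely the material that the excerpt defers to Section \ref{Wassermansection}. Granting those structural facts, the proof becomes a diagram chase: the two composites each collapse because $G/T$ has a $T$-fixed point (making $\mathrm{ev}_\ast$ a genuine retraction) and because $G$ acts transitively on $G/T$ (making that retraction injective on $G$-classes). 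I would therefore organize the argument so that all the analytic subtleties are isolated in the construction of $\mathrm{ind}^G_T$ and its properties, after which Theorem \ref{Wasserman1} follows formally.
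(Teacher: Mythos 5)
Your proposal follows essentially the same route as the paper: build the induction functor $\mathrm{ind}^G_T$ explicitly at the level of Hilbert modules, then verify by direct computation that the two composites are the identity, using the identity coset $eT \in G/T$ as the $T$-fixed point where $\mathrm{ev}_\ast$ retracts (the paper's isomorphism $Q\colon f_E \otimes b \mapsto f_E(1)b$) and the homogeneity of $G/T$ to reconstruct a $G$-module from its value at that point (the paper's isomorphism $\Phi\colon ef_B \mapsto (g \mapsto g^{-1}e \otimes f_B(g))$). The one technical point you did not flag but which the paper needs for the second composite is to first replace the operator in the Kasparov module by a $G$-invariant one (Blackadar, Prop.\ 20.2.4) before the reconstruction argument goes through; otherwise the plan matches.
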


For a proof of it, see Section \ref{Wassermansection}. Let $\theta$ be the isomorphism $ev_\ast\circ res^G_T\colon KK_G(A,B\otimes C(G/T))\rightarrow KK_T(A,B)$.

\begin{lemma} \label{mainlemma}
For any element $x\in KK_G(A,B)$, \[\theta(x\otimes_\mathbb{C}[i^\ast])=i^\ast(x)\in KK_T(A,B).\]
\end{lemma}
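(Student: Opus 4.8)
The plan is to unwind both sides of the claimed identity in terms of the explicit maps supplied by Theorem~\ref{Wasserman1} and the explicit Kasparov cycle representing $[i^\ast]$. Recall $\theta = ev_\ast \circ res^G_T$. So the left-hand side is $ev_\ast\bigl(res^G_T(x\otimes_{\mathbb C}[i^\ast])\bigr)$, where $x\in KK_G(A,B)$ and $[i^\ast]=[C(G/T), id_{\mathbb C}, 0]\in KK_G(\mathbb C, C(G/T))$. First I would observe that the exterior Kasparov product $x\otimes_{\mathbb C}[i^\ast]\in KK_G(A, B\otimes C(G/T))$ is computed very simply: since $[i^\ast]$ is represented by a cycle with zero operator and the identity representation of $\mathbb C$ on the $G$-Hilbert $C(G/T)$-module $C(G/T)$, the product is just the ``tensor with $C(G/T)$'' operation on cycles. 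Concretely, if $x=[E,\phi,F]$ then $x\otimes_{\mathbb C}[i^\ast]=[E\otimes C(G/T),\ \phi\otimes id,\ F\otimes id]$ as a $G$-equivariant Kasparov $A$--$(B\otimes C(G/T))$-module. This identification is the standard behaviour of the external product against a ``rank-one'' class and should be stated as such, with a reference to the relevant product computation in \cite{JensenThomsen} or \cite{Kasparov1}.

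Next I would apply $res^G_T$, which simply forgets the $G$-action down to the $T$-action and changes nothing about the underlying module, representation, or operator; then apply $ev_\ast$, which is the pushforward along $ev\colon B\otimes C(G/T)\to B$, $b\otimes f\mapsto bf(1)$, i.e. evaluation of $C(G/T)$-valued data at the identity coset $eT$. Pushing the cycle $[E\otimes C(G/T),\ \phi\otimes id,\ F\otimes id]$ forward along $ev$ replaces the coefficient module $E\otimes C(G/T)$ by its internal tensor product over $ev$, which is canonically the fibre of $E\otimes C(G/T)$ at $eT$, namely $E$ itself (as a $T$-Hilbert $B$-module, since evaluation at $eT$ is $T$-equivariant when $C(G/T)$ carries the left-translation action and $eT$ is $T$-fixed). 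Under this identification $\phi\otimes id$ becomes $\phi$ and $F\otimes id$ becomes $F$, now regarded $T$-equivariantly. Hence $\theta(x\otimes_{\mathbb C}[i^\ast])=[E,\phi,F]$ viewed in $KK_T(A,B)$, which is by definition $res^G_T(x)=i^\ast(x)$.

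The key steps, in order: (i) identify the external product $x\otimes_{\mathbb C}[i^\ast]$ with the explicit cycle obtained by tensoring the coefficients with $C(G/T)$; (ii) observe $res^G_T$ leaves the cycle intact as a $T$-cycle; (iii) identify the pushforward $ev_\ast$ of the $C(G/T)$-tensored cycle with restriction to the fibre over the $T$-fixed point $eT$, using that internal tensor product along an evaluation homomorphism is fibre evaluation; (iv) check $T$-equivariance of the evaluation-at-$eT$ identification and that the representation and operator transport correctly. I expect step (iii)--(iv) to be the main obstacle: one must verify carefully that the internal tensor product $E\otimes_{C(G/T)\otimes B} B$ (with $B$ a $B\otimes C(G/T)$-module via $ev$) is $T$-equivariantly and unitarily the fibre $E_{eT}\cong E$, compatibly with how $F$ and $\phi$ act, and that no correction terms (e.g. from the $G$-action twisting the fibre) appear because $eT$ is fixed by $T$. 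Everything else is formal bookkeeping with the definitions of pushout and external product recalled in the excerpt.
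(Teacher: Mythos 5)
Your proposal is correct and follows essentially the same route as the paper: compute the external product $x\otimes_{\mathbb C}[i^\ast]$ as $[E\otimes C(G/T),\phi\otimes id,F\otimes id]$, apply $\theta=ev_\ast\circ res^G_T$, and identify $(E\otimes C(G/T))\otimes_{ev}B$ with $E$ as a $T$-Hilbert $B$-module (the paper simply asserts this isomorphism and verifies compatibility of $\phi$ and $F$ under it, exactly your steps (iii)--(iv)). Your added observation that $eT$ being $T$-fixed is what makes the evaluation map $T$-equivariant is a point the paper leaves implicit, and is a useful clarification.
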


\begin{proof}
It can be done by routine checking. Let $x=[E,\phi,F]\in KK_G(A,B)$, then \[x\otimes_\mathbb{C}[i^\ast]=[E\otimes C(G/T),\phi\otimes id,F\otimes id]\]where $E\otimes C(G/T)$ is the same as the external tensor product of two $G$-Hilbert modules and hence is a $G$-Hilbert $B\otimes C(G/T)$-module. \[\theta(x\otimes_\mathbb{C}[i^\ast])=ev_\ast\circ res^G_T(x\otimes_\mathbb{C}[i^\ast])=[(E\otimes C(G/T))\otimes_{ev}B,\phi\otimes id_\mathbb{C}\otimes id_B,F\otimes id\otimes id_B]\]where $(E\otimes C(G/T))\otimes_{ev} B$ is a $T$-Hilbert $B$-module. It is clear that $(E\otimes C(G/T))\otimes_{ev}B$ is isomorphic to $E$ as a $T$-Hilbert $B$-module. Let $f$ be the isomorphism from $(E\otimes C(G/T))\otimes_{ev}B$ to $E$. Then it is straightforward to check that \[f\circ (\phi\otimes id \otimes id_B)(a)=\phi(a)\circ f\]and \[f\circ (F\otimes id \otimes id_B)=F\circ f\]for any $a\in A$, $\phi$ is viewed as a $T$-equivariant map and $F$ is viewed as a $T$-Hilbert $B$-module map by restricting the $G$-action to the $T$-action. Hence, $\theta(x\otimes_\mathbb{C}[i^\ast])$ and $i^\ast(x)$ are unitarily equivalent in $\mathbb{E}_T(A,B)$ and our result follows.
\end{proof}

\subsection{Construction of $[i_!]\in KK_G(C(G/T),\mathbbm{C})$.}\label{section2.2}

$G/T$ is equipped with a $G$-equivariant complex structure corresponding to a choice of a positive root system relative to $(G/T)$, see Section 13 in \cite{Borel}. We can construct an \emph{equivariant Dolbeault element} $KK_G(C(G/T),\mathbb{C})$ as follows.

The $G$-action on $C(G/T)$ is defined by \[g.f(x)=f(g^{-1}x)\]for any $g\in G$, $x\in G/T$ and $f\in C(G/T)$. The $G$-action on any smooth $(0,\ast)$-form is defined by \[g.s(x)=g(s(g^{-1}x))\]where $g\in G$, $x\in G/T$ and $s$ is a smooth section of the vector bundle $\Omega^{(0,\ast)}$ of complex differential forms of degree $(0,\ast)$ over $G/T$. This action extends to an action on the Hilbert space $\mathbb{H}=L^2(G/T,\Omega^{(0,\ast)})$ by continuity. $\mathbb{H}$ is a Hilbert space graded by decomposing the forms into even and odd forms. Then let $D'=\partial+\bar{\partial}^\ast$ be the $G$-equivariant Dolbeault operator acting on smooth forms of $G/T$. It is an essentially self-adjoint operator of degree 1 (see \cite{HigsonRoe}). Note that it is an unbounded operator. Let $f$ be the real-valued function defined by $f(x)=x/ \sqrt{1+x^2}$. By functional calculus, define $F=f(D')$. $F$ is now a bounded operator acting on the smooth forms with compact supports. Extend such an action to $\mathbb{H}$ by continuity. By abuse of notation, this operator is denoted by $F$. Let $m$ be the function multiplication of $C_0(G/T)$ on $\mathbb{H}$. Then $[\mathbb{H},m,F]\in KK(C_0(G/T),\mathbbm{C})$. We call it the \emph{equivariant Dolbeault element} of $G/T$, denoted by $[\bar{\partial}_{G/T}]$. We define $[i_!]$ to be $[\bar{\partial}_{G/T}]$.

\subsection{The Kasparov Product $[i^\ast]\otimes_{C(G/T)}[i_!]\in KK_G(\mathbbm{C},\mathbbm{C})$}\label{section2.3}

Following the definition of Kasparov product, we can get the following:\[[i^\ast]\otimes_{C(G/T)}[i_!]=[C(G/T)\otimes_m L^2(G/T,\Omega^{(0,\ast)}),i,1\otimes F]\]where $C(G/T)\otimes_m L^2(G/T,\Omega^{(0,\ast)})$, as an internal tensor product of two Hilbert modules, is viewed as a $G$-Hilbert space. $G$ acts on it by\[g.(f\otimes_m h)=(g.f)\otimes_m (g.h)\]where $g\in G$, $f\in C(G/T)$ and $h\in C^\infty(G/T,\Omega^{(0,\ast)})$. We can extend this action to an action on $C(G/T)\otimes_m L^2(G/T,\Omega^{(0,\ast)})$ by continuity. $i$ is the scalar multiplication on $C(G/T)\otimes_m L^2(G/T,\Omega^{(0,\ast)})$.

In general, the Kasparov product is hard to compute. But in our particular case, Kasparov \cite{Kasparov2} showed the following result:

\begin{theorem} [Kasparov] \label{kasparovtheorem}
Let $G$ be a compact group and $M$ be a compact $G$-manifold. Let $[E]\in K_G^0(M)$ be an element in the equivariant $K$-theory of $M$ and let $[\bar{\partial}_M]\in KK_G(C(M),\mathbb{C})\cong K_0^G(M)$ be the equivariant Dolbeault element. Then \[[E]\otimes_{C(M)}[\bar{\partial}_M]=\mbox{G-index}((\bar{\partial}_M)_E)\]where $(\bar{\partial}_M)_E$ is the Dolbeault operator with coefficients in $E$.
\end{theorem}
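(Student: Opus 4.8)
The plan is to follow Kasparov's original argument (\cite{Kasparov2}, §5) identifying the Kasparov product with the analytic index of a twisted Dolbeault complex, and then to observe that this analytic index computes the $G$-index by the equivariant index theorem. First I would recall that, under the isomorphism $KK_G(C(M),\mathbb{C})\cong K_0^G(M)$, the equivariant Dolbeault element $[\bar\partial_M]$ corresponds to the class of the Dolbeault complex $(\Omega^{(0,\ast)}(M),\bar\partial+\bar\partial^\ast)$, and an element $[E]\in K_G^0(M)$ is represented (after stabilizing and using the Serre--Swan theorem equivariantly) by a $G$-vector bundle on $M$, which as a Kasparov module is $[\Gamma(E),m_E,0]\in KK_G(\mathbb{C},C(M))$ with $\Gamma(E)$ the $C(M)$-module of continuous sections and $m_E$ scalar multiplication. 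The external-to-internal tensor-product formula for the Kasparov product then gives, as in Section \ref{section2.3}, \[[E]\otimes_{C(M)}[\bar\partial_M]=\bigl[\Gamma(E)\otimes_{m_E}L^2(M,\Omega^{(0,\ast)}),\,\iota,\,1\otimes F\bigr],\] where $F=f(D')$ with $D'=\bar\partial+\bar\partial^\ast$ and $f(x)=x/\sqrt{1+x^2}$, and $\iota$ is scalar multiplication.

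Next I would identify the internal tensor product $\Gamma(E)\otimes_{m_E}L^2(M,\Omega^{(0,\ast)})$ with the Hilbert space $L^2(M,\Omega^{(0,\ast)}\otimes E)$ of $E$-valued $(0,\ast)$-forms, equivariantly and compatibly with the gradings. Under this identification the operator $1\otimes F$ must be replaced (up to operator homotopy through Kasparov modules over $\mathbb{C}$, i.e. up to a compact-operator perturbation) by $f\bigl((\bar\partial_M)_E\bigr)$, where $(\bar\partial_M)_E=\bar\partial_E+\bar\partial_E^\ast$ is the Dolbeault operator with coefficients in $E$, built using a choice of $G$-invariant Hermitian connection on $E$. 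Here one uses that the difference between $1\otimes F$ and $f((\bar\partial_M)_E)$ is given by zeroth-order (hence compact, since $M$ is compact) terms, coming from the covariant derivative of the connection, so that the two define the same class in $KK_G(\mathbb{C},\mathbb{C})=R(G)$. This step is exactly where the connection enters and where Kasparov's construction of the product of a vector-bundle class with a Dolbeault class becomes concrete; I expect this to be the main obstacle, since it requires the standard but nontrivial verification that a connection Laplacian perturbation is relatively compact and lies in the same homotopy class, together with care about $G$-equivariance of all the choices.

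Finally, having written $[E]\otimes_{C(M)}[\bar\partial_M]=\bigl[L^2(M,\Omega^{(0,\ast)}\otimes E),\,\iota,\,f((\bar\partial_M)_E)\bigr]\in KK_G(\mathbb{C},\mathbb{C})$, I would invoke the standard identification of such a degree-$1$ Fredholm Kasparov module over $\mathbb{C}$ with the formal difference of its kernel and cokernel as virtual $G$-representations, i.e. with $[\ker(\bar\partial_E+\bar\partial_E^\ast)^+]-[\ker(\bar\partial_E+\bar\partial_E^\ast)^-]\in R(G)$. By definition this virtual representation is precisely $\mbox{G-index}((\bar\partial_M)_E)$, and since $(\bar\partial_M)_E$ is elliptic and $M$ compact, the kernels are finite-dimensional $G$-modules so the expression makes sense. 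This yields the claimed identity $[E]\otimes_{C(M)}[\bar\partial_M]=\mbox{G-index}((\bar\partial_M)_E)$.
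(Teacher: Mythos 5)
The paper does not actually prove this theorem: it is stated as a cited result of Kasparov (\cite{Kasparov2}) and used as a black box to deduce $[i^\ast]\otimes_{C(G/T)}[i_!]=1$ in Section~\ref{section2.3}. There is therefore no in-paper proof to compare against; what follows is an assessment of your sketch on its own terms.

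Your outline is the right one and is essentially Kasparov's: represent $[E]$ by the degenerate module $[\Gamma(E),m_E,0]$, identify the internal tensor product $\Gamma(E)\otimes_{m_E}L^2(M,\Omega^{(0,\ast)})$ with $L^2(M,\Omega^{(0,\ast)}\otimes E)$, show that the bounded transform of the twisted Dolbeault operator represents the Kasparov product, and read off the virtual $G$-representation $[\ker^+]-[\ker^-]$ as the $G$-index. The one place you should sharpen the argument is the middle step. For a nontrivial bundle $E$ the expression ``$1\otimes F$'' on $\Gamma(E)\otimes_{m_E}\mathbb{H}$ is not a well-defined bounded operator, because $F$ does not commute with the module action $m_E$ (only modulo compacts); the formula you quote from Section~\ref{section2.3} works there because $[i^\ast]$ is the rank-one trivial module, so $1\otimes F$ really is just $F$. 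In general, Kasparov's product produces the class of an arbitrary $F$-\emph{connection} satisfying the positivity condition, and the actual content of the argument is to verify that $f\bigl((\bar\partial_M)_E\bigr)$, built from a $G$-invariant Hermitian connection on $E$, \emph{is} such an $F$-connection --- using that $(\bar\partial_M)_E$ and $\bar\partial_M$ have the same principal symbol and that lower-order differences on a compact manifold yield compact operators after the bounded transform. Phrasing it as ``$1\otimes F$ is replaced by a compact perturbation $f((\bar\partial_M)_E)$'' conflates the well-definedness of $1\otimes F$ with the $F$-connection condition; with that reformulation the sketch is sound and matches Kasparov's proof.
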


Topologically, the element $[i^\ast]\in KK_G(\mathbb{C},C(G/T))\cong K_G^0(C(G/T))$ corresponds to the trivial $G$-bundle $E_0$ over $G/T$. The homogeneous pseudo-differential operator $D_{E_0}$ has $G$-index $1_G\in R(G)$ by a result of Bott, see \cite{Bott}. By Theorem \ref{kasparovtheorem}, we have the following result:

\begin{theorem}
$[i^\ast]\otimes_{C(G/T)}[i_!]=1\in KK_G(\mathbb{C},\mathbb{C}).$
\end{theorem}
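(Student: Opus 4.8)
The plan is to deduce the identity directly from Kasparov's index computation (Theorem~\ref{kasparovtheorem}), so the only real work is to recognise $[i^\ast]$ as an equivariant $K$-theory class and then to evaluate the resulting index. First I would make the isomorphism $KK_G(\mathbb{C},C(G/T))\cong K_G^0(G/T)$ explicit and check that under it the class $[i^\ast]=[C(G/T),id_\mathbb{C},0]$ — the free rank-one Hilbert module with the zero operator, $G$ acting on $C(G/T)$ by left translation — corresponds to the class $[E_0]$ of the trivial equivariant line bundle $E_0=(G/T)\times\mathbb{C}$ with $G$ acting diagonally and trivially on the fibre. This is a routine unwinding of definitions: a finitely generated projective Hilbert module together with the zero operator represents the class of the associated equivariant vector bundle, and $C(G/T)$ with the left-translation action is precisely the section module of $E_0$.

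With $[i^\ast]=[E_0]$ in hand, I would apply Theorem~\ref{kasparovtheorem} with $M=G/T$ (a compact $G$-manifold, so $C_0(G/T)=C(G/T)$) and $[E]=[E_0]$, recalling that $[i_!]$ is by definition the equivariant Dolbeault element $[\bar{\partial}_{G/T}]$ of Section~\ref{section2.2}, constructed via the bounded transform $F=f(D')$, which is exactly the element appearing in that theorem. This yields
\[
[i^\ast]\otimes_{C(G/T)}[i_!]=[E_0]\otimes_{C(G/T)}[\bar{\partial}_{G/T}]=\mbox{G-index}\big((\bar{\partial}_{G/T})_{E_0}\big).
\]
Since $E_0$ is trivial, $(\bar{\partial}_{G/T})_{E_0}$ is simply $\bar{\partial}_{G/T}$ acting on $L^2(G/T,\Omega^{(0,\ast)})$, and by Hodge theory its $G$-index equals the alternating sum $\sum_q(-1)^q[H^q(G/T,\mathcal{O})]\in R(G)$ of the Dolbeault cohomology groups of the structure sheaf with their natural $G$-representations. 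A theorem of Bott \cite{Bott} — the trivial-weight case of Borel–Weil–Bott, equivalently the vanishing of the higher cohomology of $\mathcal{O}_{G/T}$ on the flag manifold — gives $H^0(G/T,\mathcal{O})=\mathbb{C}$ with the trivial $G$-action and $H^q(G/T,\mathcal{O})=0$ for $q>0$, so $\mbox{G-index}\big((\bar{\partial}_{G/T})_{E_0}\big)=1_G$. Under the canonical ring isomorphism $KK_G(\mathbb{C},\mathbb{C})\cong R(G)$ the class $1_G$ is the unit, and therefore $[i^\ast]\otimes_{C(G/T)}[i_!]=1$.

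The main obstacle here is not conceptual but a matter of matching normalisations across the three inputs: confirming that the abstract Kasparov module $[C(G/T),id_\mathbb{C},0]$ corresponds to $[E_0]$ under the chosen identification $KK_G(\mathbb{C},C(G/T))\cong K_G^0(G/T)$; that the operator-theoretic Dolbeault class $[i_!]$ of Section~\ref{section2.2} is literally the $[\bar{\partial}_M]$ of Theorem~\ref{kasparovtheorem} (in particular that the choices of invariant complex structure and of the function $f$ are immaterial at the level of the $KK$-class); and that the isomorphism $KK_G(\mathbb{C},\mathbb{C})\cong R(G)$ carries a $G$-index equal to $1_G$ to the unit $1\in KK_G(\mathbb{C},\mathbb{C})$. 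Once these identifications are pinned down, the theorem is an immediate consequence of Kasparov's formula together with Bott's vanishing result.
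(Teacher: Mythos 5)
Your proposal is correct and follows essentially the same route as the paper: identify $[i^\ast]=[C(G/T),id_\mathbb{C},0]$ with the class of the trivial equivariant bundle $E_0$ under $KK_G(\mathbb{C},C(G/T))\cong K_G^0(G/T)$, apply Kasparov's Theorem~\ref{kasparovtheorem} to reduce the product to the $G$-index of the Dolbeault operator twisted by $E_0$, and invoke Bott's computation that this index is $1_G\in R(G)$. The only difference is that you unpack the citation to Bott via the Dolbeault cohomology of the structure sheaf on $G/T$, whereas the paper cites the index result directly; the argument is the same.
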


\subsection{Push-pull operators}\label{section2.4}

Recall the notations from Section \ref{section2.1} that $\theta\colon KK_G(A,B\otimes C(G/T))\rightarrow KK_T(A,B)$ denotes the isomorphism by Wasserman's Theorem. Then let $\theta^{-1}\colon KK_T(A,B)\rightarrow KK_G(A,B\otimes C(G/T))$ be the inverse of $\theta$. Define $i_! \colon KK_T(A, B)\rightarrow KK_G(A,B)$ by \[i_!(y)=\theta^{-1}(y)\otimes_{C(G/T)} [i_!]\]for any $y\in KK_T(A,B)$. 

\begin{lemma}
$i_!\circ i^\ast=1$ as an action on $KK_G(A,B)$.
\end{lemma}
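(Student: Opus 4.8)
The plan is to unwind the definition of $i_!$ and reduce the claim to the already-established identity $[i^\ast]\otimes_{C(G/T)}[i_!]=1$ in $KK_G(\mathbb{C},\mathbb{C})$, together with Lemma \ref{mainlemma} and the associativity of the Kasparov product. First I would fix $x\in KK_G(A,B)$ and compute $i_!(i^\ast(x)) = \theta^{-1}(i^\ast(x))\otimes_{C(G/T)}[i_!]$. By Lemma \ref{mainlemma} we know $\theta(x\otimes_{\mathbb{C}}[i^\ast]) = i^\ast(x)$, and since $\theta$ is the isomorphism of Wasserman's Theorem \ref{Wasserman1}, applying $\theta^{-1}$ gives $\theta^{-1}(i^\ast(x)) = x\otimes_{\mathbb{C}}[i^\ast]$. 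Here I should be slightly careful that $x\otimes_{\mathbb{C}}[i^\ast]$, a priori an element of $KK_G(A,B\otimes C(G/T))$ formed from $x\in KK_G(A,B)$ and $[i^\ast]\in KK_G(\mathbb{C},C(G/T))$, is exactly the class whose image under $\theta$ Lemma \ref{mainlemma} computes — this is just the external product over $\mathbb{C}$, so it lands in the right group.

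Next I would substitute this into the definition of $i_!$ to get
\[
i_!(i^\ast(x)) = \bigl(x\otimes_{\mathbb{C}}[i^\ast]\bigr)\otimes_{C(G/T)}[i_!].
\]
The key move is then associativity of the Kasparov product: $x\otimes_{\mathbb{C}}[i^\ast]$ should be regrouped so that $[i^\ast]$ and $[i_!]$ are multiplied first over $C(G/T)$. Concretely, $x\otimes_{\mathbb{C}}[i^\ast] = x\otimes_{\mathbb{C}}[i^\ast]$ where $x\in KK_G(A,B)$ and $[i^\ast]\in KK_G(\mathbb{C},C(G/T))$, and the product over $C(G/T)$ with $[i_!]\in KK_G(C(G/T),\mathbb{C})$ rebrackets (by associativity/functoriality of the external and internal products, as in \cite{JensenThomsen} or \cite{Kasparov1}) to $x\otimes_{\mathbb{C}}\bigl([i^\ast]\otimes_{C(G/T)}[i_!]\bigr)$. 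Then invoking the theorem $[i^\ast]\otimes_{C(G/T)}[i_!]=1\in KK_G(\mathbb{C},\mathbb{C})$ and the fact that $1\in KK_G(\mathbb{C},\mathbb{C})$ acts as the identity under external product, we conclude $i_!(i^\ast(x)) = x\otimes_{\mathbb{C}}1 = x$.

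I expect the main obstacle to be the bookkeeping in the associativity/rebracketing step: one must check that the external product over $\mathbb{C}$ and the internal product over $C(G/T)$ interact compatibly (this is a standard but notationally delicate compatibility of Kasparov products, sometimes phrased via $\sigma$-maps $\sigma_D$ and the general associativity theorem), and that all the module actions — in particular the $G$-action on $C(G/T)\otimes_m L^2(G/T,\Omega^{(0,\ast)})$ described in Section \ref{section2.3} — are carried along correctly through $\theta^{-1}$. Once the identification $\theta^{-1}(i^\ast(x)) = x\otimes_{\mathbb{C}}[i^\ast]$ is in hand via Lemma \ref{mainlemma}, the rest is a formal consequence of the properties of the Kasparov product and the computation of Section \ref{section2.3}, so I would present it compactly and refer to the standard references for the associativity and unitality statements rather than reproving them.
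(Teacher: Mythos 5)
Your proposal is correct and follows essentially the same route as the paper: invoke Lemma \ref{mainlemma} to identify $\theta^{-1}(i^\ast(x))$ with $x\otimes_{\mathbb{C}}[i^\ast]$, then use associativity of the Kasparov product and the identity $[i^\ast]\otimes_{C(G/T)}[i_!]=1$ from Section \ref{section2.3}. The only difference is cosmetic — you unwind the definition of $i_!$ before applying Lemma \ref{mainlemma}, whereas the paper applies the lemma first — and your extra remarks about the compatibility of external and internal products are a reasonable acknowledgement of the standard bookkeeping the paper leaves implicit.
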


\begin{proof}
By Lemma \ref{mainlemma} and by associativity of Kasparov product,
\begin{eqnarray}
i_!(i^\ast(x))&=&i_!(\theta(x\otimes_{\mathbb{C}}[i^\ast]))\nonumber\\
  &=& (x\otimes_{\mathbb{C}}[i^\ast])\otimes_{C(G/T)}[i_!]\nonumber\\
  &=& x\otimes_{\mathbb{C}}([i^\ast]\otimes_{C(G/T)}[i_!])\nonumber\\
  &=& x\otimes_{\mathbb{C}}1 \nonumber\\
  &=& x \nonumber
\end{eqnarray}for all $x\in KK_G(A,B)$ as desired.
\end{proof}

Define $\sigma\colon KK_T(A,B)\longrightarrow KK_T(A,B)$ by \[\sigma=i^\ast \circ i_!.\]Some properties of $\sigma$ can be stated immediately.

\begin{lemma} \label{rho}
$\sigma^2=\sigma$ and $\sigma(i^\ast(x))=i^\ast(x)$ for any $x\in KK_G(A,B)$. 
\end{lemma}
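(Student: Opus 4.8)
The plan is to derive both statements as purely formal consequences of the identity $i_! \circ i^\ast = 1$ established in the preceding lemma; no new analytic input is required. All three maps $i^\ast$, $i_!$ and hence $\sigma = i^\ast \circ i_!$ are homomorphisms of the abelian $KK$-groups, so compositions and cancellations below are legitimate.

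First I would verify idempotency directly. Since $\sigma = i^\ast \circ i_!$,
\[
\sigma^2 = (i^\ast \circ i_!) \circ (i^\ast \circ i_!) = i^\ast \circ (i_! \circ i^\ast) \circ i_! = i^\ast \circ 1 \circ i_! = i^\ast \circ i_! = \sigma,
\]
where the middle equality uses associativity of composition and the relation $i_! \circ i^\ast = 1$ on $KK_G(A,B)$. Next, for the second assertion, given $x \in KK_G(A,B)$ I would compute
\[
\sigma(i^\ast(x)) = i^\ast\bigl(i_!(i^\ast(x))\bigr) = i^\ast\bigl((i_! \circ i^\ast)(x)\bigr) = i^\ast(x),
\]
again invoking $i_! \circ i^\ast = 1$. (Note that the first identity is in fact a formal consequence of the second together with surjectivity considerations, but it is cleaner to record both computations.)

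There is no genuine obstacle in this lemma: it is the standard observation that a composite of the form (retraction)$\circ$(section) is idempotent and restricts to the identity on the image of the section. The entire mathematical weight sits in the previously proved relation $i_! \circ i^\ast = 1$, which itself rested on Wasserman's Theorem~\ref{Wasserman1}, the construction of the equivariant Dolbeault element $[i_!] = [\bar{\partial}_{G/T}]$, Kasparov's Theorem~\ref{kasparovtheorem}, and Bott's computation that the relevant $G$-index equals $1_G$. The only point deserving even a line of care is that $i^\ast$ and $i_!$ are honest group homomorphisms of $KK$-groups, which is immediate from their definitions via restriction and the Kasparov product.
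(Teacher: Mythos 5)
Your proof is correct and takes essentially the same approach as the paper: both rest on the identity $i_!\circ i^\ast = 1$ on $KK_G(A,B)$ (equivalently, $[i^\ast]\otimes_{C(G/T)}[i_!]=1$ from Section~\ref{section2.3}) combined with associativity. The only difference is one of packaging — the paper re-derives the result at the level of the Kasparov elements $[i_!]\otimes[i^\ast]$, whereas you invoke the already-established map-level identity from the preceding lemma and conclude formally; the underlying argument is the same.
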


\begin{proof}
By Section \ref{section2.3} and associativity of Kasparov product, \[([i_!]\otimes[i^\ast])\otimes([i_!]\otimes[i^\ast])=[i_!]\otimes([i^\ast]\otimes[i_!])\otimes[i^\ast]=
[i_!]\otimes[i^\ast].\]Now it is obvious that $\sigma^2=\sigma$ and $\sigma(i^\ast(x))=i^\ast(x)$ for any $x\in KK_G(A,B)$. 
\end{proof}

Let $R(T)$ be the character ring of the maximal torus $T$. The \emph{isobaric divided difference operators} $\delta_\alpha$ on $R(T)$ were introduced by Demazure \cite{Demazure}. The precise definitions were as follows. Let $\mathscr{R}$ be the root system of $(G,T)$. Let $s_\alpha\in W$ be the reflection element in the root $\alpha$. Let $\mathscr{X}(T)=\mathrm{Hom}(T,U(1))$ be the character group of $T$. We denote by $e^\lambda$ the element of $R(T)$ defined by a character $\lambda\in\mathscr{X}(T)$. The element $e^\lambda-e^{-\alpha}e^{s_\alpha(\lambda)}$ is divisible by $1-e^{-\alpha}$, then we can define a $\mathbb{Z}$-linear endomorphism $\delta_\alpha$ of $R(T)$ by \begin{eqnarray}
\delta_\alpha(u)=\frac{u-e^{-\alpha}s_\alpha(u)}{1-e^{-\alpha}} \label{firstequation}
\end{eqnarray}for all $u\in R(T)$. It has the following important property: \[\delta_\alpha^2=\delta_\alpha\]and \[\delta_\alpha(1)=1.\]Alternatively, in a series of earlier papers \cite{Demazure1}, \cite{Demazure2}, Demazure defined the operators \begin{eqnarray}\delta_\alpha '(u)=\frac{u-s_\alpha(u)}{1-e^{-\alpha}}.\end{eqnarray}It is easy to see that \[(\delta_\alpha ')^2=\delta_\alpha '\]and \[\delta_\alpha '(1)=0.\]For any $\omega\in W$ and any reduced expression $\omega=s_{\beta_1}s_{\beta_2}...s_{\beta_l}$ in terms of simple reflections, the composition $\delta_{\beta_1}\delta_{\beta_2}...\delta_{\beta_l}$ takes the same value $\partial_\omega$. Similarly, the composition $\delta_{\beta_1} ' \delta_{\beta_2} '...\delta_{\beta_l} '$ takes the same value $\partial_\omega '=e^{-\rho}\partial_\omega e^{-\rho}$, see \cite{Demazure}. For the longest element $\omega_0$, we have the \emph{top Demazure's operator} $\partial_{\omega_0}$.

$\partial_{\omega_0}$ is intimately related to the Weyl character Formula. We fix a basis of the root system and let \[\rho=\frac{1}{2}\sum_{\alpha\in\mathscr{R}^+}\alpha\]be the half-sum of all positive roots. The the Weyl character formula can be interpreted as the following formula: \begin{eqnarray} 
\mathrm{ch}(u)=\frac{\mathrm{A}(u)}{\mathrm{d}} \label{weylcharacter}\end{eqnarray}for all $u\in R(T)$. $\mathrm{A}(u)$ is the following alternating sums of elements in $R(T)$: \[\mathrm{A}(u)=\sum_{\omega\in W}(-1)^{l(\omega)}e^{-\rho}\omega(e^{\rho}u)\]where $l(w)$ is the length of the Weyl element $\omega$. $\mathrm{d}$ is defined as follows: \[\mathrm{d}=\prod_{\alpha\in\mathscr{R}^+}(1-e^{-\alpha}).\]In \cite{Demazure}, Demazure showed the following formula:\begin{eqnarray}\partial_{\omega_0}(u)=\frac{\mathrm{A}(u)}{\mathrm{d}}\label{Demazureformula}\end{eqnarray}for all $u\in R(T)$. 

\vspace{2mm}

If $A=\mathbb{C}$, $B=\mathbb{C}$, then $KK_T(\mathbb{C},\mathbb{C})\cong R(T)$ and $KK_G(\mathbb{C},\mathbb{C})\cong R(G)$. By our definition of $\sigma$ and the work of \cite{AtiyahBott} and \cite{Demazure}, $\sigma$ is exactly the same as $\partial_{\omega_0}$ acting on $R(T)$. In other words, the operator $\sigma\colon KK_T(A,B)\longrightarrow KK_T(A,B)$ can be interpreted as a generalization of both the Weyl character formula and the top Demazure's operator to Kasparov's $KK$-theory. 

\vspace{3mm}

We call a compact Lie group $G$ a \emph{Hodgkin group} if it is connected and has a torsion-free fundamental group. In \cite{Hodgkin}, Hodgkin proved the following result in equivariant $K$-theory: \[K_T^\ast(M)\cong R(T)\otimes_{R(G)}K_G^\ast(M)\] where $G$ is a Hodgkin group, $T$ is a maximal torus of $G$ and $M$ is any $G$-space which is locally contractible and of a finite covering dimension. Note that it is an isomorphism of $R(T)$-modules. The following generalization of the Hodgkin's result to $KK$-theory was due to A. Wasserman \cite{Wasserman}. See Section \ref{Rosenbergsection} for a proof of it.

\begin{theorem} [Wasserman]\label{Rosenberg_result}
Let $G$ be a Hodgkin group and $T$ be a maximal torus in $G$. For all $G$-$\mathrm{C}^\ast$-algebras $A$ and $B$, \[KK_T(A,B)\cong KK_G(A,B)\otimes_{R(G)}R(T).\]They are isomorphic as $R(T)$-modules. The map $KK_G(A,B)\otimes_{R(G)}R(T)\rightarrow KK_T(A,B)$ is given by $x\otimes a\mapsto a.i^\ast(x)$ where $i\colon T\rightarrow G$ is the inclusion map.
\end{theorem}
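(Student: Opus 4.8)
The plan is to reduce the statement to a known universal-coefficient–type computation in the $KK$-category by exploiting the module structure over $R(G)$ and $R(T)$ together with the splitting established in Sections \ref{section2.1}–\ref{section2.4}. First I would record the structural facts already in hand: $KK_G(A,B)$ and $KK_T(A,B)$ are modules over $R(G)\cong KK_G(\mathbbm{C},\mathbbm{C})$ and $R(T)\cong KK_T(\mathbbm{C},\mathbbm{C})$ respectively via the exterior Kasparov product, the restriction map $i^\ast$ is $R(G)$-linear (where $R(T)$ is an $R(G)$-module through the restriction $R(G)\to R(T)$), and by the previous subsections $i_!\circ i^\ast=1$, so $i^\ast$ is a split injection of $R(G)$-modules. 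Consequently the induced map $\Phi\colon KK_G(A,B)\otimes_{R(G)}R(T)\to KK_T(A,B)$, $x\otimes a\mapsto a\cdot i^\ast(x)$, is a well-defined homomorphism of $R(T)$-modules; the content of the theorem is that $\Phi$ is an isomorphism.

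Next I would set up the proof that $\Phi$ is an isomorphism by a devissage on $A$. The Hodgkin hypothesis enters exactly as in Hodgkin's and the Rosenberg–Schochet spectral-sequence arguments: when $G$ is a Hodgkin group, $R(G)$ has finite homological dimension and the Künneth/universal-coefficient machinery for $G$-equivariant $KK$-theory is available. The key step is to show that the functor $A\mapsto \Phi_{A,B}$ is an isomorphism for $A=\mathbbm{C}$ — that is, that $KK_G(\mathbbm{C},B)\otimes_{R(G)}R(T)\xrightarrow{\ \sim\ }KK_T(\mathbbm{C},B)$ — and then to propagate this along the triangulated structure of $KK_G$: both sides are homological (resp. cohomological) functors in $A$ sending distinguished triangles to long exact sequences (tensoring with $R(T)$ over $R(G)$ preserves exactness in the relevant range because $R(T)$ is, after inverting nothing, flat enough over $R(G)$ for a Hodgkin group — more precisely one uses that $\mathrm{Tor}^{R(G)}_{>0}(R(T), M)$ vanishes for the modules that occur, or absorbs the higher Tor into a spectral sequence that degenerates). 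Since $i^\ast$ is split, there is no $\lim^1$ obstruction and the comparison of long exact sequences plus the five lemma upgrades the case $A=\mathbbm{C}$ to all $A$ in the bootstrap class, and then a further limit argument (again using the splitting to kill derived-limit terms) reaches general $A$.

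For the base case $A=\mathbbm{C}$ I would argue as follows: by Wasserman's Theorem \ref{Wasserman1}, $KK_T(\mathbbm{C},B)\cong KK_G(\mathbbm{C},B\otimes C(G/T))$, so it suffices to exhibit a natural $R(T)$-isomorphism $KK_G(\mathbbm{C},B)\otimes_{R(G)}R(T)\cong KK_G(\mathbbm{C},B\otimes C(G/T))$ compatible with $\Phi$. This in turn follows from a Künneth-type statement: $R(T)\cong R(G)\otimes_{R(G)}R(T)$ is the $R(G)$-module $K^0_G(G/T)$, and $C(G/T)$-coefficients compute precisely this tensor factor — when $G$ is Hodgkin the relevant $\mathrm{Tor}$-terms vanish because $K^\ast_G(G/T)=R(T)$ is concentrated in degree zero and is a finitely generated $R(G)$-module of finite projective dimension, so the Künneth spectral sequence for $KK_G(\mathbbm{C},B\otimes C(G/T))$ collapses to $KK_G(\mathbbm{C},B)\otimes_{R(G)}K^0_G(G/T)$. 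Identifying this collapse map with $\Phi$ (via the formula $x\otimes a\mapsto a\cdot i^\ast(x)$ and the explicit description of the isomorphism in Theorem \ref{Wasserman1}) completes the base case. I expect the main obstacle to be precisely this collapsing/flatness input — verifying that for a Hodgkin group the higher $\mathrm{Tor}^{R(G)}_\ast(R(T),-)$ contributions genuinely vanish on the modules at hand (equivalently, that $R(T)$ is flat over $R(G)$ in the Hodgkin case, which is the $KK$-theoretic shadow of Pittie's and Steinberg's theorem that $R(T)$ is a free $R(G)$-module), and then bookkeeping the naturality so that the abstract Künneth isomorphism is identified with the concrete map $x\otimes a\mapsto a\cdot i^\ast(x)$ rather than merely some isomorphism.
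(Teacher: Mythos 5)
Your proposal and the paper's proof diverge at the crucial step, and your route has a real gap. The paper's argument (following Rosenberg--Schochet and Wasserman) does \emph{not} proceed by devissage in the $A$-variable. Instead it produces, once and for all, an explicit ``dual basis'' for the $KK_G(A,B)$-module $KK_G(A,B\otimes C(G/T))$ that works simultaneously for every pair $(A,B)$: combining Kasparov's Poincar\'{e} duality $\delta\colon KK_G(C(G/T),\mathbbm{C})\xrightarrow{\sim} KK_G(\mathbbm{C},C(G/T))$, its twisted form $\delta_{C(G/T)}$, McLeod's K\"{u}nneth computation of $K_G^\ast(G/T\times G/T)\cong R(T)\otimes_{R(G)}R(T)$, and Steinberg's theorem that $R(T)$ is free over $R(G)$ with basis $(e_\omega)_{\omega\in W}$, one writes $\delta_{C(G/T)}(1_{C(G/T)})=\sum_\omega b_\omega\otimes_{\mathbbm{C}}e_\omega$ and sets $a_\omega=\delta^{-1}(b_\omega)$. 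Then $1_{C(G/T)}=\sum_\omega a_\omega\otimes_{\mathbbm{C}}e_\omega$ with the orthogonality $e_v\otimes_{C(G/T)}a_\omega=\delta_{v\omega}$, and for \emph{any} $y\in KK_G(A,B\otimes C(G/T))\cong KK_T(A,B)$ the identity $y=y\otimes_{C(G/T)}1_{C(G/T)}=\sum_\omega(y\otimes_{C(G/T)}a_\omega)\otimes_{\mathbbm{C}}e_\omega$ gives the required free decomposition, with uniqueness coming from the orthogonality. No induction on $A$ is ever performed.

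The gap in your approach is the propagation step. Proving the statement for $A=\mathbbm{C}$ and then bootstrapping along distinguished triangles plus a five-lemma argument only reaches $A$ in the localizing subcategory generated by $\mathbbm{C}$ (a bootstrap class). The theorem, however, is asserted for \emph{all} $G$-$\mathrm{C}^\ast$-algebras $A$ and $B$, with no bootstrap or nuclearity hypothesis, and there is no ``further limit argument using the splitting'' that recovers an arbitrary $A$ from $\mathbbm{C}$: an arbitrary $G$-$\mathrm{C}^\ast$-algebra simply is not built from $\mathbbm{C}$ by triangles and colimits in $KK_G$. The splitting $i_!\circ i^\ast=1$ that you invoke shows $i^\ast$ is injective and split, but it does not control the cokernel for a general $A$, which is exactly what the dual-basis construction supplies. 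Also, you flag the flatness of $R(T)$ over $R(G)$ as ``the main obstacle'' and hedge with a Tor-degeneration argument; in fact this is the unproblematic part (Steinberg gives outright freeness for Hodgkin groups), and the paper uses freeness directly rather than flatness plus a spectral sequence. To repair your proof you would need to replace the devissage on $A$ by the construction of the elements $a_\omega$, $e_\omega$ above (or something equivalent that works uniformly in $A$), at which point you are essentially reproducing the paper's argument.
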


The next result is crucial for the constructions of divided difference operators in Section \ref{section2.5}.

\begin{theorem} \label{hecke algebra}
Assume that $G$ is a Hodgkin group. Identify the $R(T)$-modules $KK_T(A,B)$ and $KK_G(A,B)\otimes_{R(G)}R(T)$ via Theorem \ref{Rosenberg_result}, then $\sigma=1\otimes\partial_{\omega_0}$, where $1$ denotes the identity operator of $KK_G(A,B)$ and $\partial_{\omega_0}$ is the top Demazure's operator.
\end{theorem}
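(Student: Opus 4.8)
The plan is to reduce the identity $\sigma = 1 \otimes \partial_{\omega_0}$ on $KK_T(A,B) \cong KK_G(A,B) \otimes_{R(G)} R(T)$ to the already-established case $A = B = \mathbb{C}$, using naturality of $\sigma$ with respect to the module structure. First I would observe that $\sigma = i^\ast \circ i_!$ is, by construction, built entirely from Kasparov products with the fixed classes $[i^\ast] \in KK_G(\mathbb{C}, C(G/T))$ and $[i_!] \in KK_G(C(G/T), \mathbb{C})$, together with the Wasserman isomorphism $\theta$; none of these depend on $A$ or $B$. Consequently $\sigma$ is an $R(T)$-module endomorphism of $KK_T(A,B)$: for $r \in R(T) \cong KK_T(\mathbb{C},\mathbb{C})$ and $y \in KK_T(A,B)$ one has $\sigma(r \cdot y) = r \cdot \sigma(y)$, because the $R(T)$-action is itself given by an (exterior) Kasparov product and $\sigma$ commutes with such products by associativity and functoriality of the product.

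Next I would use the explicit form of the isomorphism in Theorem \ref{Rosenberg_result}: a general element of $KK_T(A,B)$ is an $R(T)$-linear combination $\sum_j r_j \cdot i^\ast(x_j)$ with $x_j \in KK_G(A,B)$ and $r_j \in R(T)$. By $R(T)$-linearity of $\sigma$ and of $1 \otimes \partial_{\omega_0}$, it suffices to check the identity on elements of the form $r \cdot i^\ast(x)$. Here Lemma \ref{rho} gives $\sigma(i^\ast(x)) = i^\ast(x)$, so
\begin{equation}
\sigma(r \cdot i^\ast(x)) = \sigma\bigl(r \cdot i^\ast(x)\bigr) = \text{(apply }R(T)\text{-linearity in a form that isolates }\sigma\text{ on }r\text{)}. \nonumber
\end{equation}
More precisely, writing $r \cdot i^\ast(x) = i^\ast(x) \cdot r$ and viewing multiplication by $i^\ast(x)$ as an $R(G)$-linear (indeed $R(T)$-linear after restriction) operation that commutes with $\sigma$, one gets $\sigma(r \cdot i^\ast(x)) = i^\ast(x) \cdot \sigma(r) = i^\ast(x) \cdot \partial_{\omega_0}(r)$, using the identification $\sigma = \partial_{\omega_0}$ on $R(T) \cong KK_T(\mathbb{C},\mathbb{C})$ recorded in the text. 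Under the isomorphism of Theorem \ref{Rosenberg_result} the element $i^\ast(x) \cdot \partial_{\omega_0}(r)$ corresponds to $x \otimes \partial_{\omega_0}(r) = (1 \otimes \partial_{\omega_0})(x \otimes r)$, which is exactly what we want.

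The step I expect to be the main obstacle is justifying carefully that $\sigma$ is $R(T)$-linear and, in particular, that multiplication by $i^\ast(x)$ on $KK_T(A,B)$ commutes with $\sigma$ in the precise sense needed — i.e.\ that one can "slide" $\sigma$ past the $KK_G(A,B)$-factor. This requires unwinding the definitions of $i^\ast$, $i_!$, and $\theta$ as Kasparov products and invoking associativity and the bifunctoriality of the product with some care about which algebra the product is taken over (one must check that the product with $[i^\ast]$ and $[i_!]$, which happen over $\mathbb{C}$ and $C(G/T)$, genuinely commute with the exterior product encoding the $R(T)$-module structure and with the $R(G)$-balanced tensor in Theorem \ref{Rosenberg_result}). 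Once this naturality is in hand, the reduction to the scalar case $A=B=\mathbb{C}$ and the identification of $\sigma$ with $\partial_{\omega_0}$ there (already cited from \cite{AtiyahBott} and \cite{Demazure}) finish the proof.
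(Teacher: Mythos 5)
Your proposal contains a genuine error in the first paragraph: the claim that $\sigma$ is $R(T)$-linear, i.e.\ that $\sigma(r\cdot y)=r\cdot\sigma(y)$ for all $r\in R(T)$ and $y\in KK_T(A,B)$, is \emph{false}. To see this, take $A=B=\mathbb{C}$ and $y=1\in R(T)$. If $\sigma$ were $R(T)$-linear, then $\sigma(r)=r\cdot\sigma(1)=r$ for every $r\in R(T)$, forcing $\sigma$ to be the identity on $R(T)$; but $\sigma=\partial_{\omega_0}$ there, which is not the identity. Equivalently, $\sigma$ is an idempotent with image $i^\ast(KK_G(A,B))$ by Lemma \ref{rho}; $R(T)$-linearity would make this image an $R(T)$-submodule, and since $KK_T(A,B)=R(T)\cdot i^\ast(KK_G(A,B))$ by Theorem \ref{Rosenberg_result}, that would force $i^\ast$ to be surjective, which it is not in general. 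The flaw in your heuristic is that the $R(T)$-action is a Kasparov product with a merely $T$-equivariant class in $KK_T(\mathbb{C},\mathbb{C})$, and that does not commute with the $G$-equivariant products used to build $\sigma$; what is true (and what the paper uses) is that $\sigma$ is $R(G)$-linear. Your first paragraph also contradicts your second: if $\sigma(r\cdot y)=r\cdot\sigma(y)$ held, you would get $\sigma(r\cdot i^\ast(x))=r\cdot i^\ast(x)$, not $i^\ast(x)\cdot\partial_{\omega_0}(r)$.

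The rest of your argument can be salvaged with small corrections, and once repaired it follows essentially the same route as the paper. The reduction to elements of the form $r\cdot i^\ast(x)$ needs only additivity of $\sigma$ and of $1\otimes\partial_{\omega_0}$, not $R(T)$-linearity. The key identity you conjecture, $\sigma_{A,B}(i^\ast(x)\cdot r)=i^\ast(x)\cdot\partial_{\omega_0}(r)$, is indeed the heart of the matter; but it is not "$\sigma$ commutes with multiplication by $i^\ast(x)$" in any naive sense --- it is a naturality statement intertwining $\sigma_{\mathbb{C},\mathbb{C}}$ with $\sigma_{A,B}$ along the map $R(T)\to KK_T(A,B)$, $r\mapsto i^\ast(x)\cdot r$. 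This is exactly what the paper establishes by unwinding $\partial_{\omega_0}(a)$ as the Kasparov product $a\otimes_{C(G/T)}[i_!]\otimes_{\mathbb{C}}[i^\ast]$ and then invoking associativity, $x\otimes_{\mathbb{C}}\bigl(a\otimes_{C(G/T)}[i_!]\otimes_{\mathbb{C}}[i^\ast]\bigr)=(x\otimes_{\mathbb{C}}a)\otimes_{C(G/T)}[i_!]\otimes_{\mathbb{C}}[i^\ast]$, together with the $R(G)$-balancing relation (which itself is associativity of the product with a $G$-equivariant class). If you replace the false $R(T)$-linearity claim with the correct $R(G)$-linearity, appeal to additivity for the reduction, and justify the sliding step by associativity as just described, your proof becomes correct and is essentially a re-parametrisation of the paper's argument in $KK_T(A,B)$-coordinates rather than in $KK_G(A,B)\otimes_{R(G)}R(T)$-coordinates.
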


\begin{proof}
By the Wasserman's Isomorphism $\theta\colon KK_G(A,B\otimes C(G/T))\rightarrow KK_T(A,B)$ and Theorem \ref{Rosenberg_result}, we can identify $KK_G(A,B)\otimes_{R(G)}R(T)$ with $KK_G(A,B\otimes C(G/T))$. But $R(T)$ is isomorphic to $KK_G(\mathbb{C}, C(G/T))$. Hence we can consider $KK_G(A,B)\otimes_{R(G)}KK_G(\mathbb{C}, C(G/T))$ instead. Note that the relation $(xb)\otimes c=x\otimes (bc)\in KK_G(A,B)\otimes_{R(G)}KK_G(\mathbb{C}, C(G/T))$ where $x\in KK_G(A,B)$, $b\in R(G)$ and $c\in KK_G(\mathbb{C}, C(G/T))$ is equivalent to (after making identifications of $R(G)\cong KK_G(\mathbb{C},\mathbb{C})$) the associativity of Kasparov product $(x\otimes_\mathbb{C}b)\otimes_\mathbb{C}c=x\otimes_\mathbb{C}(b\otimes_\mathbb{C}c)$. Then this theorem is almost trivial. For any $x\otimes a\in KK_G(A,B)\otimes_{R(G)}R(T)$, the operator $1\otimes\partial_{\omega_0}$ acts on $KK_G(A,B)\otimes_{R(G)}KK_G(\mathbb{C},C(G/T))$ by \begin{eqnarray}
1\otimes\partial_{\omega_0}(x\otimes a)&=&x\otimes\partial_{\omega_0}a\nonumber\\
  &=&x\otimes(a\otimes_{C(G/T)}[i_!]\otimes_\mathbb{C}[i^\ast]).\nonumber
\end{eqnarray}  In terms of Kasparov product, $x\otimes_{\mathbb{C}}(a\otimes_{C(G/T)}[i_!]\otimes_\mathbb{C}[i^\ast])=(x\otimes_\mathbb{C}a)\otimes_{C(G/T)}[i_!]\otimes_\mathbb{C}[i^\ast]$. But then $(x\otimes_\mathbb{C}a)\otimes_{C(G/T)}[i_!]\otimes_\mathbb{C}[i^\ast]$ is essentially the same as $\sigma(a.i^\ast(x))$.
\end{proof}

The next result is analogous to a result by Snaith \cite{Snaith}.

\begin{lemma} \label{Lemma11}
Let $\tilde{T}$ be a torus and $s\colon\tilde{T}\rightarrow T$ a covering homomorphism. Then the map $s^\ast\colon KK_T(A,B)\rightarrow KK_{\tilde{T}}(A,B)$ is injective for all $T$-$\mathrm{C}^\ast$-algebras $A$ and $B$.
\end{lemma}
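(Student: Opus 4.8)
The plan is to produce an explicit left inverse to $s^\ast$ by means of a ``take $N$-fixed part'' construction, in the spirit of Snaith's transfer. Write $N=\ker s$, a finite (necessarily central) subgroup of $\tilde T$, so that $T\cong\tilde T/N$ and the map $s^\ast$ amounts to regarding a $T$-equivariant Kasparov module as a $\tilde T$-equivariant one on which $N$ acts trivially; here one uses that $A$ and $B$, being $T$-$\mathrm{C}^\ast$-algebras pulled back along $s$, carry the trivial $N$-action. Given $\mathcal E=(E,\phi,F)\in\mathbb E_{\tilde T}(A,B)$, the restricted $\tilde T$-action makes $N$ act on $E$ by a finite family $\{U_n\}_{n\in N}$ of adjointable unitaries (they are genuine unitaries precisely because $N$ fixes $B$), which preserve the grading and commute with $\phi(A)$. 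The first step is to replace $F$ by its $N$-average $F'=|N|^{-1}\sum_{n\in N}U_nFU_n^{-1}$. Since $(U_nFU_n^{-1}-F)\phi(a)\in\mathrm K(E)$ for every $a$, we have $(F-F')\phi(a)\in\mathrm K(E)$, so $(E,\phi,F)$ and $(E,\phi,F')$ represent the same class in $KK_{\tilde T}(A,B)$, and $F'$ still satisfies all the axioms; because $\tilde T$ is abelian, $U_t$ commutes with each $U_n$, and one checks that $t\mapsto U_tF'U_t^{-1}-F'$ remains norm-continuous into $\mathrm K(E)$ after multiplication by $\phi(a)$. Now $F'$ is genuinely $N$-invariant.

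Next I would form the isotypic projection $P=|N|^{-1}\sum_{n\in N}U_n$ onto the $N$-fixed submodule $E^N=PE$: it is an even adjointable projection, it is $\tilde T$-equivariant, it commutes with $\phi(A)$, and (since $F'$ is $N$-invariant) with $F'$. As $E^N$ is a complemented, hence countably generated, graded $\tilde T$-Hilbert $B$-module on which $N$ acts trivially, its $\tilde T$-action factors through $T$, and $(E^N,\phi|_{E^N},F'|_{E^N})\in\mathbb E_T(A,B)$: each axiom is inherited by compressing with $P$, e.g. $[F'|_{E^N},\phi(a)|_{E^N}]=P[F',\phi(a)]P\in\mathrm K(E^N)$, and $\big(g.(F'|_{E^N})-F'|_{E^N}\big)\phi(a)|_{E^N}$ is the $P$-compression of $(\tilde g.F'-F')\phi(a)$ for any lift $\tilde g\in\tilde T$ of $g\in T$. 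This defines a map $\Psi\colon\mathbb E_{\tilde T}(A,B)\to\mathbb E_T(A,B)$.

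I would then verify that $\Psi$ descends to $KK$-groups. It sends isomorphic modules to isomorphic ones, since any $\tilde T$-equivariant isomorphism commutes with the $U_n$, hence with $P$ and with the $N$-averaging of $F$. Applied verbatim with $B$ replaced by $IB=B\otimes C([0,1])$ — on which $N$ still acts trivially — it carries a $\tilde T$-homotopy to a $T$-homotopy and is compatible with the pushouts $\mathcal E\mapsto\mathcal E_{\pi_0},\mathcal E_{\pi_1}$; hence $\Psi$ induces $\Psi_\ast\colon KK_{\tilde T}(A,B)\to KK_T(A,B)$. Finally $\Psi_\ast\circ s^\ast=\mathrm{id}$ essentially by inspection: if $(E,\phi,F)\in\mathbb E_T(A,B)$ then in $s^\ast(E,\phi,F)$ the group $N$ already acts trivially, so $F$ equals its own $N$-average, $P$ is the identity, $E^N=E$, and $\Psi(s^\ast(E,\phi,F))=(E,\phi,F)$. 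Therefore $s^\ast$ is injective.

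The conceptual content is small; the work is bookkeeping. The step to watch is checking that every defining condition for an equivariant Kasparov module survives the two operations — averaging $F$ over $N$, then compressing to $E^N$ — and, more importantly, that $\Psi$ is natural enough in the coefficient algebra to respect the homotopy relation defining $KK$, i.e. compatible with the evaluation maps $\pi_t\colon IB\to B$. Both are routine given the axioms recalled in Section \ref{section2}, but they are exactly where the hypothesis that $N$ acts trivially on $A$ and $B$ (equivalently, that $A$ and $B$ come from $T$-$\mathrm{C}^\ast$-algebras) must be invoked.
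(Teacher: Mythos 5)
Your proof is correct, and it is genuinely different in strategy from the one in the paper. The paper proves the lemma as a corollary of a much stronger statement: writing $C=\ker s$, it constructs functors $\nu\colon\mathbb E_{\tilde T}(A,B)\to\prod_{\lambda\in\mathscr X(C)}\mathbb E_T(A,B)$ and $\mu$ in the other direction that decompose a $\tilde T$-module into \emph{all} of its $C$-isotypic pieces $\mathrm{Hom}(V_{\tau(\lambda)},E)^C$, shows $\mu\nu=\mathrm{id}$ and $\nu\mu=\mathrm{id}$ so that the categories are equivalent, and from this derives the isomorphism $KK_{\tilde T}(A,B)\cong\bigoplus_{\lambda\in\mathscr X(C)}KK_T(A,B)\cong R(\tilde T)\otimes_{R(T)}KK_T(A,B)$; injectivity of $s^\ast$ then follows because $R(\tilde T)$ is free over $R(T)$. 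You instead build an explicit one-sided inverse $\Psi_\ast$ by averaging $F$ over $N=\ker s$ and then compressing to the single trivial isotypic component $E^N$. That is, your $\Psi$ is essentially the $\lambda=0$ coordinate of the paper's $\nu$, together with the preliminary $N$-averaging of $F$ that makes the compression legitimate. Your route is more economical for the stated lemma — it proves exactly injectivity and nothing more — and it makes explicit a normalization (replacing $F$ by its $N$-average so that it genuinely commutes with the projection $P$) that the paper leaves implicit when it writes $\tilde F_\lambda(f)(v)=F(f(v))$; strictly speaking $F$ only almost commutes with the group action, so some such normalization is needed in the paper's construction as well. What you give up is the stronger structural statement $KK_{\tilde T}(A,B)\cong R(\tilde T)\otimes_{R(T)}KK_T(A,B)$, which the paper obtains as a by-product. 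Since the lemma asks only for injectivity, your proof is complete and in fact slightly tighter than the argument in the text.
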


\begin{proof}
Let $t\colon C\rightarrow \tilde{T}$ be the kernel of $s$. Let $\mathbb{E}_T$ be \[\mathbb{E}_T=\prod_{\lambda\in\mathscr{X}(C)}\mathbb{E}_T(A,B)\]where $\mathscr{X}(C)$ is the character group of $C$. We write an object of $\mathbb{E}_T$ as an $\mathscr{X}(C)$-tuple $([E_\lambda,\phi_\lambda,F_\lambda])_{\lambda\in\mathscr{X}(C)}$, where each $[E_\lambda,\phi_\lambda,F_\lambda]$ is an element in $\mathbb{E}_T(A,B)$. The restriction homomorphism $t^\ast\colon\mathscr{X}(\tilde{T})\rightarrow\mathscr{X}(C)$ is surjective, see \cite{Snaith}. We choose a set-theoretic left inverse $\tau$. Let $V_\zeta$ be the one-dimensional $\tilde{T}$-module defined by a character $\zeta\in\mathscr{X}(\tilde{T})$. Let $\mathbb{E}_{\tilde{T}}=\mathbb{E}_{\tilde{T}}(A,B)$ and $[E,\phi,F]\in\mathbb{E}_{\tilde{T}}$. Since $C$ acts trivially on the $T$-$\mathrm{C}^\ast$-algebra $B$, the $C$-invariant subspace $E^C$ of $E$ is a well-defined $T$-Hilbert $B$-module. For all objects $[E,\phi,F]$ in $\mathbb{E}_{\tilde{T}}$, define $\nu\colon\mathbb{E}_{\tilde{T}}\rightarrow\mathbb{E}_T$ by \[\nu([E,\phi,F])=[Hom(V_{\tau(\lambda)},E)^C,\tilde{\phi}_\lambda,\tilde{F}_\lambda]_{\lambda\in\mathscr{X}(C)}\]where $Hom(V_{\tau(\lambda)},E)$ is the set of all $\tilde{T}$-maps from $V_{\tau(\lambda)}$ to $E$. It is a $\tilde{T}$-Hilbert $B$-module with the $B$-module structure defined by \[fb(v)=f(v)b\]for all $b\in B$ and $v\in V_{\tau(\lambda)}$. $Hom(V_{\tau(\lambda)},E)^C$ is the $C$-invariant subspace of $Hom(V_{\tau(\lambda)},E)$. Since $C$ acts trivially on the $T$-$\mathrm{C}^\ast$-algebra $B$, $Hom(V_{\tau(\lambda)},E)^C$ is a $T$-Hilbert $B$-module. $\tilde{\phi}_\lambda\colon A^C\rightarrow\mathrm{B}(Hom(V_{\tau(\lambda)},E)^C)$ where $A^C$ is a $T$-$\mathrm{C}^\ast$-algebra by taking $C$-invariants of the $\tilde{T}$-action on $A$, is defined by \[(\tilde{\phi}_\lambda (a)f)(v)=\phi(a)(f(v))\]for all $f\in Hom(V_{\tau(\lambda)},E)$, $v\in V_{\tau(\lambda)}$ and $\lambda\in\mathscr{C}(C)$. It is easy to check that $\tilde{\phi}_\lambda$ is a $T$-$\ast$-homomorphism. Similarly, $\tilde{F}_\lambda \in\mathrm{B}(Hom(V_{\tau(\lambda)},E)^C)$ is defined by \[(\tilde{F}_\lambda(f))(v)=F(f(v))\]for all $f\in Hom(V_{\tau(\lambda)},E)^C$ and $v\in V_{\tau(\lambda)}$. Again, it is routine to check that $\tilde{F}_\lambda$ is a $T$-Hilbert $B$-module map.

For all objects $[E_\lambda,\phi_\lambda,F_\lambda]_{\lambda\in\mathscr{X}(C)}$ in $\mathbb{E}_T$, define $\mu\colon\mathbb{E}_T\rightarrow\mathbb{E}_{\tilde{T}}$ by \[\mu([E_\lambda,\phi_\lambda,F_\lambda]_{\lambda\in\mathscr{X}(C)})=\bigoplus_{\lambda\in\mathscr{X}(C)}[V_{\tau(\lambda)}\otimes s^\ast E_\lambda,id\otimes s^\ast\phi_\lambda, id\otimes s^\ast F_\lambda]\]where $s^\ast E_\lambda$ is regarded as a $\tilde{T}$-Hilbert $B$-module through $s$. Likewise, $s^\ast \phi_\lambda$ and $s^\ast F_\lambda$ are regarded as $\tilde{T}$-$\ast$-homomorphisms and $\tilde{T}$-Hilbert $B$-module maps via $s$ respectively. $V_{\tau(\lambda)}\otimes s^\ast E_\lambda$ is the external tensor product of $V_{\tau(\lambda)}$ (as a $\tilde{T}$-Hilbert space) and $s^\ast E_\lambda$. Hence it is a $\tilde{T}$-Hilbert $B$-module itself after identifying $\mathbb{C}\otimes B$ with $B$ as $\tilde{T}$-$\mathrm{C}^\ast$-algebras. 

Then, for all $[E_\lambda,\phi_\lambda,F_\lambda]_{\lambda\in\mathscr{X}(C)}$ in $\mathbb{E}_T$, \[\nu(\mu([E_\lambda,\phi_\lambda,F_\lambda]_\lambda))=[Hom(V_{\tau(\psi)},\bigoplus_\lambda V_{\tau(\lambda)}\otimes s^\ast E_\lambda)^C, \bigoplus_\lambda(\widetilde{id\otimes s^\ast\phi_\lambda})_\psi, \bigoplus_\lambda(\widetilde{id \otimes s^\ast F_\lambda})_\psi]_{\psi\in\mathscr{X}(C)}.\]And 
\begin{eqnarray}
Hom(V_{\tau(\psi)},\bigoplus_\lambda V_{\tau(\lambda)}\otimes s^\ast E_\lambda)^C
  &=& \bigoplus_\lambda Hom(V_{\tau(\psi)},V_{\tau(\lambda)}\otimes s^\ast E_\lambda)^C\nonumber\\
  &=& \bigoplus_\lambda Hom(V_{\tau(\psi)},V_{\tau(\lambda)})^C \otimes (s^\ast E_\lambda)^C \nonumber\\
  &=& E_\psi. \nonumber
\end{eqnarray}From here it is easily verified that \begin{eqnarray}
(\widetilde{id\otimes s^\ast \phi_\lambda})_\psi=\phi_\psi\nonumber\\
(\widetilde{id\otimes s^\ast F_\lambda})_\psi= F_\psi\nonumber
\end{eqnarray}if $\lambda=\psi$. And $(\widetilde{id\otimes s^\ast \phi_\lambda})_\psi=0$, $(\widetilde{id\otimes s^\ast F_\lambda})_\psi=0$ otherwise. and Hence, \[\nu\mu([E_\lambda,\phi_\lambda,F_\lambda]_\lambda)=[E_\lambda,\phi_\lambda,F_\lambda]_\lambda.\]

For all objects $[E,\phi,F]$ in $\mathbb{E}_{\tilde{T}}$, \[\mu(\nu([E,\phi,F]))=\bigoplus_\lambda[V_{\tau(\lambda)}\otimes s^\ast(Hom(V_{\tau(\lambda)},E)^C), id\otimes s^\ast\tilde{\phi}_\lambda, id\otimes s^\ast\tilde{F}_\lambda].\]We have \[\bigoplus_\lambda V_{\tau(\lambda)}\otimes s^\ast(Hom(V_{\tau(\lambda)},E)^C)\cong E\] by virtue of Chapter III (6.4) in \cite{brocker}. From here it is easily verified that \begin{eqnarray}
\bigoplus_\lambda id \otimes s^\ast\tilde{\phi}_\lambda\cong \phi\nonumber\\
\bigoplus_\lambda id \otimes s^\ast\tilde{F}_\lambda\cong F.\nonumber
\end{eqnarray}Hence, we have \[\mu\nu([E,\phi,F])=[E,\phi,F].\]We conclude that the  categories $\mathbb{E}_{\tilde{T}}$ and $\mathbb{E}_{T}$ are equivalent. 

If two elements in $x,y\in\mathbb{E}_{\tilde{T}}(A,B)$ are homotopic, i.e. they represent the same class in $KK_{\tilde{T}}(A,B)$, then there exists an element $a\in\mathbb{E}_{\tilde{T}}(A,B[0,1])$ such that $(ev_0)_\ast (a)=x$ and $(ev_1)_\ast (a)=y$, where $ev_j \colon B([0,1])\rightarrow B$ is the evaluation at $j$, $j=0,1$. We consider the element $\nu(a)=(a_\lambda)_{\lambda\in\mathscr{X}(C)}\in\prod_\lambda \mathbb{E}_T(A,B([0,1]))$. Then $(ev_0)_\ast ((a_\lambda)_{\lambda\in\mathscr{X}(C)})$ and $(ev_1)_\ast ((a_\lambda)_{\lambda\in\mathscr{X}(C)})$ are homotopic in $\prod_\lambda \mathbb{E}_T(A,B)$. A couple of definition-tracing arguments show that $\mu((ev_0)_\ast ((a_\lambda)_\lambda))=x$ and $\mu((ev_1)_\ast ((a_\lambda)_\lambda))=y$ in $\mathbb{E}_{\tilde{T}}(A,B)$. It means that there is a well-defined injective map from $KK_{\tilde{T}}(A,B)$ to $\oplus_\lambda KK_T(A,B)$. A very similar argument starting from two homotopic elements in $\prod_\lambda \mathbb{E}_T(A,B)$ shows the reverse inclusion and hence we obtain \[\bigoplus_{\lambda\in\mathscr{X}(C)} KK_T(A,B)\cong KK_{\tilde{T}}(A,B).\]The isomorphism $\oplus_\lambda KK_T(A,B)\rightarrow KK_{\tilde{T}}(A,B)$ is defined by \[[E_\lambda,\phi_\lambda,F_\lambda]_{\lambda\in\mathscr{X}(C)}\mapsto \sum_{\lambda\in\mathscr{X}(C)}[V_{\tau(\lambda)}]\otimes_\mathbb{C} s^\ast([E_\lambda,\phi_\lambda,F_\lambda])\]where $[V_{\tau(\lambda)}]\in R(\tilde{T})\cong KK_{\tilde{T}}(\mathbb{C},\mathbb{C})$ and $\otimes_\mathbb{C}$ is the Kasparov product over $\mathbb{C}$. In particular, setting $A=\mathbb{C}$ and $B=\mathbb{C}$ gives \[\bigoplus_{\lambda\in\mathscr{X}(C)}R(T)\cong R(\tilde{T})\] and hence \[\bigoplus_{\lambda\in\mathscr{C}(C)}KK_T(A,B)\cong R(\tilde{T})\otimes_{R(T)}KK_T(A,B).\]Hence, we have \[KK_{\tilde{T}}(A,B)\cong R(\tilde{T})\otimes_{R(T)}KK_T(A,B)\]which proves the lemma.
\end{proof}

\subsection{Main Theorem}\label{section2.5}

In this section, we shall show our main theorems, Theorem \ref{main theorem} and Theorem \ref{main theorem2}.

Let $\alpha$ be a root, $G_\alpha$ be the centralizer in $G$ of $\ker\alpha$ and $i_\alpha \colon T\rightarrow G_\alpha$ be the inclusion. Motivated by the definition of $i_!$, we want to define a `pushforward' map $i_{\alpha,!}\colon KK_T(A,B)\rightarrow KK_{G_\alpha}(A,B)$ for every root $\alpha$. First, we choose a complex structure on $G_\alpha /T$. We do this by identifying $G_\alpha /T$ with the complex homogeneous space $(G_\alpha)_{\mathbb{C}}/B$ where $B_\alpha$ is the Borel subgroup of $(G_\alpha)_{\mathbb{C}}$ generated by $T_{\mathbb{C}}$ and the root space $\mathfrak{g}_{\mathbb{C}}^{-\alpha}$. Then $[i_{\alpha,!}]$ is defined in the same way as $[i_!]$ in Section \ref{section2.2}. Moreover, the map $i_{\alpha,!}\colon KK_T(A,B)\rightarrow KK_{G_\alpha}(A,B)$ is also defined in the same way as $i_!$, see Section \ref{section2.4}. 

Define $\sigma_\alpha \colon KK_T(A,B)\longrightarrow KK_T(A,B)$ by \[\sigma_\alpha=i_\alpha^\ast\circ i_{\alpha,!}\]for every root $\alpha$.

By Lemma \ref{rho} for $G=G_\alpha$, $\sigma_\alpha$ has the properties that $\sigma_\alpha^2=\sigma_\alpha$ and $\sigma_\alpha(i_\alpha^\ast(x))=i_\alpha^\ast(x)$ for $x\in KK_{G_\alpha}(A,B)$. 

\begin{definition}
$\sigma_\alpha$ as defined above is called the \emph{divided difference operator corresponding to the root $\alpha$}. The set $\{\sigma_\alpha| \alpha\in\mathscr{R}\}$ is called the set of divided difference operators which act on $KK_T(A,B)$.
\end{definition}

Under the same assumptions as in Theorem \ref{hecke algebra} we have $\sigma_\alpha=1\otimes \delta_\alpha$ for all roots $\alpha$.

\begin{remark}
The power of equivariant $KK$-theory comes from the fact that it generalizes both equivariant $K$-theory and equivariant $K$-homology. In $K$-theory, when $A=\mathbb{C}$ and $B=C(M)$ where $M$ is a compact $G$-space, our set of divided difference operators specializes to a set of divided difference operators in $T$-equivariant $K$-theory of $M$, $K_T(M)$, which was first defined in \cite{Sjamaar}. On the other hand, if $B=\mathbb{C}$, then it simply means that we have now abstractly defined a set of divided difference operators in $K_T^0(A)$.
\end{remark}

\vspace{3mm} 

Let $\mathscr{E}=End_{R(G)}(R(T))$ be the $R(G)$-algebra of $R(G)$-linear endomorphisms of $R(T)$. Let $\mathscr{D}$ be the subalgebra of $\mathscr{E}$ generated by the isobaric divided difference operators $\delta_\alpha$ and the elements of $R(T)$ (as multiplication operators). By definitions of $\partial_\omega$, $\partial_\omega '$ in Section \ref{section2.4}, we have $\partial_\omega, \partial_\omega '\in\mathscr{D}$ for all $\omega$. As a ring $\mathscr{D}$ is isomorphic to the Hecke algebra over $\mathbb{Z}$ of the extended affine Weyl group $\mathscr{X}(T)\rtimes W$, see \cite{Kazhdan}. In \cite{Sjamaar} $\mathscr{D}$ is called the Hecke algebra. 

The \emph{augmentation left ideal} of $\mathscr{D}$ is the annihilator of the identity element $1\in R(T)$, that is \[I(\mathscr{D})=\{\Delta\in\mathscr{D}|\Delta(1)=0\}.\]By (\ref{firstequation}), $\mathscr{D}$ contains the group ring $\mathbb{Z}[W]$ when $\mathbb{Z}[W]$ is viewed as an algebra of endomorphisms of $R(T)$. Hence $I(\mathscr{D})$ naturally contains the augmentation ideal $I(W)$ of $\mathbb{Z}[W]$. Since $\partial_\omega '(1)=0$ for $\omega\neq 1$, $I(\mathscr{D})$ contains all $\partial_\omega '$ when $\omega\neq 1$.

Some properties of $\mathscr{D}$ and $I(\mathscr{D})$ are noted as follows.

\begin{theorem}[Harada, Landweber and Sjamaar] \label{sjamaartheorem}

(i) $(\partial_\omega)_{\omega\in W}$ is a basis of the left $R(T)$-module $\mathscr{D}$.

(ii) $(\partial_\omega ')_{\omega\in W}$ is a basis of the left $R(T)$-module $\mathscr{D}$.

(iii) $(\partial_\omega)_{\omega\neq 1}$ is a basis of the left $R(T)$-module $I(\mathscr{D})$.
\end{theorem}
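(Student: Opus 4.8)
The plan is to reduce everything to a purely algebraic statement about the Hecke algebra $\mathscr{D} = \mathrm{End}_{R(G)}(R(T))$, and then to combinatorics of reduced words in the Weyl group $W$. For part (i), I would first recall Demazure's fundamental relations: $\delta_\alpha \cdot e^\lambda = e^{s_\alpha(\lambda)} \cdot \delta_\alpha + \delta_\alpha(e^\lambda - e^{s_\alpha\lambda})/(1 - e^{-\alpha})\cdot(\text{something in }R(T))$ — more usefully, the commutation rule $\delta_\alpha u = (s_\alpha u)\delta_\alpha + \delta_\alpha(u)$ for $u\in R(T)$ (viewing $\delta_\alpha(u)$ as a scalar), so that $\mathscr{D}$ is spanned as a \emph{left} $R(T)$-module by products $\delta_{\beta_1}\cdots\delta_{\beta_l}$ of simple divided difference operators. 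The braid relations plus $\delta_\alpha^2 = \delta_\alpha$ then let one rewrite any such product, via reduced-word manipulations, as an $R(T)$-combination of the $\partial_\omega$, $\omega\in W$; this gives spanning. For linear independence, I would evaluate: apply a general element $\sum_\omega c_\omega \partial_\omega$ (with $c_\omega\in R(T)$) to suitable test elements of $R(T)$ — or better, pass to the fraction field $\mathrm{Frac}(R(T))$, where each $\partial_\omega$ becomes an explicit operator (a linear combination of the $w\in W$ with coefficients in $\mathrm{Frac}(R(T))$, triangular with respect to Bruhat order), and use that $W$ is a $\mathrm{Frac}(R(T))$-linearly independent set of operators (a Dedekind-type / Artin independence-of-characters argument, since the $w$ act by distinct field automorphisms). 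Triangularity of the change of basis between $\{\partial_\omega\}$ and $\{w\}$ over $\mathrm{Frac}(R(T))$ then forces the $\partial_\omega$ to be independent as well.

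For part (ii), the key input is the stated identity $\partial_\omega' = e^{-\rho}\,\partial_\omega\, e^{\rho}$ (conjugation by the multiplication operator $e^{\rho}$; I read the excerpt's "$e^{-\rho}\partial_\omega e^{-\rho}$" as a typo for $e^{-\rho}\partial_\omega e^{\rho}$, which is the standard normalization). Conjugation by the unit $e^{\rho}\in R(T)^\times$ is an $R(G)$-algebra automorphism of $\mathscr{D}$ which is \emph{semilinear} over the left $R(T)$-action in a controlled way: $e^{-\rho}(u\,\partial_\omega)e^{\rho} = (e^{-\rho} u e^{\rho})(e^{-\rho}\partial_\omega e^{\rho})$, and $e^{-\rho} u e^{\rho} = u$ since $R(T)$ is commutative. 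Hence $u\mapsto e^{-\rho}u e^{\rho}$ is the identity on the scalars and sends the basis $\{\partial_\omega\}$ to $\{\partial_\omega'\}$; therefore $\{\partial_\omega'\}$ is again a left $R(T)$-basis of $\mathscr{D}$. (Alternatively one can run the same reduced-word argument directly with the $\delta_\alpha'$, using $(\delta_\alpha')^2 = \delta_\alpha'$ and the braid relations.)

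For part (iii), I would argue that the augmentation map $\mathrm{ev}_1\colon \mathscr{D}\to R(T)$, $\Delta\mapsto \Delta(1)$, is left $R(T)$-linear and surjective (it hits $1\in R(T)$ via $\mathrm{id} = \partial_e$, hence all of $R(T)$), with kernel $I(\mathscr{D})$ by definition. Using the basis $\{\partial_\omega'\}$ from (ii) together with $\partial_1' = \mathrm{id}$, $\partial_\omega'(1) = 0$ for $\omega\neq 1$, one sees $\mathrm{ev}_1(\sum_\omega c_\omega \partial_\omega') = c_1$, so $I(\mathscr{D}) = \bigoplus_{\omega\neq 1} R(T)\,\partial_\omega'$ is free on $\{\partial_\omega'\}_{\omega\neq 1}$ — already this proves the analogue of (iii) for the primed basis. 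To get it for the \emph{unprimed} $\partial_\omega$ as stated, I would compute $\partial_\omega(1)$: since $\partial_\omega = \delta_{\beta_1}\cdots\delta_{\beta_l}$ and each $\delta_{\beta_i}(1) = 1$, we get $\partial_\omega(1) = 1$ for \emph{every} $\omega$. Thus $\mathrm{ev}_1(\sum_\omega c_\omega\partial_\omega) = \sum_\omega c_\omega$ (an element of $R(T)$, using the scalar action), so $\sum_\omega c_\omega\partial_\omega \in I(\mathscr{D})$ iff $\sum_\omega c_\omega = 0$, i.e. iff $c_1 = -\sum_{\omega\neq 1}c_\omega$. Substituting, $\sum_\omega c_\omega\partial_\omega = \sum_{\omega\neq 1} c_\omega(\partial_\omega - \partial_1)$, so $\{\partial_\omega - \mathrm{id}\}_{\omega\neq 1}$ spans $I(\mathscr{D})$; and it is $R(T)$-independent because $\{\partial_\omega\}_{\omega\in W}$ is, by (i). One then checks the elementary change of basis sending $\{\partial_\omega - \mathrm{id}\}_{\omega\neq 1}$ to $\{\partial_\omega\}_{\omega\neq 1}$ (in $\mathscr{D}/R(T)\cdot\mathrm{id}$, or directly) is invertible over $\mathbb{Z}$, which upgrades "spans" to "is a basis".

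The main obstacle I expect is part (i)'s linear independence: making the reduced-word rewriting well-defined (independence of the chosen reduced expression is exactly the content of "$\partial_\omega$ is well-defined", already asserted in the excerpt, so I can cite it) and then nailing the independence cleanly. The cleanest route is the passage to $\mathrm{Frac}(R(T))\rtimes W$ and the triangularity-over-Bruhat-order of $\{\partial_\omega\}$ versus $\{w\}$; the one subtlety is ensuring the leading coefficients (the entries $\partial_\omega = \sum_{v\le\omega} p_{\omega,v}\, v$ with $p_{\omega,\omega}\neq 0$) are genuinely nonzero in $\mathrm{Frac}(R(T))$, which follows from the explicit product formula for the denominators $\prod(1-e^{-\beta_i})$ never vanishing in the fraction field. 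Everything else is bookkeeping with the commutation relation $\delta_\alpha u = (s_\alpha u)\delta_\alpha + \delta_\alpha(u)$ and the identity $\delta_\alpha^2=\delta_\alpha$.
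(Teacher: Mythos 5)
The paper itself offers no proof of Theorem~\ref{sjamaartheorem}; it is imported verbatim from Harada, Landweber and Sjamaar \cite{Sjamaar} (their Theorem~3.4). So there is nothing in the paper to compare your argument against, and I will assess it on its own terms. Your treatment of (i) and (ii) is the standard Hecke-algebra argument and is structurally sound. Two small slips that do not change the argument: the Leibniz commutation rule is $\delta_\alpha\circ m_u = m_{\delta_\alpha'(u)} + m_{s_\alpha(u)}\circ\delta_\alpha$ (the scalar term is $\delta_\alpha'(u)$, not $\delta_\alpha(u)$), and a direct computation with $s_\alpha(\rho)=\rho-\alpha$ gives $\delta_\alpha' = e^{\rho}\,\delta_\alpha\,e^{-\rho}$, hence $\partial_\omega' = e^{\rho}\,\partial_\omega\,e^{-\rho}$, not the version you wrote; in either case it is conjugation by a unit that fixes $R(T)$ acting by multiplication, which is all you use.

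For (iii), however, your own observation $\partial_\omega(1)=1$ refutes the statement as printed: since $\partial_\omega(1)=1\neq 0$ for every $\omega$, no $\partial_\omega$ lies in $I(\mathscr{D})$, so $(\partial_\omega)_{\omega\neq 1}$ cannot be a basis of $I(\mathscr{D})$. The statement in the paper is a misprint; the correct assertion (and what \cite{Sjamaar} actually proves) is that $(\partial_\omega')_{\omega\neq 1}$ is a basis of $I(\mathscr{D})$, which is exactly what you establish via $\mathrm{ev}_1\bigl(\sum_\omega c_\omega\partial_\omega'\bigr)=c_1$. Your attempted ``upgrade'' at the end is where things go wrong: passing to $\mathscr{D}/R(T)\cdot\mathrm{id}$ and noting that the classes of $\partial_\omega$, $\omega\neq 1$, form a basis there does not make $(\partial_\omega)_{\omega\neq 1}$ a basis of $I(\mathscr{D})$ itself. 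Under the natural splitting $\mathscr{D}=R(T)\cdot\mathrm{id}\oplus I(\mathscr{D})$ given by $\Delta\mapsto\Delta(1)\cdot\mathrm{id}+\bigl(\Delta-\Delta(1)\cdot\mathrm{id}\bigr)$, the canonical isomorphism $\mathscr{D}/R(T)\cdot\mathrm{id}\to I(\mathscr{D})$ sends the class of $\partial_\omega$ to $\partial_\omega-\mathrm{id}$, not to $\partial_\omega$. So what you have actually proved is that $(\partial_\omega-\mathrm{id})_{\omega\neq 1}$, equivalently $(\partial_\omega')_{\omega\neq 1}$, is a basis of $I(\mathscr{D})$; you should state that and flag the typographical error in the cited theorem rather than try to convert the conclusion into the literal form (iii).
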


Let $N$ be a left $\mathscr{D}$-module. We say an element of $N$ is \emph{$\mathscr{D}$-invariant} if it is annihilated by all operators in the augmentation left ideal $I(\mathscr{D})$. Let $N^{I(\mathscr{D})}$ be the group of invariants. By Theorem \ref{sjamaartheorem}, \[N^{I(\mathscr{D})}=\{n\in N| \partial_\omega '(n)=0,\mbox{for all} \mbox{ }\omega\neq 1\}.\]Since $I(\mathscr{D})$ contains the augmentation left ideal $I(W)$ of $\mathbb{Z}[W]$, we have \begin{eqnarray}N^{I(\mathscr{D})}\subseteq N^W\label{inequality}\end{eqnarray}where $N^W$ contains elements that are invariant under the Weyl group action.

We now show that $KK_T(A,B)$ is equipped with a left $\mathscr{D}$-module structure in Theorem \ref{main theorem}. Then, by (\ref{inequality}), we have the following \begin{eqnarray}KK_T(A,B)^{I(\mathscr{D})}\subseteq KK_T(A,B)^W. \label{inequality2} \end{eqnarray}We will discuss (\ref{inequality2}) in Section \ref{section2.6}.

\begin{theorem} \label{main theorem}
The operators $\sigma_\alpha$ for $\alpha\in\mathscr{R}$, together with the natural $R(T)$-module structure generate an unique $\mathscr{D}$-module structure on $KK_T(A,B)$.
\end{theorem}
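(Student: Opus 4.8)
The plan is to establish the $\mathscr{D}$-module structure on $KK_T(A,B)$ by matching it, via the identification in Theorem~\ref{Rosenberg_result}, with the obvious $\mathscr{D}$-action on $KK_G(A,B)\otimes_{R(G)}R(T)$, and then removing the Hodgkin hypothesis by a covering-torus descent argument. First I would treat the Hodgkin case. By Theorem~\ref{Rosenberg_result} we have an isomorphism of $R(T)$-modules $KK_T(A,B)\cong KK_G(A,B)\otimes_{R(G)}R(T)$. On the right-hand side $\mathscr{D}=\langle R(T),\delta_\alpha\rangle\subseteq\mathrm{End}_{R(G)}(R(T))$ acts via $1\otimes(-)$, i.e.\ through its tautological action on the second tensor factor; this is manifestly an $R(T)$-module structure extending the given one, since $R(T)\subseteq\mathscr{D}$ acts by multiplication on the second factor. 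Theorem~\ref{hecke algebra} and the remark following it already identify $\sigma_\alpha$ with $1\otimes\delta_\alpha$ and $\sigma$ with $1\otimes\partial_{\omega_0}$ under this identification, so the operators $\sigma_\alpha$ defined intrinsically on $KK_T(A,B)$ coincide with the action of the generators $\delta_\alpha\in\mathscr{D}$. Hence in the Hodgkin case the $\sigma_\alpha$ together with the $R(T)$-multiplications satisfy all the defining relations of $\mathscr{D}$ (Demazure/braid relations among the $\delta_\alpha$, and the cross relations between $\delta_\alpha$ and multiplication operators), because these relations hold in $\mathrm{End}_{R(G)}(R(T))$ and are transported verbatim. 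Uniqueness in the Hodgkin case is immediate: any $\mathscr{D}$-module structure compatible with the $R(T)$-structure is determined by the images of the algebra generators $\delta_\alpha$, and these must be $\sigma_\alpha$.

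For the general (non-Hodgkin) case I would use Lemma~\ref{Lemma11}. Choosing a covering homomorphism $s\colon\tilde T\to T$ with $\tilde T$ such that $G$ becomes (or is replaced by) a Hodgkin group $\tilde G$ with maximal torus $\tilde T$ — concretely, pass to a finite cover of $G$ with torsion-free fundamental group, which always exists — Lemma~\ref{Lemma11} gives an injection $s^\ast\colon KK_T(A,B)\hookrightarrow KK_{\tilde T}(A,B)$ of $R(T)$-modules, compatible with the restriction of scalars $R(\tilde T)\to R(T)$... actually $s^\ast$ is $R(T)\to R(\tilde T)$-semilinear. One checks that $s^\ast$ intertwines $\sigma_\alpha$ on $KK_T$ with the corresponding $\sigma_{\tilde\alpha}$ on $KK_{\tilde T}$ (the divided difference operators are natural with respect to such coverings, since the homogeneous spaces $G_\alpha/T$ and $\tilde G_{\tilde\alpha}/\tilde T$ are equivariantly diffeomorphic and the Dolbeault elements correspond). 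Since the $\mathscr{D}$-module structure is already established on $KK_{\tilde T}(A,B)$, and $\mathscr{D}$ for $(G,T)$ embeds into $\mathscr{D}$ for $(\tilde G,\tilde T)$ compatibly (same root system up to the finite cover; more precisely $R(T)\hookrightarrow R(\tilde T)$ and $\delta_\alpha$ extends $\delta_{\tilde\alpha}$), the submodule $s^\ast(KK_T(A,B))$ is carried into itself by the $\mathscr{D}$-action, yielding the desired $\mathscr{D}$-module structure on $KK_T(A,B)$ by restriction; injectivity of $s^\ast$ forces uniqueness.

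The main obstacle I anticipate is the descent step: verifying that $s^\ast(KK_T(A,B))$ is a $\mathscr{D}$-submodule of $KK_{\tilde T}(A,B)$, i.e.\ that applying $\delta_{\tilde\alpha}$ to an element in the image lands back in the image. The clean way to see this is to use the explicit description from Lemma~\ref{Lemma11}, $KK_{\tilde T}(A,B)\cong R(\tilde T)\otimes_{R(T)}KK_T(A,B)$, so that $s^\ast$ is $a\mapsto 1\otimes a$ and the image is $1\otimes KK_T(A,B)$; then one needs that $\delta_{\tilde\alpha}(R(\tilde T))\subseteq R(\tilde T)$ restricts correctly and that $\sigma_{\tilde\alpha}(1\otimes a)=1\otimes\sigma_\alpha(a)$, which follows from naturality of induction and the Dolbeault element under the covering $\tilde G\to G$ together with $\delta_{\tilde\alpha}(1)=1$. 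A secondary point requiring care is making precise that the finitely many defining relations of the abstract algebra $\mathscr{D}$ (as the affine Hecke algebra of $\mathscr{X}(T)\rtimes W$) are exactly what the naturality of $i_{\alpha,!}$ and the quadratic relation $\sigma_\alpha^2=\sigma_\alpha$ from Lemma~\ref{rho} provide; here one leans on Theorem~\ref{sjamaartheorem}(i) to know $\mathscr{D}$ is a free $R(T)$-module on $(\partial_\omega)_{\omega\in W}$, so that checking the action is well-defined reduces to checking it is consistent on this basis — which is automatic once it is consistent in the Hodgkin case and transported by the injection $s^\ast$.
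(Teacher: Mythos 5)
Your proposal is correct and follows essentially the same route as the paper: in the Hodgkin case transport the $\mathscr{D}$-action through the identification $KK_T(A,B)\cong KK_G(A,B)\otimes_{R(G)}R(T)$ of Theorem~\ref{Rosenberg_result} using $\sigma_\alpha=1\otimes\delta_\alpha$ from Theorem~\ref{hecke algebra}, then reduce the general case to the Hodgkin case by covering $\tilde G\to G$, using the injectivity of $s^\ast$ from Lemma~\ref{Lemma11}, the naturality relation $s^\ast\sigma_\alpha=\tilde\sigma_\alpha s^\ast$, and the embedding $\mathscr{D}\hookrightarrow\tilde{\mathscr{D}}$ (which the paper gets from Lemma~2.4 of Harada--Landweber--Sjamaar). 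The one point you flag as the ``main obstacle'' — checking that $s^\ast(KK_T(A,B))$ is preserved by the restricted $\tilde{\mathscr{D}}$-action — is indeed the step the paper treats most briefly, and your resolution via the $R(\tilde T)\otimes_{R(T)}KK_T(A,B)$ description from Lemma~\ref{Lemma11} is a reasonable way to fill it in.
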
 

\begin{proof}
The proof is very similar to Prop. 4.5 in \cite{Sjamaar} and is essentially an application of Theorem \ref{Rosenberg_result}, Theorem \ref{hecke algebra} and Lemma \ref{Lemma11}. First, assume that $G$ is a Hodgkin group. Idenitfy $KK_T(A,B)$ with $KK_G(A,B)\otimes_{R(G)} R(T)$ through the isomorphism of Theorem \ref{Rosenberg_result}. Let \[\mathscr{E}(A,B)=KK_G(A,B)\otimes\mathscr{E}.\]Then the map $\mathscr{D}\rightarrow\mathscr{E}(A,B)$ defined by $\Delta\mapsto 1\otimes\Delta$, where $1$ is the identity map of $KK_G(A,B)$, is a well-defined algebra homomorphism. Since $\sigma_\alpha=1\otimes\delta_\alpha$, $\sigma_\alpha$ generates a well-defined action of $\mathscr{D}$ on $KK_T(A,B)$.

If $G$ is not a Hodgkin group, we choose a covering $s\colon\tilde{G}\rightarrow G$ such that $\tilde{G}$ is a Hodgkin group. By Lemma \ref{Lemma11} the pullpack \[s^\ast\colon KK_T(A,B)\rightarrow KK_{\tilde{T}}(A,B)\]is injective, where $\tilde{T}$ is the maximal torus $s^{-1}(T)$ of $\tilde{G}$. Let $\tilde{\sigma}_\alpha=\tilde{i}_\alpha^\ast \circ \tilde{i}_{\alpha,!}$ be the operator on $KK_{\tilde{T}}(A,B)$ corresponding to $\alpha$, where $\tilde{i}_\alpha\colon\tilde{T}\rightarrow\tilde{G}_\alpha$ is the inclusion. By the naturality properties of $i_\alpha^\ast$ and $i_{\alpha,!}$\begin{eqnarray}s^\ast \sigma_\alpha&=&\tilde{\sigma}_\alpha s^\ast. \label{natural}\end{eqnarray}By Lemma 2.4 \cite{Sjamaar}, $s$ induces an injective algebra homomorphism \[\overline{s}\colon\mathscr{D}\rightarrow\mathscr{\tilde{D}}.\]We already know that $\tilde{\sigma}_\alpha$ generate a well-defined $\tilde{\mathscr{D}}$-action on $KK_{\tilde{T}}(A,B)$. This $\tilde{\mathscr{D}}$-module structure on $KK_{\tilde{T}}(A,B)$ is unique due to Theorem \ref{hecke algebra}. The restriction of the $\tilde{\mathscr{D}}$-action to the subalgebra $\mathscr{D}$ preserves the submodule $KK_T(A,B)$ and by (\ref{natural}), the elements $\sigma_\alpha$ act in the required fashion. It is clear that the $\mathscr{D}$-module structure on $KK_T(A,B)$ so defined is unique.
\end{proof}

By Theorem \ref{main theorem}, it is now clear that if $A=B=\mathbb{C}$, our set of divided difference operators $\sigma_\alpha$ that acts on $KK_T(A,B)=KK_T(\mathbb{C},\mathbb{C})\cong R(T)$ is the same as the set of Demazure's operators $\delta_\alpha$.

If $G$ is a Hodgkin group, let $\mathscr{U}=\mathscr{D}$-Mod and $\mathscr{B}=R(G)$-Mod be the categories of left modules over the rings $\mathscr{D}$ and $R(G)$ respectively. Before stating our next theorem, we invoke the following result shown in \cite{Sjamaar}.

\begin{theorem}[Harada, Landweber and Sjamaar] \label{Sjamaar_result}
If $G$ is a Hodgkin group, then the functor $\mathscr{G}\colon\mathscr{B}\rightarrow\mathscr{U}$ defined by \[B\mapsto B\otimes_{R(G)}R(T)\] is an equivalence with inverse $\mathscr{F}\colon\mathscr{U}\rightarrow\mathscr{B}$ given by \[A\mapsto Hom_{\mathscr{D}}(R(T),A).\]Moreover, $\mathscr{F}$ is naturally isomorphic to the functor $\mathscr{J}\colon\mathscr{U}\rightarrow\mathscr{B}$ given by \[A\mapsto A^{I(\mathscr{D})}.\]
\end{theorem}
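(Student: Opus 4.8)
The plan is to recognise this statement as an instance of classical Morita theory, the progenerator being $R(T)$ regarded simultaneously as a left $\mathscr{D}$-module (the tautological action) and a right $R(G)$-module (by multiplication, which commutes with everything in $\mathscr{D}$ precisely because each $\delta_{\alpha}$, hence every element of $\mathscr{D}$, is $R(G)$-linear by construction). Two facts from outside the excerpt are needed: that for a Hodgkin group $R(T)$ is a free $R(G)$-module of rank $|W|$ (Pittie--Steinberg), in particular a faithful finitely generated projective --- a progenerator --- over $R(G)$; and that $R(T)^{W}=R(G)$ (a standard fact, valid for any compact connected $G$). Everything then reduces to showing that the defining inclusion is an equality,
\[
(\star)\qquad \mathscr{D}\ =\ \mathrm{End}_{R(G)}(R(T))\ =:\ \mathscr{E}.
\]
Granting $(\star)$, the Morita theorem attached to the progenerator $R(T)$ over the commutative ring $R(G)$ asserts that $R(T)\otimes_{R(G)}(-)$ and $\mathrm{Hom}_{\mathrm{End}_{R(G)}(R(T))}(R(T),-)$ are mutually inverse equivalences between $\mathscr{B}=R(G)$-Mod and $\mathrm{End}_{R(G)}(R(T))$-Mod; by $(\star)$ the latter category is $\mathscr{U}=\mathscr{D}$-Mod, and, reordering the tensor factor by commutativity of $R(G)$, the two functors are exactly $\mathscr{G}$ and $\mathscr{F}$. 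That is the first assertion.

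Next I would dispose of the pieces that need only Theorem~\ref{sjamaartheorem} and the facts above; write $m_{u}\in\mathscr{D}$ for ``multiplication by $u$'', $u\in R(T)$. First, $\mathrm{End}_{\mathscr{D}}(R(T))\cong R(G)$: any $f\in\mathrm{End}_{\mathscr{D}}(R(T))$ satisfies $f(u)=f(m_{u}\cdot 1)=u\,f(1)$, so $f=m_{v}$ with $v=f(1)$; evaluating the relation $m_{v}\delta_{\alpha}=\delta_{\alpha}m_{v}$ at $1$ (and using $\delta_{\alpha}(1)=1$) gives $v=\delta_{\alpha}(v)$, which by the defining formula is equivalent to $s_{\alpha}(v)=v$; hence $v\in R(T)^{W}=R(G)$, and conversely every such $m_{v}$ is $\mathscr{D}$-linear. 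This supplies the double-centralizer half of Morita. Second, the ``moreover'' clause: evaluation at $1$ defines a left-$\mathscr{D}$-module map $\mathscr{D}/I(\mathscr{D})\to R(T)$, $\Delta+I(\mathscr{D})\mapsto\Delta(1)$, well defined and injective by the very definition of the augmentation ideal and surjective because $m_{u}\mapsto u$; thus $R(T)\cong\mathscr{D}/I(\mathscr{D})$ as left $\mathscr{D}$-modules, whence
\[
\mathscr{F}(A)=\mathrm{Hom}_{\mathscr{D}}(R(T),A)\cong\mathrm{Hom}_{\mathscr{D}}(\mathscr{D}/I(\mathscr{D}),A)=\{a\in A:\ I(\mathscr{D})\,a=0\}=A^{I(\mathscr{D})}=\mathscr{J}(A),
\]
naturally in $A$, which is the asserted isomorphism $\mathscr{F}\cong\mathscr{J}$.

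It remains to prove $(\star)$, and this is where the real work lies. The inclusion $\mathscr{D}\subseteq\mathscr{E}$ is one of free $R(G)$-modules of the same rank: by Theorem~\ref{sjamaartheorem}(i) $\mathscr{D}$ is free of rank $|W|$ over $R(T)$, hence of rank $|W|^{2}$ over $R(G)$ by freeness of $R(T)$ over $R(G)$, while $\mathscr{E}\cong M_{|W|}(R(G))$ also has rank $|W|^{2}$. To upgrade the inclusion to an equality it suffices to check that the reduction $\mathscr{D}\otimes_{R(G)}\kappa(\mathfrak p)\to\mathscr{E}\otimes_{R(G)}\kappa(\mathfrak p)$ is onto for every prime $\mathfrak p$ of $R(G)$: then the finitely generated cokernel $\mathscr{E}/\mathscr{D}$ dies modulo every prime and hence vanishes by Nakayama. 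At the generic point $\mathfrak p=(0)$ this is the classical fact that for a finite Galois extension $L/K$ with group $\Gamma$ the crossed product $L\rtimes\Gamma$ equals $\mathrm{End}_{K}(L)$, applied to $\mathrm{Frac}(R(T))/\mathrm{Frac}(R(G))$ with group $W$; what one needs is that $\mathscr{D}$ contains the image of the crossed product $R(T)\rtimes W$, and indeed the identity $s_{\alpha}=m_{e^{\alpha}}-m_{e^{\alpha}-1}\circ\delta_{\alpha}$ exhibits every reflection inside $\mathscr{D}$. The genuine obstacle is the finitely many primes at which $R(T)$ ramifies over $R(G)$: there the $W$-action on $R(T)\otimes_{R(G)}\kappa(\mathfrak p)$ degenerates and one must see that the operators $\partial_{\omega}$ ``make up the difference''. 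I would treat this with a Steinberg-type $W$-indexed $R(G)$-basis $\{e_{\omega}\}_{\omega\in W}$ of $R(T)$ and the triangularity, in the Bruhat order, of the matrices of the $\partial_{\omega}$ in that basis, which should force the family $\{m_{e_{\tau}}\partial_{\omega}\}_{\tau,\omega\in W}$ to remain an $R(G)$-spanning set of $\mathscr{E}$ modulo every $\mathfrak p$. Making this triangularity bookkeeping precise is, I expect, the principal difficulty; the rest is formal.
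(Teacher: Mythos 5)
Your reduction of the theorem to the single identity $(\star)\colon\mathscr{D}=\mathrm{End}_{R(G)}(R(T))$ is correct, and this is also the route taken in the source [Sjamaar]; note that the paper under review does not prove this theorem at all, it simply cites it. The two supporting verifications you supply are both right: $\mathrm{End}_{\mathscr{D}}(R(T))\cong R(G)$ by the $\delta_\alpha$-invariance computation, and $R(T)\cong\mathscr{D}/I(\mathscr{D})$ as left $\mathscr{D}$-modules via evaluation at $1$, which yields $\mathscr{F}\cong\mathscr{J}$. With Pittie--Steinberg freeness of $R(T)$ over $R(G)$, these facts together with $(\star)$ do produce the stated Morita equivalence.

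However, $(\star)$ itself is not proved, and you acknowledge as much. Your template of rank count plus prime-by-prime surjectivity plus Nakayama is logically sound, and the generic-fibre step via the crossed product is fine, but the argument stalls at exactly the ramified primes; the Bruhat-order triangularity of the $\partial_\omega$-matrices in a Steinberg basis that you invoke is neither established nor self-evident, and it is really the entire content of $(\star)$. A clean way to close the gap is the perfectness of the Demazure pairing $(u,v)\mapsto\partial_{\omega_0}(uv)\colon R(T)\times R(T)\to R(G)$ (this follows from Steinberg's theorem and is what [Sjamaar] relies on). Given $R(G)$-dual bases $\{e_\omega\}$ and $\{f_\omega\}$ of $R(T)$ with $\partial_{\omega_0}(e_\omega f_v)=\delta_{\omega v}$, any $\phi\in\mathrm{End}_{R(G)}(R(T))$ equals $\sum_{\omega\in W}m_{\phi(e_\omega)}\circ\partial_{\omega_0}\circ m_{f_\omega}$, which visibly lies in $\mathscr{D}$, establishing $(\star)$ in one stroke with no residue-field case analysis. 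As submitted, the proposal does not complete the proof.
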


The following result describes $KK_G(A,B)$ as a direct summand of $KK_T(A,B)$. More precisely, $KK_G(A,B)$ is isomorphic to $KK_T(A,B)$ annihilated by the set of divided difference operators.

\begin{theorem} \label{main theorem2}
For all $G$-$\mathrm{C}^\ast$-algebras $A$ and $B$, the map $i^\ast$ is an isomorphism from $KK_G(A,B)$ onto $KK_T(A,B)^{I(\mathscr{D})}$ where $i$ is the inclusion $T\rightarrow G$.
\end{theorem}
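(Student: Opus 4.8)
The plan is to reduce the statement to the Hodgkin case and then invoke Theorem \ref{Sjamaar_result}. First I would treat the case where $G$ is a Hodgkin group. Here Theorem \ref{Rosenberg_result} identifies $KK_T(A,B)$ with $KK_G(A,B)\otimes_{R(G)}R(T)$ as $R(T)$-modules, and by Theorem \ref{main theorem} this is an isomorphism of $\mathscr{D}$-modules when $KK_G(A,B)\otimes_{R(G)}R(T)$ is given the $\mathscr{D}$-structure $\Delta\mapsto 1\otimes\Delta$. In other words, $KK_T(A,B)=\mathscr{G}(KK_G(A,B))$ in the notation of Theorem \ref{Sjamaar_result}. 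Applying the functor $\mathscr{J}$ (equivalently $\mathscr{F}$, since they are naturally isomorphic) and using that $\mathscr{F}\circ\mathscr{G}$ is naturally isomorphic to the identity on $\mathscr{B}$, I get
\[KK_T(A,B)^{I(\mathscr{D})}=\mathscr{J}(\mathscr{G}(KK_G(A,B)))\cong\mathscr{F}(\mathscr{G}(KK_G(A,B)))\cong KK_G(A,B).\]
The remaining point is to check that this abstract isomorphism is realized by $i^\ast$. This I would do by tracing the construction: under the identification of Theorem \ref{Rosenberg_result} the map $i^\ast$ sends $x\in KK_G(A,B)$ to $x\otimes 1$, which lies in $KK_T(A,B)^{I(\mathscr{D})}$ because $\Delta(1)=0$ for $\Delta\in I(\mathscr{D})$; conversely $\mathscr{F}$ applied to $\mathscr{G}(KK_G(A,B))$ is $\mathrm{Hom}_{\mathscr{D}}(R(T),KK_G(A,B)\otimes_{R(G)}R(T))$, and evaluation at $1\in R(T)$ gives a natural map to $(KK_G(A,B)\otimes_{R(G)}R(T))^{I(\mathscr{D})}$ whose inverse is exactly $x\mapsto x\otimes 1$; so the Sjamaar isomorphism is $i^\ast$ up to the identifications already in place. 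Since $i_!\circ i^\ast=1$ and $i^\ast$ has image inside $KK_T(A,B)^{I(\mathscr{D})}$, injectivity of $i^\ast$ is immediate and only surjectivity onto the invariants needs Theorem \ref{Sjamaar_result}.

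Next I would handle the case where $G$ is not Hodgkin, following the pattern already used in the proof of Theorem \ref{main theorem}. Choose a covering $s\colon\tilde{G}\to G$ with $\tilde{G}$ Hodgkin, with maximal torus $\tilde{T}=s^{-1}(T)$, so that by Lemma \ref{Lemma11} the pullbacks $s^\ast\colon KK_G(A,B)\to KK_{\tilde{G}}(A,B)$ and $s^\ast\colon KK_T(A,B)\to KK_{\tilde{T}}(A,B)$ are injective, and $KK_{\tilde{T}}(A,B)\cong R(\tilde{T})\otimes_{R(T)}KK_T(A,B)$. Using the Hodgkin case for $\tilde{G}$, $\tilde{i}^\ast$ identifies $KK_{\tilde{G}}(A,B)$ with $KK_{\tilde{T}}(A,B)^{I(\tilde{\mathscr{D}})}$, and by the naturality relation $s^\ast\sigma_\alpha=\tilde\sigma_\alpha s^\ast$ (equation (\ref{natural})) together with the injective algebra map $\overline{s}\colon\mathscr{D}\to\tilde{\mathscr{D}}$, the submodule $KK_T(A,B)^{I(\mathscr{D})}$ of $KK_T(A,B)$ sits inside $KK_{\tilde{T}}(A,B)^{I(\tilde{\mathscr{D}})}$ after applying $s^\ast$. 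An element $y\in KK_T(A,B)^{I(\mathscr{D})}$ then has $s^\ast y=\tilde{i}^\ast(z)$ for a unique $z\in KK_{\tilde{G}}(A,B)$; I would show $z$ lies in the image of $s^\ast\colon KK_G(A,B)\to KK_{\tilde{G}}(A,B)$ by a descent/averaging argument over the kernel $C=\ker s$, exactly as the equivalence of categories $\mathbb{E}_{\tilde{T}}\simeq\mathbb{E}_T$ was used in Lemma \ref{Lemma11} — the $C$-action is trivial on $A,B$ so the relevant invariants behave well — and then $y=i^\ast(x)$ where $s^\ast x=z$. Combined with $i_!\circ i^\ast=1$ this gives the isomorphism in the non-Hodgkin case.

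I expect the main obstacle to be the verification, in both cases, that the abstract module-theoretic isomorphism produced by Theorem \ref{Sjamaar_result} (resp. by the descent argument) coincides with the concrete map $i^\ast$ rather than merely being some isomorphism $KK_G(A,B)\xrightarrow{\sim}KK_T(A,B)^{I(\mathscr{D})}$. The cleanest way around this is to use that $i_!$ provides an explicit one-sided inverse: $i^\ast$ is injective and lands in the invariants, so it suffices to show $\dim$ — or rather, rank/module — considerations force it onto all of $KK_T(A,B)^{I(\mathscr{D})}$, which is precisely what $\mathscr{F}(\mathscr{G}(-))\cong\mathrm{id}$ supplies. The bookkeeping of compatible identifications (Wasserman's $\theta$, the isomorphism of Theorem \ref{Rosenberg_result}, and the $\mathscr{D}$-action from Theorem \ref{main theorem}) is routine but must be stated carefully; the descent step for non-Hodgkin $G$ is the only place where genuinely new checking, beyond citing earlier results, is required.
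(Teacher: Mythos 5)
Your proposal follows essentially the same route as the paper: for Hodgkin $G$ it combines Theorem \ref{Rosenberg_result} (to identify $KK_T(A,B)$ with $\mathscr{G}(KK_G(A,B))$) with Theorem \ref{Sjamaar_result} (so $KK_G(A,B)\cong\mathscr{F}\mathscr{G}(KK_G(A,B))\cong\mathscr{J}(KK_T(A,B))=KK_T(A,B)^{I(\mathscr{D})}$), and for non-Hodgkin $G$ it passes to a Hodgkin cover $\tilde{G}$ via Lemma \ref{Lemma11} as in the proof of Theorem \ref{main theorem}. You fill in two points the paper leaves implicit — that the abstract Sjamaar isomorphism is realized by $i^\ast$ (using $i_!\circ i^\ast=1$ and $x\mapsto x\otimes 1$), and the descent over $C=\ker s$ in the non-Hodgkin case — but the overall strategy and the results invoked are the same.
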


\begin{proof}
First assume that $G$ is a Hodgkin group, consider the $\mathscr{D}$-module $A=KK_T(A,B)$ and the $R(G)$-module $B=KK_G(A,B)$. By Theorem \ref{Rosenberg_result}, \[\mathscr{G}(B)=A.\]Hence, by Theorem \ref{Sjamaar_result}, \[B\cong\mathscr{F}(A)\cong\mathscr{J}(A)=A^{I(\mathscr{D})}.\]If $G$ is not a Hodgkin group, we use the same trick as in the proof of Theorem \ref{main theorem} to get our desired result.
\end{proof}

As a simple application of Theorem \ref{main theorem2}, in the case when $A$ is a $G$-$\mathrm{C}^\ast$-algebra and $B=\mathbb{C}$, we have the following corollary.

\begin{corollary} If $A$ is a $G$-$\mathrm{C}^\ast$-algebra, then \[K_G^0(A)\cong K_T^0(A)^{I(\mathscr{D})}.\]\end{corollary}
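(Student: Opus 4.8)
The plan is to derive the Corollary as a direct specialization of Theorem \ref{main theorem2} together with the standard identification of equivariant $K$-homology with a $KK$-group. First I would recall that for a $G$-$\mathrm{C}^\ast$-algebra $A$, the equivariant $K$-homology group $K_G^0(A)$ is \emph{by definition} (or by a well-known identification, see \cite{Kasparov1}, \cite{Kasparov2}) the Kasparov group $KK_G(A,\mathbb{C})$, and likewise $K_T^0(A)\cong KK_T(A,\mathbb{C})$. This reduces the Corollary to the assertion that $i^\ast\colon KK_G(A,\mathbb{C})\to KK_T(A,\mathbb{C})$ is an isomorphism onto $KK_T(A,\mathbb{C})^{I(\mathscr{D})}$.

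Next I would simply invoke Theorem \ref{main theorem2} with the second variable $B=\mathbb{C}$. Since $\mathbb{C}$ is trivially a $G$-$\mathrm{C}^\ast$-algebra, all hypotheses of Theorem \ref{main theorem2} are met, and it yields directly that $i^\ast$ restricts to an isomorphism $KK_G(A,\mathbb{C})\xrightarrow{\ \sim\ }KK_T(A,\mathbb{C})^{I(\mathscr{D})}$. Transporting this isomorphism through the identifications $KK_G(A,\mathbb{C})\cong K_G^0(A)$ and $KK_T(A,\mathbb{C})\cong K_T^0(A)$ gives the stated isomorphism $K_G^0(A)\cong K_T^0(A)^{I(\mathscr{D})}$, where the $\mathscr{D}$-module structure on $K_T^0(A)$ is the one furnished by Theorem \ref{main theorem} specialized to $B=\mathbb{C}$.

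There is essentially no obstacle here: the Corollary is a formal consequence, and the only point worth a sentence of care is making explicit that the $\mathscr{D}$-action on $K_T^0(A)$ coincides under the identification $K_T^0(A)\cong KK_T(A,\mathbb{C})$ with the $\mathscr{D}$-action constructed in Theorem \ref{main theorem}, so that the symbol $K_T^0(A)^{I(\mathscr{D})}$ is unambiguous. I would spell this out in one line and then conclude. If one wishes to avoid even invoking the identification of $K$-homology with a $KK$-group, one could alternatively just \emph{define} $K_G^0(A):=KK_G(A,\mathbb{C})$ in this context, in which case the Corollary is literally the $B=\mathbb{C}$ case of Theorem \ref{main theorem2} with no further argument.
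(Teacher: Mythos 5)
Your proposal is correct and coincides with the paper's own (unwritten) argument: the paper presents this Corollary as an immediate specialization of Theorem \ref{main theorem2} to $B=\mathbb{C}$, using the identification $K_G^0(A)\cong KK_G(A,\mathbb{C})$. Your extra remark that the $\mathscr{D}$-module structure on $K_T^0(A)$ is the one from Theorem \ref{main theorem} with $B=\mathbb{C}$ is accurate and harmless.
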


In particular, if $A=C(M)$ where $M$ is a compact $G$-manifold, then we have

\begin{corollary}Let $M$ be a compact $G$-manifold, then
\[K_0^G(M)\cong K_0^T(M)^{I(\mathscr{D})}.\]
\end{corollary}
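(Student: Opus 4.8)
The plan is to reduce Theorem \ref{main theorem2} to the purely algebraic statement of Theorem \ref{Sjamaar_result} via the two Wasserman-type results already established, and then handle the non-Hodgkin case by the covering trick. I would argue as follows. First suppose $G$ is a Hodgkin group. Set $A' = KK_T(A,B)$, regarded as a left $\mathscr{D}$-module via Theorem \ref{main theorem}, and $B' = KK_G(A,B)$, regarded as an $R(G)$-module. The content of Theorem \ref{Rosenberg_result} together with Theorem \ref{hecke algebra} (and its analogue $\sigma_\alpha = 1\otimes\delta_\alpha$) is precisely that the $R(T)$-module isomorphism $KK_G(A,B)\otimes_{R(G)}R(T)\cong KK_T(A,B)$, $x\otimes a\mapsto a\cdot i^\ast(x)$, is in fact an isomorphism of $\mathscr{D}$-modules, i.e. $\mathscr{G}(B') = A'$ in the notation of Theorem \ref{Sjamaar_result}. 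Since $\mathscr{G}$ is an equivalence with quasi-inverse $\mathscr{F}$, and $\mathscr{F}\cong\mathscr{J}$ where $\mathscr{J}(A') = (A')^{I(\mathscr{D})}$, we get $B'\cong\mathscr{F}(A')\cong (A')^{I(\mathscr{D})} = KK_T(A,B)^{I(\mathscr{D})}$. The key remaining point is to check that this chain of isomorphisms, when composed, is exactly the map $i^\ast$, i.e. that the unit/counit of the equivalence $(\mathscr{G},\mathscr{F})$ identifies $x\in KK_G(A,B)$ with $i^\ast(x)\in KK_T(A,B)^{I(\mathscr{D})}$; this follows by tracing the definition of $\mathscr{G}$ (which sends $x$ to $x\otimes 1\mapsto i^\ast(x)$) and noting that $i^\ast(x)$ does lie in the invariants because $\sigma_\alpha(i^\ast(x)) = i^\ast(x)$ for each $\alpha$ by Lemma \ref{rho} applied to $G_\alpha$, hence $\partial_\omega'(i^\ast(x)) = 0$ for $\omega\neq 1$.

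For the general case, choose a covering $s\colon\tilde G\to G$ with $\tilde G$ a Hodgkin group, with maximal torus $\tilde T = s^{-1}(T)$, so that by Lemma \ref{Lemma11} the pullback $s^\ast\colon KK_T(A,B)\to KK_{\tilde T}(A,B)$ is injective, and similarly $s^\ast\colon KK_G(A,B)\to KK_{\tilde G}(A,B)$ is injective (the latter by the same lemma applied with the groups in place of tori, or rather by the fact that $KK_{\tilde G}(A,B)\cong R(\tilde T)\otimes_{R(T)}KK_G(A,B)$ as was extracted in the proof of Lemma \ref{Lemma11}). The naturality relation $s^\ast\sigma_\alpha = \tilde\sigma_\alpha s^\ast$ of (\ref{natural}) and the injective algebra map $\overline s\colon\mathscr{D}\to\tilde{\mathscr{D}}$ show that $s^\ast$ is a map of $\mathscr{D}$-modules (with $\mathscr{D}$ acting on the target through $\overline s$), so $s^\ast$ carries $KK_T(A,B)^{I(\mathscr{D})}$ into $KK_{\tilde T}(A,B)^{I(\tilde{\mathscr{D}})}$. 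Now one chases the square relating $i^\ast$, $\tilde i^\ast$, and the two $s^\ast$'s, which commutes since both composites are the double-restriction $KK_G(A,B)\to KK_{\tilde T}(A,B)$; injectivity of $i^\ast$ is then immediate from injectivity of $\tilde i^\ast$ (the Hodgkin case) and of $s^\ast$. For surjectivity onto $KK_T(A,B)^{I(\mathscr{D})}$, given $y$ in this group, $s^\ast(y)\in KK_{\tilde T}(A,B)^{I(\tilde{\mathscr{D}})}$, so by the Hodgkin case $s^\ast(y) = \tilde i^\ast(\tilde x)$ for a unique $\tilde x\in KK_{\tilde G}(A,B)$; one then shows $\tilde x$ descends, i.e. lies in the image of $s^\ast\colon KK_G(A,B)\to KK_{\tilde G}(A,B)$, using that $KK_G(A,B)$ is the subgroup of $KK_{\tilde G}(A,B)$ on which the kernel of $s$ acts trivially (again reading off the splitting $KK_{\tilde G}(A,B)\cong\bigoplus_\lambda KK_G(A,B)$ from the proof of Lemma \ref{Lemma11}, adapted from tori to the groups), and that $y$ being genuinely $T$-equivariant forces the corresponding invariance of $\tilde x$.

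The main obstacle I expect is the bookkeeping in this last descent argument: verifying cleanly that the $\ker(s)$-isotypic decomposition of $KK_{\tilde G}(A,B)$ is compatible with the one for $KK_{\tilde T}(A,B)$ under $\tilde i^\ast$, so that $s^\ast(y)$ being in the trivial isotypic piece upstairs implies $\tilde x$ is too. Concretely, the functoriality of the character decomposition of Lemma \ref{Lemma11} with respect to the inclusion of the torus into the group needs to be spelled out, which requires that the construction of $i_{\alpha,!}$ (hence of $\sigma_\alpha$) be natural for the covering — this is exactly what (\ref{natural}) asserts, but one must be careful that it holds not just for the $\sigma_\alpha$ individually but for the whole $\mathscr{D}$-action, which is where $\overline s$ and Lemma 2.4 of \cite{Sjamaar} enter. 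Everything else is a formal consequence of Theorems \ref{Rosenberg_result}, \ref{hecke algebra}, \ref{Sjamaar_result} and Lemma \ref{Lemma11}, in close parallel with the corresponding deduction in \cite{Sjamaar}, and the stated corollaries follow at once by specializing $B = \mathbb{C}$ and then $A = C(M)$.
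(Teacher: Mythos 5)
Your proposal is correct and takes essentially the same route as the paper: the corollary follows directly from Theorem \ref{main theorem2} by specializing $A=C(M)$, $B=\mathbb{C}$ and identifying $KK_G(C(M),\mathbb{C})=K_0^G(M)$ and $KK_T(C(M),\mathbb{C})=K_0^T(M)$, which is exactly the one-line deduction the paper records. The bulk of your write-up re-derives Theorem \ref{main theorem2} itself, again in close parallel with the paper's own proof (Hodgkin case via Theorems \ref{Rosenberg_result}, \ref{hecke algebra} and \ref{Sjamaar_result}; non-Hodgkin case via the covering $\tilde G\to G$ and Lemma \ref{Lemma11}), so no new argument is actually needed for the corollary beyond the specialization you note at the end.
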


\subsection{The difference between $KK_T(A,B)^{I(\mathscr{D})}$ and $KK_T(A,B)^W$}\label{section2.6}

Note that if $A=B=\mathbb{C}$, Theorem \ref{main theorem2} gives the following result: \[R(G)\cong R(T)^{I(\mathscr{D})}.\]But the isomorphism $R(G)\cong R(T)^W$ implies that in the case of the character ring of $T$, $R(T)^W=R(T)^{I(\mathscr{D})}$. One may wonder whether this result generalizes to the equivariant $KK$-group for any $G$-$\mathrm{C}^\ast$-algebras $A$ and $B$. But an example given by McLeod \cite{McLeod} showed that it is far from being true for equivariant $K$-theory, let alone equivariant $KK$-theory. See also \cite{Sjamaar} for a generalization of a McLeod's example. In this subsection, we shall see that, under some restrictive conditions, $KK_T(A,B)^{I(\mathscr{D})}$ is isomorphic to $KK_T(A,B)^W$.

\vspace{3mm}

A \emph{compact Hamiltonian $G$-space} is a compact symplectic manifold $(M,\omega)$ on which $G$ acts by symplectomorphisms, together with a $G$-equivariant moment map $\phi:M\rightarrow \mathfrak{g}^\ast$ satisfying the Hamilton's equation: \[\langle d\phi,X\rangle=\iota_{X'} \omega, \mbox{  }\forall X\in\mathfrak{g}\]where $G$ acts on $\mathfrak{g}^\ast$ by the coadjoint action and $X'$ denotes the vector field on $M$ generated by $X\in\mathfrak{g}$.

If $M$ is a compact Hamiltonian $G$-manifold, then the restriction map $K_T(M)\rightarrow K_T(M^T)$ induced by $M^T\rightarrow M$ is injective by Theorem 2.5 in \cite{HHaradaLandweber}. Based on this result, it was shown in \cite {Sjamaar} that \begin{eqnarray}K_G(M)\cong K_T(M)^W.\label{symplecticcase}\end{eqnarray}

In \cite{Kasparov2}, Kasparov constructed a map $\tau\colon KK_G(C(M),\mathbb{C})\longrightarrow KK_G(\mathbb{C},C(M))$ for any even-dimensional compact $G$-manifold $M$ equipped with a $G$-equivariant $\mbox{spin}^c$-structure and used it to show that it is an isomorphism in $G$-equivariant $KK$-theory: \begin{eqnarray} KK_G(C(M),\mathbb{C})\cong KK_G(\mathbb{C},C(M)).\label{poincareduality}\end{eqnarray}It is called \emph{Poincar\'{e} duality} in equivariant $KK$-theory.

For a compact Hamiltonian $G$-manifold $M$ equipped with a $G$-equivariant symplectic form $\omega$, there is a $G$-equivariant almost complex structure naturally associated with $\omega$. It is canonical in the sense that it is unique up to homotopy. We obtain a $G$-equivariant $\mbox{spin}^c$-structure on $M$ by this equivariant almost complex structure. Thus, we can combine Kasparov's result (\ref{poincareduality}) with (\ref{symplecticcase}) to give the following corollary.

\begin{corollary}
If $M$ is a compact Hamiltonian $G$-manifold, then \[K^G_0(M)\cong K^T_0(M)^W\]where $K^G_0(M)$ is the $G$-equivariant $K$-homology of $M$.
\end{corollary}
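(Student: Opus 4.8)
The plan is to chain together the Poincaré duality isomorphism of Kasparov with the description of $G$-equivariant $K$-theory of a Hamiltonian space as the Weyl invariants in the $T$-equivariant $K$-theory. First I would recall that since $M$ is a compact Hamiltonian $G$-manifold, the $G$-equivariant symplectic form $\omega$ supplies a canonical (up to homotopy) $G$-equivariant almost complex structure on $M$, and hence a $G$-equivariant $\mathrm{spin}^c$-structure. The manifold $M$ is even-dimensional, so Kasparov's construction applies and we obtain the isomorphism
\[
KK_G(C(M),\mathbb{C})\cong KK_G(\mathbb{C},C(M)),
\]
that is, $K_0^G(M)\cong K^0_G(M)$. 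The same construction applied to the torus $T$ (the $\mathrm{spin}^c$-structure obtained from the restricted almost complex structure is $T$-equivariant) gives $K_0^T(M)\cong K^0_T(M)$, and this $T$-equivariant Poincaré duality is compatible with the Weyl group action coming from conjugation in the normalizer $N_G(T)$, since the almost complex structure is $G$-equivariant.

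Next I would invoke the identity \eqref{symplecticcase}, namely $K_G(M)\cong K_T(M)^W$ for compact Hamiltonian $G$-manifolds, which is exactly the statement proved in \cite{Sjamaar} on the basis of the injectivity of $K_T(M)\rightarrow K_T(M^T)$ from \cite{HHaradaLandweber}. Combining this with the two Poincaré duality isomorphisms from the previous paragraph yields
\[
K_0^G(M)\cong K^0_G(M)\cong K^0_T(M)^W\cong K_0^T(M)^W,
\]
where the last isomorphism uses that $T$-equivariant Poincaré duality intertwines the $W$-actions, so it carries the subgroup of $W$-invariants of $K^0_T(M)$ isomorphically onto the subgroup of $W$-invariants of $K_0^T(M)$. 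This is the desired conclusion.

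The main obstacle, and the only point requiring genuine care, is the $W$-equivariance of the $T$-equivariant Poincaré duality map $\tau_T\colon KK_T(C(M),\mathbb{C})\rightarrow KK_T(\mathbb{C},C(M))$. The Weyl group acts on $KK_T$-groups of a $G$-$\mathrm{C}^\ast$-algebra via the conjugation action of $N_G(T)/T$; one must check that Kasparov's duality map, built from the Dolbeault-type operator of the chosen $\mathrm{spin}^c$-structure, commutes with this action. This follows because the almost complex structure (and the resulting $\mathrm{spin}^c$-structure) was chosen to be $G$-equivariant, hence invariant under conjugation by $N_G(T)$ up to the canonical homotopy, and $\tau$ is natural with respect to equivariant $\mathrm{spin}^c$ isomorphisms; I would state this compatibility and refer to the naturality properties of the construction in \cite{Kasparov2}. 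The remaining steps — identifying the canonical equivariant almost complex structure, verifying even-dimensionality, and quoting \eqref{symplecticcase} and \eqref{poincareduality} — are routine given the results already assembled in this section.
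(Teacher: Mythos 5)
The proposal is correct and takes essentially the same approach as the paper: both chain Kasparov's Poincar\'e duality \eqref{poincareduality} with the Hamiltonian result \eqref{symplecticcase} to pass from $K^G_0(M)$ to $K^T_0(M)^W$. You go further than the paper's terse ``combine'' by explicitly flagging and justifying the $W$-equivariance of the $T$-equivariant Poincar\'e duality map, a compatibility the paper silently relies on.
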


Finally, we state some criteria for $KK_G(A,B)$ to be isomorphic to $KK_T(A,B)^W$. Recall that $\mbox{d}=\prod_{\alpha\in\mathscr{R}^+}(1-e^{-\alpha})\in R(T)$ is the Weyl denominator in (\ref{weylcharacter}).

\begin{lemma} \label{cheaplemma}
Assume that the Weyl denominator $\mbox{d}=\prod_{\alpha\in\mathscr{R}^+}(1-e^{-\alpha})\in R(T)$ is not a zero divisor in the $R(T)$-module $KK_T(A,B)$, then the map $i^\ast$ is an isomorphism from $KK_G(A,B)$ to $KK_T(A,B)^W$ where $i$ is the inclusion $T\rightarrow G$.
\end{lemma}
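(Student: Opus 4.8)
The plan is to reduce the claim to the identity $KK_T(A,B)^{I(\mathscr{D})} = KK_T(A,B)^W$ under the stated hypothesis, and then invoke Theorem~\ref{main theorem2}. By (\ref{inequality2}) we already have the inclusion $KK_T(A,B)^{I(\mathscr{D})} \subseteq KK_T(A,B)^W$, so the work is to produce the reverse inclusion. For this I would use Theorem~\ref{sjamaartheorem}(iii): an element $n \in KK_T(A,B)$ lies in $KK_T(A,B)^{I(\mathscr{D})}$ precisely when $\partial_\omega'(n) = 0$ for all $\omega \neq 1$. Since $\partial_\omega' = e^{-\rho}\partial_\omega e^{-\rho}$ and $e^{-\rho}$ is a unit in $R(T)$, this is equivalent to $\partial_\omega(n) = 0$ for all $\omega \neq 1$; and since the top operator $\partial_{\omega_0}$ factors as a composition ending (or beginning) with every $\partial_{s_\beta}$ through reduced words, the conditions $\partial_\omega(n)=0$ for all $\omega \ne 1$ are in turn governed by the single operators $\delta_\alpha = \sigma_\alpha$. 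So I want to show: if $n$ is $W$-invariant and $\mbox{d}$ is not a zero divisor on $KK_T(A,B)$, then $\sigma_\alpha(n) = n$ for every simple root $\alpha$, which forces $\partial_\omega'(n)=0$ for $\omega \neq 1$ and hence $n \in KK_T(A,B)^{I(\mathscr{D})}$.

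The key computation is the following. For a $W$-invariant element $n$, Demazure's formula (\ref{Demazureformula}) gives $\partial_{\omega_0}(n) = \mathrm{A}(n)/\mathrm{d}$, and for $W$-invariant $n$ one has $\mathrm{A}(n) = \mathrm{A}(1)\,n = \mathrm{d}\cdot n$ after the standard manipulation (the alternating sum $\sum_\omega (-1)^{l(\omega)} e^{-\rho}\omega(e^\rho)$ equals $\mathrm{d}$, and pulling the $W$-invariant factor $n$ out of $\omega(e^\rho u)$ is legitimate). Therefore $\mathrm{d}\cdot \partial_{\omega_0}(n) = \mathrm{d}\cdot n$, i.e.\ $\mathrm{d}\cdot(\sigma(n) - n) = 0$ since $\sigma = \partial_{\omega_0}$ acts as the top Demazure operator on $KK_T(A,B)$ by Theorem~\ref{hecke algebra} (in the Hodgkin case; in general by pulling back along a Hodgkin cover as in Theorem~\ref{main theorem} and using Lemma~\ref{Lemma11}). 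Because $\mathrm{d}$ is not a zero divisor on $KK_T(A,B)$, we conclude $\sigma(n) = n$, i.e.\ $n \in \mathrm{im}(\sigma)$. But $\mathrm{im}(\sigma) = \mathrm{im}(i^\ast \circ i_!) \subseteq \mathrm{im}(i^\ast)$, and by Theorem~\ref{main theorem2} $\mathrm{im}(i^\ast) = KK_T(A,B)^{I(\mathscr{D})}$; hence $n \in KK_T(A,B)^{I(\mathscr{D})}$, giving the reverse inclusion.

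Putting this together, $KK_T(A,B)^{I(\mathscr{D})} = KK_T(A,B)^W$, and Theorem~\ref{main theorem2} already identifies $i^\ast$ as an isomorphism from $KK_G(A,B)$ onto the left-hand side; composing, $i^\ast$ is an isomorphism onto $KK_T(A,B)^W$, which is the claim. I would also remark that for the ``$\mathrm{d}$ not a zero divisor'' hypothesis to be used cleanly one should check $\mathrm{d}$ acts compatibly under the identification $KK_T(A,B) \cong KK_G(A,B)\otimes_{R(G)} R(T)$, but this is automatic since the identification is $R(T)$-linear.

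The main obstacle I anticipate is the non-Hodgkin case: the cleanest argument for $\sigma=\partial_{\omega_0}$ on $KK_T(A,B)$ runs through Theorem~\ref{hecke algebra}, which assumes $G$ is Hodgkin. For general $G$ I would pass to a Hodgkin cover $s\colon \tilde G \to G$ with maximal torus $\tilde T = s^{-1}(T)$, use the injectivity of $s^\ast\colon KK_T(A,B)\hookrightarrow KK_{\tilde T}(A,B)$ from Lemma~\ref{Lemma11} together with the naturality relation $s^\ast\sigma = \tilde\sigma s^\ast$ (cf.\ (\ref{natural})), and note that $s^\ast$ sends $\mathrm{d}_T$ to $\mathrm{d}_{\tilde T}$ up to a unit — so the zero-divisor hypothesis and the $W$-invariance transfer across $s^\ast$, and the identity $\tilde\sigma(s^\ast n) = s^\ast n$ descends to $\sigma(n)=n$ by injectivity. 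A secondary point to be careful about is that $W(G,T) = W(\tilde G,\tilde T)$ so that ``$W$-invariant'' is unambiguous; this is standard for isogenies.
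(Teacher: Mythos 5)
Your argument is correct and takes essentially the same route the paper intends: the one-line proof cites the purely algebraic fact (Harada--Landweber--Sjamaar, Lemma~3.5) that $N^W = N^{I(\mathscr{D})}$ for any $\mathscr{D}$-module $N$ on which $\mathrm{d}$ acts as a non-zerodivisor, and your second paragraph reproves exactly that fact from the operator identity $\mathrm{d}\cdot\partial_{\omega_0}=\sum_{\omega\in W}(-1)^{l(\omega)}e^{\omega\rho-\rho}\,\omega$ in $\mathscr{D}$ (legitimate since $\omega\rho-\rho$ is always a sum of roots, hence in $\mathscr{X}(T)$) together with the Weyl denominator identity $\sum_\omega(-1)^{l(\omega)}e^{\omega\rho-\rho}=\mathrm{d}$, then feeds the conclusion through $\operatorname{im}(\sigma)\subseteq\operatorname{im}(i^\ast)=KK_T(A,B)^{I(\mathscr{D})}$ from Theorem~\ref{main theorem2}. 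Two small points worth cleaning up. First, the assertion in your opening paragraph that $\partial_\omega'(n)=0$ is \emph{equivalent} to $\partial_\omega(n)=0$ is not right: $\partial_\omega'$ is $\partial_\omega$ conjugated by multiplication by $e^{\rho}$, so the two vanishing conditions amount to applying $\partial_\omega$ to different elements ($e^{\rho}n$ versus $n$), not to the same one; moreover $\rho$ need not lie in $\mathscr{X}(T)$, so $e^{\pm\rho}$ need not even be an element of $R(T)$. Fortunately your second paragraph does not rely on that claim, and the route via $\sigma(n)=n$ and $\operatorname{im}(\sigma)\subseteq\operatorname{im}(i^\ast)$ is sound. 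Second, the Hodgkin-cover reduction in your closing paragraph is correct but unnecessary here: Theorem~\ref{main theorem} already furnishes the $\mathscr{D}$-module structure on $KK_T(A,B)$ for \emph{every} compact connected $G$, after which the purely algebraic computation of the second paragraph applies verbatim without revisiting the covering.
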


\begin{proof}
It follows immediately from Lemma 3.5 in \cite{Sjamaar}.
\end{proof}

The following corollary is immediate by Lemma \ref{cheaplemma}. It is a generalization of Theorem 4.4 in McLeod's paper \cite{McLeod}.

\begin{corollary}
If $KK_T(A,B)$ is a free module over $R(T)$, then \[KK_G(A,B)\cong KK_T(A,B)^W.\]
\end{corollary}

\section{Proof of Theorem \ref{Wasserman1}}\label{Wassermansection}

Theorem \ref{Wasserman1} is a version of Frobenius Reciprocity in equivariant $KK$-theory. As promised in Section \ref{section1} a proof will be provided here. We will only prove it for the case that $G$ is a compact group and $A$, $B$ are $G$-$\mathrm{C^\ast}$-algebras. 

Recall from Section \ref{section2} that if $A,B$ are $G$-$\mathrm{C}^\ast$-algebras, the we have the restriction map: \[res^G_T\colon KK_G(A,B)\rightarrow KK_T(A,B)\]which is defined by sending $x=[E,\phi,F]\in KK_G(A,B)$ to $x|_T=[E|_T,\phi |_T,F|_T]\in KK_T(A,B)$ where $E|_T$ is regarded as a $T$-Hilbert $B$-module. $\phi$ is regarded as a $T$-$\ast$ homomorphism and $F$ is regarded as a $T$-bounded operator in $\mathrm{B}(E|_T)$. To avoid notational confusion, we will also use the notations $Res^G_T E$, $Res^G_T F$, $Res^G_T \phi$ for $E|_T$, $F|_T$, $\phi |_T$ respectively.

On the other hand, if $M$ is a $T$-$\mathrm{C}^\ast$-algebra, then $Ind^G_T(M)$ is the $G$-$\mathrm{C}^\ast$-algebra of all continuous functions $f\colon G\rightarrow M$ such that $f(gh)=h^{-1}f(g),\forall g\in G,h\in T$ and such that $\parallel f\parallel$ vanishes at infinity. Since we are dealing with the case that $G/T$ is compact, the $\mathrm{C}^\ast$-norm of each element in $Ind^G_T(M)$ is just the maximum norm. The $G$-action on $Ind^G_T(M)$ is the left translation.

If $A$ is an $G$-$\mathrm{C}^\ast$-algebra, then $Ind^G_T(Res^G_T(A))$ is \emph{equivariantly isomorphic} to $A\otimes C(G/T)$. We denote the isomorphism from $Ind^G_T(Res^G_T(A))$ to $A\otimes C(G/T)$ by $\Phi$. More explicitly, if $F_A\in Ind^G_T(Res^G_T(A))$, then $\Phi(F_A)([g])=gF_A(g)$. The inverse map $\Phi^{-1}\colon A\otimes C(G/T)\rightarrow Ind^G_T(Res^G_T (A))$ is defined as follows: for $a\otimes f\in A\otimes C(G/T)$, $\Phi^{-1}(a\otimes f)(g)=f(g)g^{-1}a$. 

We are going to describe an \emph{induction} map from the $T$-equivariant $KK$-theory to the $G$-equivariant $KK$-theory  for any $G$-$\mathrm{C}^\ast$-algebras $A,B$.

Let $E$ is an $T$-Hilbert $B$-module, define $\tilde{E}:= Ind^G_T E$ by \[Ind^G_T E=\{f_E\colon G\rightarrow E\mid f(gt)=t^{-1}f(g)\}.\]It has an $Ind^G_T B$-valued inner product defined by \[\langle f_E,f'_E\rangle(g):=\langle f_E(g),f'_E(g)\rangle\] for any $f_E,f'_E\in Ind^G_T(E)$ and $g\in G$.

\begin{lemma}
$\tilde{E}$ is an $G$-Hilbert $Ind^G_T B$-module. 
\end{lemma}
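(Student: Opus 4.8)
The plan is to verify the Hilbert module axioms for $\tilde{E} = Ind^G_T E$ directly from the definitions, checking in turn: (1) that the $Ind^G_T B$-valued pairing is well-defined, i.e.\ that $\langle f_E, f'_E\rangle$ actually lies in $Ind^G_T B$; (2) that it is a genuine $Ind^G_T B$-valued inner product (sesquilinearity, positivity, non-degeneracy) and is compatible with the right $Ind^G_T B$-module structure; (3) that $\tilde{E}$ is complete in the induced norm; and (4) that the left $G$-action by left translation is a well-defined continuous action by isometries compatible with the module and inner-product structure, so that $\tilde{E}$ is a $G$-Hilbert $Ind^G_T B$-module.

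For step (1), given $f_E, f'_E \in Ind^G_T E$, one checks the equivariance condition $\langle f_E, f'_E\rangle(gt) = t^{-1}\langle f_E, f'_E\rangle(g)$: using $f_E(gt) = t^{-1}f_E(g)$ and the $T$-equivariance of the $B$-valued inner product on $E$ (that is, $\langle t^{-1}x, t^{-1}y\rangle = t^{-1}\langle x, y\rangle$), both arguments pull out a $t^{-1}$ and the pairing on $E$ being conjugate-linear in the first variable and the $T$-action being by automorphisms gives the result. Continuity of $g \mapsto \langle f_E(g), f'_E(g)\rangle$ follows from joint continuity of the inner product together with continuity of $f_E, f'_E$; since $G/T$ is compact the norm is the sup norm and vanishing at infinity is automatic. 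The right $Ind^G_T B$-module structure is $(f_E \cdot b)(g) = f_E(g)\, b(g)$ for $b \in Ind^G_T B$, and one checks this again lands in $Ind^G_T E$ using $f_E(gt) = t^{-1}f_E(g)$ and $b(gt) = t^{-1}b(g)$ together with the fact that the $T$-action on $E$ is $B$-linear over the $T$-action on $B$. Sesquilinearity and the identity $\langle f_E, f'_E \cdot b\rangle = \langle f_E, f'_E\rangle\, b$ are pointwise consequences of the corresponding properties on $E$. Positivity: $\langle f_E, f_E\rangle(g) = \langle f_E(g), f_E(g)\rangle \geq 0$ in $B$ for every $g$, hence $\langle f_E, f_E\rangle \geq 0$ in $Ind^G_T B$ (positivity in a $C^\ast$-algebra of functions is pointwise); and if $\langle f_E, f_E\rangle = 0$ then $\langle f_E(g), f_E(g)\rangle = 0$ for all $g$, so $f_E(g) = 0$ for all $g$ by non-degeneracy on $E$, whence $f_E = 0$.

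For step (3), the norm on $\tilde{E}$ is $\|f_E\| = \sup_{g\in G}\|f_E(g)\|_E = \||\langle f_E, f_E\rangle|\|^{1/2}$; a Cauchy sequence $(f_E^{(n)})$ is uniformly Cauchy in $g$ (using compactness of $G$, or just of $G/T$) so converges uniformly to a continuous limit $f_E\colon G \to E$, and the equivariance relation passes to the limit, so $f_E \in Ind^G_T E$. For step (4), the $G$-action is $(g_0 \cdot f_E)(g) = f_E(g_0^{-1} g)$; this preserves the equivariance condition, is $Ind^G_T B$-semilinear over the left-translation $G$-action on $Ind^G_T B$, preserves the inner product in the appropriate equivariant sense, and is strongly continuous because $f_E$ is (uniformly) continuous on the compact group $G$ — standard for induced representations. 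I do not expect any genuine obstacle here: the statement is a bookkeeping lemma and every clause reduces pointwise to a known property of $E$ as a $T$-Hilbert $B$-module. The only place requiring a little care — the "main obstacle" in a very mild sense — is keeping the $T$-equivariance twists consistent between $E$, $B$, and their inner products throughout steps (1)–(2), and invoking compactness of $G/T$ at the right moments to identify the $C^\ast$-norm on $Ind^G_T B$ with the sup norm and to get completeness and continuity of the $G$-action for free.
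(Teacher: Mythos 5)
Your proof is correct and follows essentially the same direct, pointwise verification as the paper: in both cases the key checks are that $f_E f_B$ and $\langle f_E, f'_E\rangle$ satisfy the $T$-equivariance condition $h(gt) = t^{-1}h(g)$, that the pairing is compatible with the right module action, and that left translation gives the $G$-structure. You are somewhat more thorough than the paper (which states the remaining axioms, including completeness, are "easily verified"), but there is no difference of method.
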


\begin{proof}
 For $f_B\in Ind^G_T(B)$ and $f_E\in Ind^G_T(E)$, we have 
\begin{eqnarray}
(f_Ef_B)(gt)&=&f_E(gt)f_B(gt)                   \nonumber\\
            &=&(t^{-1}f_E(g))(t^{-1}f_B(g))     \nonumber\\
            &=&t^{-1}(f_E(g)f_B(g))             \nonumber\\
            &=&t^{-1}(f_E f_B)(g).               \nonumber   
\end{eqnarray}Hence $f_E f_B\in Ind^G_T(E)$. Moreover,
\begin{eqnarray}
\langle f_E,f'_E\rangle(gt)&=&\langle f_E(gt),f'_E(gt)\rangle              \nonumber\\
              &=&\langle t^{-1}f_E(g),t^{-1}f'_E(g)\rangle    \nonumber\\
              &=&t^{-1}\langle f_E(g),f'_E(g)\rangle          \nonumber\\
              &=&t^{-1}(\langle f_E,f'_E\rangle (g)).           \nonumber
\end{eqnarray}Hence, $\langle f_E,f'_E\rangle \in Ind^G_T(B)$. It is easy to check that $\langle f_E,f'_Ef_B\rangle=\langle f_E,f'_E\rangle f_B$ and other properties of the Hilbert $Ind^G_T B$-module are easily verified. The $G$-action on $Ind^G_T(E)$ is the left translation for all $f_E\in Ind^G_T(E)$. Then \begin{eqnarray}
g\langle f_E,f'_E\rangle(x)&=&\langle f_E,f'_E\rangle (g^{-1}x)         \nonumber\\
              &=&\langle f_E(g^{-1}x),f'_E(g^{-1}x)\rangle\nonumber\\
              &=&\langle gf_E(x),gf'_E(x)\rangle.          \nonumber
\end{eqnarray}Similarly, other properties of the $G$-Hilbert module structure are easily verified.\end{proof}

If $\phi\colon A\rightarrow\mathrm{B}(E)$ a $T$-$\ast$-homomorphism, define  $\tilde{\phi}:= Ind^G_T \phi\colon Ind^G_T A\rightarrow\mathrm{B}(Ind^G_T E)$ by \[\tilde{\phi}(f_A)(f_E)(g)\colon =\phi(f_A(g))(f_E(g))\]for all $g\in G,f_A\in Ind^G_T A,f_E\in Ind^G_T E$.

\begin{lemma}
$\tilde{\phi}$ is a well-defined $G$-$\ast$-homomorphism.
\end{lemma}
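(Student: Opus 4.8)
\emph{Proof proposal.} The plan is to check, in order, four things: (i) that for each $f_A\in Ind^G_T A$ and $f_E\in Ind^G_T E$ the function $g\mapsto\phi(f_A(g))(f_E(g))$ actually lies in $Ind^G_T E$; (ii) that $\tilde{\phi}(f_A)$ is an adjointable (hence bounded) operator on $Ind^G_T E$, with adjoint $\tilde{\phi}(f_A^\ast)$, so that $\tilde{\phi}$ genuinely maps into $\mathrm{B}(Ind^G_T E)$; (iii) that $\tilde{\phi}$ is linear, multiplicative and $\ast$-preserving; and (iv) that $\tilde{\phi}$ intertwines the left-translation $G$-actions on $Ind^G_T A$ and on $\mathrm{B}(Ind^G_T E)$.

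For (i), fix $t\in T$ and compute $\tilde{\phi}(f_A)(f_E)(gt)=\phi(f_A(gt))(f_E(gt))=\phi(t^{-1}f_A(g))(t^{-1}f_E(g))$, using the defining covariance of $Ind^G_T A$ and $Ind^G_T E$. Since $\phi$ is a $T$-$\ast$-homomorphism, $\phi(t^{-1}a)(\xi)=t^{-1}\bigl(\phi(a)(t\xi)\bigr)$ for all $a\in A$ and $\xi\in E$; applying this with $a=f_A(g)$ and $\xi=t^{-1}f_E(g)$ yields $\tilde{\phi}(f_A)(f_E)(gt)=t^{-1}\bigl(\phi(f_A(g))(f_E(g))\bigr)=t^{-1}\bigl(\tilde{\phi}(f_A)(f_E)(g)\bigr)$, which is precisely the required equivariance; continuity (and vanishing at infinity, which is vacuous here as $G/T$ is compact) follows from joint continuity of $\phi$, $f_A$ and $f_E$. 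This is the only step that uses the $T$-equivariance of $\phi$ in an essential way, and it is the main point of the proof; everything else is formal.

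For (ii), $Ind^G_T B$-linearity of $\tilde{\phi}(f_A)$ is immediate pointwise from the fact that each $\phi(a)$ is a $B$-module map on $E$: $\tilde{\phi}(f_A)(f_Ef_B)(g)=\phi(f_A(g))(f_E(g))f_B(g)=\bigl(\tilde{\phi}(f_A)(f_E)f_B\bigr)(g)$. For adjointability, set $f_A^\ast(g):=(f_A(g))^\ast$ and check, using $\phi(a)^\ast=\phi(a^\ast)$, that $\langle\tilde{\phi}(f_A)f_E,f'_E\rangle(g)=\langle\phi(f_A(g))f_E(g),f'_E(g)\rangle=\langle f_E(g),\phi(f_A(g)^\ast)f'_E(g)\rangle=\langle f_E,\tilde{\phi}(f_A^\ast)f'_E\rangle(g)$; boundedness then follows, for instance from the pointwise estimate $\langle\tilde{\phi}(f_A)f_E,\tilde{\phi}(f_A)f_E\rangle(g)\le\lVert f_A\rVert^2\,\langle f_E,f_E\rangle(g)$ in the order of the $\mathrm{C}^\ast$-algebra $B$. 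Part (iii) is a pointwise verification: linearity is clear, and multiplicativity follows from $\tilde{\phi}(f_Af'_A)(f_E)(g)=\phi(f_A(g)f'_A(g))(f_E(g))=\phi(f_A(g))\bigl(\phi(f'_A(g))(f_E(g))\bigr)=\tilde{\phi}(f_A)\bigl(\tilde{\phi}(f'_A)(f_E)\bigr)(g)$, while $\ast$-preservation was already established in (ii); if $\phi$ respects the gradings, then so does $\tilde{\phi}$, by the same pointwise check.

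Finally, for (iv), recall that $G$ acts on $Ind^G_T A$ and $Ind^G_T E$ by left translation and on $\mathrm{B}(Ind^G_T E)$ by $(g.S)(f_E)=g.\bigl(S(g^{-1}.f_E)\bigr)$. Then $\tilde{\phi}(g.f_A)(f_E)(x)=\phi\bigl((g.f_A)(x)\bigr)(f_E(x))=\phi\bigl(f_A(g^{-1}x)\bigr)(f_E(x))$, while $\bigl(g.\tilde{\phi}(f_A)\bigr)(f_E)(x)=\bigl(\tilde{\phi}(f_A)(g^{-1}.f_E)\bigr)(g^{-1}x)=\phi\bigl(f_A(g^{-1}x)\bigr)\bigl((g^{-1}.f_E)(g^{-1}x)\bigr)=\phi\bigl(f_A(g^{-1}x)\bigr)(f_E(x))$, since $(g^{-1}.f_E)(g^{-1}x)=f_E(x)$. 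The two expressions agree, so $\tilde{\phi}(g.f_A)=g.\tilde{\phi}(f_A)$, and combining (i)--(iv) gives that $\tilde{\phi}$ is a well-defined $G$-$\ast$-homomorphism. I expect step (i) to be the only place requiring care; all remaining verifications are routine pointwise computations.
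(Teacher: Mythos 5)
Your proof is correct and takes essentially the same route as the paper's: both rely on the $T$-equivariance identity $\phi(t^{-1}a)=t^{-1}\phi(a)\,t$ to verify that $\tilde{\phi}(f_A)(f_E)$ lands in $Ind^G_T E$, then establish boundedness/adjointability pointwise and check $G$-equivariance of $\tilde{\phi}$ by the same conjugation computation. You are somewhat more explicit than the paper about the adjoint, multiplicativity, and the boundedness estimate (which the paper dispatches with "it is straightforward" and "it is readily checked"), but there is no difference in method.
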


\begin{proof}
First of all, we need to check that it is well-defined:
\begin{eqnarray}
\tilde{\phi}(f_A)(f_E)(gt)&=&\phi(f_A(gt))(f_E(gt))\nonumber\\
                          &=&\phi(t^{-1}f_A(g))(t^{-1}f_E(g))\nonumber\\
                          &=&(t^{-1}\phi(f_A(g))t)(t^{-1}f_E(g))\nonumber\\
                          &=&t^{-1}\phi(f_A(g))(f_E(g))\nonumber\\
                          &=&t^{-1}\tilde{\phi}(f_A)(f_E)(g).\nonumber
\end{eqnarray}So $\tilde{\phi}(f_A)(f_E)\in Ind^G_T(E)$. And
\begin{eqnarray}
\parallel\tilde{\phi}(f_A)(f_E)(g)\parallel^2&=&\parallel\phi(f_A(g))(f_E(g))\parallel^2\nonumber\\
   &\leq&\parallel\phi(f_A(g))\parallel^2 \parallel f_E(g)\parallel^2\nonumber\\
   &\leq&\parallel\tilde{\phi}(f_A)\parallel^2 \parallel f_E\parallel^2.\nonumber
\end{eqnarray}Hence, $\tilde{\phi}(f_A)\in\mathrm{B}(Ind^G_T(E))$. It is straightforward to see that $\tilde{\phi}(f_A)^\ast$ exists and $\tilde{\phi}(f_A)^\ast\in\mathrm{B}(Ind^G_T(E))$. It is readily checked that $\tilde{\phi}$ is a $G$-$\ast$-homomorphism:
\begin{eqnarray}
(g\tilde{\phi}(f_A)g^{-1})(f_E)(x)&=&g\tilde{\phi}(f_A)(g^{-1}f_E)(x)\nonumber\\
   &=&g\phi(f_A(x))(g^{-1}f_E(x))\nonumber\\
   &=&g\phi(f_A(x))(f_E(gx))\nonumber\\
   &=&g\phi(gf_A(gx))(f_E(gx))\nonumber\\
   &=&g\tilde{\phi}(gf_A)(f_E)(gx)\nonumber\\
   &=&\tilde{\phi}(gf_A)(f_E)(x).\nonumber
\end{eqnarray}Hence, $(g\tilde{\phi}(f_A)g^{-1})(f_E)=\tilde{\phi}(gf_A)(f_E)$. 
\end{proof}

Let $F\in\mathrm{B}(E)$ where $E$ is a $T$-Hilbert $B$-module. We construct $\tilde{F}\in\mathrm{B}(Ind^G_T(E))$ as follows:\[\tilde{F}(f_E)(g):=F(f_E(g)).\]

\begin{lemma}
$\tilde{F}$ is a well-defined operator on the Hilbert $Ind^G_T B$-module map $E$. $\tilde{F}$ is $G$-invariant.
\end{lemma}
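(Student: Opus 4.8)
The plan is to verify in turn the three assertions bundled into the statement: that $\tilde F$ maps $\tilde E = Ind^G_T E$ into itself, that it is a bounded adjointable $Ind^G_T B$-module map on $\tilde E$ (so that indeed $\tilde F \in \mathrm{B}(Ind^G_T E)$), and that it is $G$-invariant. Exactly as in the two preceding lemmas, I will use that $F$ commutes with the $T$-action on $E$; for a compact torus this costs nothing, since averaging $F$ over $T$ replaces it by a $T$-invariant operator without changing the $KK$-class or disturbing the Kasparov-module conditions.

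First I would check well-definedness. Given $f_E \in Ind^G_T E$, $g \in G$ and $t \in T$, the pointwise definition yields
\[\tilde F(f_E)(gt) = F(f_E(gt)) = F\bigl(t^{-1}f_E(g)\bigr) = t^{-1}F\bigl(f_E(g)\bigr) = t^{-1}\,\tilde F(f_E)(g),\]
the third step being $T$-equivariance of $F$; together with continuity of $g \mapsto F(f_E(g))$ and the bound $\|F(f_E(g))\| \le \|F\|\,\|f_E(g)\|$ this shows $\tilde F(f_E) \in Ind^G_T E$. Linearity of $\tilde F$ is immediate.

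Next I would record boundedness, adjointability and $Ind^G_T B$-linearity. Taking the supremum over $g$ in the estimate above gives $\|\tilde F(f_E)\| \le \|F\|\,\|f_E\|$ for the (sup-)norm on $Ind^G_T E$. For the adjoint I would exhibit $\widetilde{F^\ast}$, which is well-defined by the previous paragraph applied to $F^\ast$, and compute, for $f_E, f'_E \in Ind^G_T E$ and all $g$,
\[\langle \tilde F(f_E),\,f'_E\rangle(g) = \langle F(f_E(g)),\,f'_E(g)\rangle = \langle f_E(g),\,F^\ast(f'_E(g))\rangle = \langle f_E,\,\widetilde{F^\ast}(f'_E)\rangle(g),\]
so $\widetilde{F^\ast}$ is an adjoint of $\tilde F$ and hence $\tilde F \in \mathrm{B}(Ind^G_T E)$. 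That $\tilde F$ is $Ind^G_T B$-linear follows from $B$-linearity of $F$ applied pointwise: $\tilde F(f_E f_B)(g) = F\bigl(f_E(g)f_B(g)\bigr) = F(f_E(g))\,f_B(g) = \bigl(\tilde F(f_E)f_B\bigr)(g)$.

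Finally, for $G$-invariance I would conjugate $\tilde F$ by the left-translation action and observe that this action only reparametrises the source. With the convention $(h\cdot f_E)(y) = f_E(h^{-1}y)$, so that $(g^{-1}\cdot f_E)(g^{-1}x) = f_E(x)$, I compute for $g, x \in G$
\[(g\cdot\tilde F\cdot g^{-1})(f_E)(x) = \bigl(\tilde F(g^{-1}\cdot f_E)\bigr)(g^{-1}x) = F\bigl((g^{-1}\cdot f_E)(g^{-1}x)\bigr) = F\bigl(f_E(x)\bigr) = \tilde F(f_E)(x),\]
whence $g\cdot\tilde F\cdot g^{-1} = \tilde F$ for all $g \in G$, which is the claimed invariance. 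I do not expect a genuine obstacle here: the only delicate point is that the pointwise formula respects the relation $f_E(gt) = t^{-1}f_E(g)$ only because $F$ has been taken $T$-equivariant, and once that is arranged the $G$-invariance is automatic, precisely because the $G$-action on $Ind^G_T E$ moves the argument of $f_E$ while leaving untouched its values, on which $F$ acts.
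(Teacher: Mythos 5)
Your proof is correct and follows essentially the same route as the paper: verify pointwise that $\tilde F(f_E)$ respects the relation $f_E(gt)=t^{-1}f_E(g)$, read off boundedness from the sup norm, exhibit $\widetilde{F^\ast}$ as the adjoint, and show $g\tilde F g^{-1}=\tilde F$ by observing that the left-translation action merely reparametrises the argument of $f_E$. One small but worthwhile clarification you make that the paper glosses over: the well-definedness step genuinely requires $F$ to commute with the $T$-action (the paper's displayed chain $F(t^{-1}f_E(g))=t^{-1}F(f_E(g))t$ is silently invoking this), and you correctly note that this can always be arranged by averaging $F$ over $T$ without changing the $KK$-class.
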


\begin{proof} 
\begin{eqnarray}
\tilde{F}(f_E)(gt)&=&F(f_E(gt))=F(t^{-1}f_E(g))\nonumber\\
    &=&t^{-1}F(f_E(g))t=t^{-1}.F(f_E(g))\nonumber\\
    &=&t^{-1}.\tilde{F}(f_E)(g).\nonumber
\end{eqnarray}So, $\tilde{F}(f_E)\in Ind^G_T E$.
\begin{eqnarray}
\tilde{F}(f_Ef_B)(g)&=&F(f_E f_B(g))=F(f_E(g) f_B(g))\nonumber\\
   &=&F(f_E(g))f_B(g)=\tilde{F}(f_E)(f_B)(g),\nonumber
\end{eqnarray}i.e. $\tilde{F}(f_E f_B)=\tilde{F}(f_E)f_B$. Hence, $\tilde{F}$ is an $Ind^G_T B$-module map. 
\begin{eqnarray}
\parallel\tilde{F}(f_E)\parallel_{Ind^G_T E}&=&\sup\parallel\tilde F(f_E)(g)\parallel=\sup\parallel F(f_E(g))\parallel\nonumber\\
  &\leq&\sup\parallel F\parallel\parallel f_E(g)\parallel\nonumber\\
  &=&\parallel F\parallel\sup\parallel f_E(g)\parallel\nonumber\\
  &=&\parallel F\parallel \parallel f_E\parallel.\nonumber
\end{eqnarray}So, $\tilde{F}\in\mathrm{B}(Ind^G_T E)$. Define $\tilde{F^\ast}(f_E)(g):=F^\ast(f_E(g))$.
\begin{eqnarray}
\langle\tilde{F}(f_E),f'_E\rangle(g)&=&\langle\tilde{F}(f_E)(g),f'_E(g)\rangle\nonumber \\ 
  &=&\langle F(f_E(g)),f'_E(g)\rangle\nonumber\\
  &=&\langle f_E(g),F^\ast (f'_E(g))\rangle\nonumber\\
  &=&\langle f_E,\tilde{F^\ast}(f'_E)\rangle(g).\nonumber
\end{eqnarray}So, $\tilde{F^\ast}=\tilde{F}^\ast$. $\tilde{F}$ is also $G$-continuous. i.e. $g\mapsto g.\tilde{F}$ is continuous in the norm topology.
\begin{eqnarray}
g.\tilde{F}(f_E)(x)&=&g\tilde{F}g^{-1}(f_E)(x)\nonumber\\
  &=&\tilde{F}(g^{-1}f_E)(g^{-1}x)\nonumber\\
  &=&F(g^{-1}f_E(g^{-1}x))\nonumber\\
  &=& F(f_E(x))\nonumber\\
  &=& \tilde{F}(f_E)(x).\nonumber
\end{eqnarray}So, $\tilde{F}$ is indeed $G$-invariant. 
\end{proof}

The \emph{induction homomorphism} \[ind^G_T\colon KK_T(A,B)\rightarrow KK_G(Ind^G_T(A),Ind^G_T(B))\] is defined by sending $x=[E,\phi,F]\in KK_T(A,B)$ to $ind^G_T(x)=[\tilde{E},\tilde{\phi},\tilde{F}]\in KK_G(Ind^G_T A,Ind^G_T B)$. It is clear that it is well-defined.

We give a proof of Theorem \ref{Wasserman1} now.

\vspace{5mm}

\emph{Proof of Theorem \ref{Wasserman1}:}
Let $x=[E,\phi,F]\in KK_T(A,B)$ and $i^\ast(ind^G_T(x))=[\tilde{E},\tilde{\phi}\circ i,\tilde{F}]$ where $\tilde{\phi}\circ i\colon A\rightarrow\mathrm{B}(\tilde{E})$. For $a\in A$, define $K_a\in Ind^G_T(A)$ by $K_a(g)=g^{-1}a$. Note that the $G$-action on $K_a$ gives $g.K_a=K_{ga}$. Under the isomorphism between $A\otimes C(G/T)$ and $Ind^G_T(A)$, we can identify $a\otimes 1\in A\otimes C(G/T)$ with $K_a\in Ind^G_T(A)$ for each $a\in A$.
\begin{eqnarray}
(\tilde{\phi}\circ i)(a)(f_E)(g)&=&\tilde{\phi}(K_a)(f_E)(g)\nonumber\\
  &=&\phi(K_a(g))(f_E(g))\nonumber\\
  &=&\phi(g^{-1}a)(f_E(g)).\nonumber
\end{eqnarray}And $res^G_T\circ i^\ast\circ ind^G_T(x)=[\tilde{E}\mid_T,(\tilde{\phi}\circ i)\mid_T,\tilde{F}\mid_T]$ 

For a $G$-$\ast$-homomorphism $f\colon B\rightarrow D$, the pushforward $f_\ast\colon KK_G(A,B)\rightarrow KK_G(A,D)$ is, by definition, $[M,\xi,N]\mapsto [M\otimes_f D,\xi\otimes id_D,N\otimes id_D]$ where $M\otimes_f D$ is the internal tensor product of the $G$-Hilbert $B$-module $M$ with $D$, viewed as a Hilbert $D$-module. For $x=[E,\phi,F]\in KK_T(A,B)$, we have
\[ev_\ast \circ res^G_T \circ i^\ast \circ ind^G_T(x)=[res^G_T(\tilde{E})\otimes_{ev} B,(res^G_T(\tilde{\phi}\circ i^\ast))\otimes id_B,res^G_T(\tilde{F})\otimes id_B]\]which is an element in $KK_T(A,B)$. $res^G_T(\tilde{E})$ is a $T$-Hilbert $B\otimes C(G/T)$-module, $res^G_T(\tilde{E})\otimes_{ev}B$ is then a $T$-Hilbert $B$-module, where $ev\colon B\otimes C(G/T)\rightarrow B$ is the evaluation at identity. For $f_E,f'_E\in res^G_T(\tilde{E})$, $b_1,b_2\in B$,
\begin{eqnarray}
\langle f_E\otimes b_1,f'_E\otimes b_2\rangle_{res^G_T(\tilde{E})\otimes_{ev}B}&=& b^\ast_1 ev(\langle f_E,f'_E\rangle)b_2\nonumber\\
  &=&b^\ast_1 \langle f_E,f'_E\rangle(1)b_2\nonumber\\
  &=&b^\ast_1 \langle f_E(1),f'_E(1)\rangle b_2\nonumber\\
  &=&\langle f_E(1)b_1,f'_E(1)b_2\rangle.\nonumber
\end{eqnarray}Our goal is to show that $x=ev_\ast\circ res^G_T\circ i^\ast\circ \iota^G_T(x)\in KK_T(A,B)$.

\vspace{3mm}

\emph{Claim}: $\tilde{E}\otimes_{ev}B$ is isomorphic to $E$ as $T$-Hilbert $B$-modules, i.e. $res^G_T(\tilde{E})\otimes_{ev}B\cong E$.

\emph{Proof of claim}: Define $Q\colon res^G_T(\tilde{E})\otimes_{ev}B\rightarrow E$ by $f_E\otimes b\mapsto f_E(1)b$. 
\begin{eqnarray}
Q((f_E\otimes b)b_1)&=&Q(f_E\otimes bb_1)=f_E(1)bb_1=(f_E(1)b)b_1\nonumber\\
                    &=&Q(f_E\otimes b)b_1,\nonumber
\end{eqnarray}
\begin{eqnarray}
Q(t(f_E\otimes b))&=&Q(tf_E\otimes tb)=(tf_E(1))(t(b))=t(f_E(1)b)\nonumber\\
                  &=&tQ(f_E\otimes b).\nonumber
\end{eqnarray}Hence, $Q$ is a $T$-Hilbert $B$-module map. Since $G$ is compact, for each $x\in E$, we can choose a constant function $f_x\colon G\rightarrow E$ in $\tilde{E}$ defined by $f_x(g)=x$ for all $g\in G$. Then $Q(f_x\otimes b)=xb$ for all $b\in B$. So $Q$ is surjective. Notice that \[\langle f'_E\otimes b_1,f''_E\otimes b_2\rangle=\langle f'_E(1)b_1,f''_E(1)b_2\rangle\] \[\langle Q(f'_E\otimes b_1),Q(f''_E\otimes b_2)\rangle=\langle f'_E(1)b_1,f''_E(1)b_2\rangle.\]So, $Q$ is isometric. Hence $Q$ is an isomorphism between $\tilde{E}\otimes_{ev}B$ and $E$ as $T$-Hilbert $B$-modules.

\vspace{3mm}

\emph{Claim}: For any $a\in A$, $b\in B$, the following diagram is commutative:
\[\begin{CD}
res^G_T(\tilde{E})\otimes_{ev}B@>{(res^G_T(\tilde{\phi}\circ i)\otimes id_B)(a\otimes b)}>>res^G_T(\tilde{E})\otimes_{ev}B\\
@VV {Q} V    @VV {Q} V\\
E@> {\phi(a)}>> E\\
\end{CD}\]

\emph{Proof of claim}: For any $f_E\otimes b\in res^G_T(\tilde{E})\otimes_{ev}B$, 
\begin{eqnarray}
Q((res^G_T(\tilde{\phi}\circ i)\otimes id_B)(a \otimes b)(f_E\otimes b))&=&Q(res^G_T(\tilde{\phi}\circ i)(a)(f_E)\otimes id_B(b))\nonumber\\
   &=&(res^G_T(\tilde{\phi}\circ i)(a))(f_E)(1)b\nonumber\\
   &=&\phi(K_a(1))(f_E(1))b\nonumber\\
   &=&\phi(a)(f_E(1))b.\nonumber
\end{eqnarray}
And
\begin{eqnarray}
\phi(a)(Q(f_E\otimes b))=\phi(a)(f_E(1)b)=\phi(a)(f_E(1))b.\nonumber
\end{eqnarray}So the claim is proved.

\vspace{3mm}

\emph{Claim}: The following diagram is commutative:
\[\begin{CD}
res^G_T(\tilde{E})\otimes_{ev}B@>{res^G_T(\tilde{F})\otimes id_B}>>res^G_T(\tilde{E})\otimes_{ev}B\\
@VV {Q} V    @VV {Q} V\\
E@> {F}>> E\\
\end{CD}\]

\emph{Proof of claim}: For any $f_E\otimes b\in res^G_T(\tilde{E})\otimes_{ev}B$, 
\begin{eqnarray}
Q((res^G_T\tilde{F})\otimes id_B)(f_E\otimes b)&=&Q(res^G_T\tilde{F}(f_E)\otimes id_B (b))\nonumber\\
   &=&\tilde{F}(f_E)(1)b\nonumber\\
   &=&F(f_E(1))b.\nonumber
\end{eqnarray}And
\begin{eqnarray}
F(Q(f_E\otimes b))=F(f_E(1)b)=F(f_E(1))b.\nonumber
\end{eqnarray}The claim is proved. We have shown that $x=ev_\ast \circ res^G_T\circ i^\ast \circ ind^G_T(x)\in KK_T(A,B)$. 

\vspace{3mm}

On the other hand, take any $y=[V,\psi,W]\in KK_G(A,B\otimes C(G/T))$. By Prop.20.2.4 in \cite{Blackadar}, we can assume that $W$ is $G$-invariant. $V$ is a $G$-Hilbert $B\otimes C(G/T)$-module. $Ind^G_T(res^G_T(V)\otimes_{ev}B)$ is a $G$-Hilbert $B\otimes C(G/T)$-module.

\vspace{3mm}

\emph{Claim}: $V$ is isomorphic to $Ind^G_T(res^G_T(V)\otimes_{ev}B)$ as $G$-Hilbert $B\otimes C(G/T)$-modules. 

\emph{Proof of claim}: Define $\Phi\colon V \rightarrow Ind^G_T(res^G_T(V)\otimes_{ev}B)$ by $\Phi(ef_B)(g)=g^{-1}e\otimes f_B(g)$ for any $e\in V,f_B\in Ind^G_T(B)\cong B\otimes C(G/T),g\in G$. Then
\begin{eqnarray}
\parallel\Phi(ef_B)\parallel^2&=& \max\parallel\Phi(ef_B)(g)\parallel^2\nonumber\\
  &=&\max\parallel g^{-1}e\otimes f_B(g)\parallel^2\nonumber\\
  &=&\max\parallel\langle g^{-1}e\otimes f_B(g),g^{-1}e\otimes f_B(g)\rangle\parallel\nonumber\\
  &=&\max\parallel f_B (g)^\ast\langle g^{-1}e,g^{-1}e\rangle(1) f_B(g)\parallel\nonumber\\
  &=&\max\parallel f_B (g)^\ast g^{-1}\langle e,e\rangle(1) f_B(g)\parallel,\nonumber
\end{eqnarray}
\begin{eqnarray}
\parallel ef_B\parallel^2&=& \max\parallel ef_B(g)\parallel^2\nonumber\\
   &=&\max\parallel f_B(g)^\ast \langle e,e\rangle(g) f_B(g)\parallel\nonumber\\
   &=&\max\parallel f_B(g)^\ast g^{-1}\langle e,e\rangle(1) f_B(g)\parallel.\nonumber
\end{eqnarray}So, $\Phi$ preserves the norm.
\begin{eqnarray}
\Phi(ef_Bf'_B)(g)&=&g^{-1}e\otimes f_B(g)f'_B(g)=(g^{-1}e\otimes f_B(g))f'_B(g)=\Phi(ef_B)(g)f'_B(g)\nonumber\\
  &=&(\Phi(ef_B)f'_B)(g),\nonumber
\end{eqnarray}
\begin{eqnarray}
g\Phi(ef_B)(g_1)&=&\Phi(ef_B)(g^{-1}g_1)\nonumber\\
   &=&(g^{-1}g_1)^{-1}e\otimes f_B(g^{-1}g_1)\nonumber\\
   &=&g^{-1}_1 ge\otimes gf_B(g_1)\nonumber\\
   &=&\Phi((ge)(gf_B))(g_1)\nonumber\\
   &=&\Phi(g(ef_B))(g_1).\nonumber
\end{eqnarray}So, $\Phi$ is a $G$-Hilbert $B\otimes C(G/T)$-module map. And it is clear that $\Phi$ is surjective so it defines an isomorphism between $Ind^G_T(res^G_T (V)\otimes_{ev}B)$ and $V$ as $G$-Hilbert $B\otimes C(G/T)$ modules.

\vspace{3mm}

\emph{Claim}: For any $a\in A$, the following diagram is commutative:
\[\begin{CD}
V@>{\psi(a)}>>V\\
@VV {\Phi} V    @VV {\Phi} V\\
Ind^G_T(res^G_T V\otimes_{ev}B)@> {Ind^G_T(res^G_T\psi\otimes Id_B)\circ i(a)}>> Ind^G_T(res^G_T V\otimes_{ev}B)\\
\end{CD}\]

\emph{Proof of claim}: For any $e\in V$, $f_B\in Ind^G_T(B)\cong B\otimes C(G/T)$, $g\in G$,
\begin{eqnarray}
\Phi(\psi(a)(ef_B))(g)&=&\Phi((\psi(a)(e))f_B)(g)\nonumber\\
  &=&g^{-1}(\psi(a)(e))\otimes f_B(g)\nonumber\\
  &=&\psi(g^{-1}a)(g^{-1}e)\otimes f_B(g).\nonumber
\end{eqnarray}The last equality is due to: 
\begin{eqnarray}
\psi(g^{-1}a)(g^{-1}e)&=&g^{-1}\psi(a)gg^{-1}e=g^{-1}\psi(a)(e).\nonumber
\end{eqnarray}
On the other hand,
\begin{eqnarray}
(Ind^G_T(res^G_T\psi\otimes Id_B)\circ i(a))(\Phi(ef_B))(g)&=& (Ind^G_T(res^G_T\psi\otimes Id_B))(K_a)(\Phi(ef_B))(g) \nonumber\\
  &=& (res^G_T\psi\otimes Id_B)(K_a(g))(\Phi(ef_B)(g))\nonumber\\
  &=& (res^G_T\psi\otimes Id_B)(g^{-1}a)(g^{-1}e\otimes f_B(g))\nonumber\\
  &=& \psi(g^{-1}a)(g^{-1}e)\otimes f_B(g).\nonumber
\end{eqnarray}It proves the claim.

\vspace{3mm}

\emph{Claim}: The following diagram is commutative:
\[\begin{CD}
V@>{W}>>V\\
@VV {\Phi} V    @VV {\Phi} V\\
Ind^G_T(res^G_T V\otimes_{ev}B)@> {Ind^G_T(res^G_T W\otimes Id_B)}>> Ind^G_T(res^G_T V\otimes_{ev}B)\\
\end{CD}\]

\emph{Proof of claim}: For any $v\in V$, $f_B\in Ind^G_T(B)$, $g\in G$,
\begin{eqnarray}
Ind^G_T(res^G_T W\otimes Id_B)(\Phi(vf_B))(g)&=& (res^G_T W\otimes Id_B)(\Phi(vf_B)(g))\nonumber\\
   &=& (res^G_T W\otimes Id_B)(g^{-1}v\otimes f_B(g))\nonumber\\
   &=& W(g^{-1}v)\otimes f_B(g),\nonumber
\end{eqnarray}
\begin{eqnarray}
\Phi\circ W(vf_B)(g)&=& \Phi(W(vf_B))(g)=\Phi(W(v)f_B)(g)\nonumber\\
   &=& g^{-1}(W(v))\otimes f_B(g).\nonumber
\end{eqnarray}Since $W$ is $G$-invariant, then
\begin{eqnarray}
g^{-1}(W(v))&=& g^{-1}(W(gg^{-1}v))=g^{-1}.W(g^{-1}v)=W(g^{-1}v).\nonumber
\end{eqnarray}The last equality is by the $G$-invariance of $W$. Hence we have shown that $y=i^\ast\circ ind^G_T\circ ev_\ast \circ res^G_T(y)\in KK_G(A,B\otimes C(G/T))$. It concludes our proof of the theorem.

\section{Proof of Theorem  \ref{Rosenberg_result}}\label{Rosenbergsection}

In this section, we give a sketch proof of Theorem \ref{Rosenberg_result}:

\vspace{3mm}

\begin{proof}
The basic idea is similar to the one proved by Rosenberg and Schochet in Theorem 3.7 (i) of \cite{RosenbergSchochet} for the case of $K$-theory of $\mathrm{C}^\ast$-algebras. Therefore we content ourselves here with a sketch of the proof. A special case of Theorem 4.10 in \cite{Kasparov2} showed that there is a Poincar\'{e} duality\[\delta\colon KK_G(C(G/T),\mathbb{C})\rightarrow KK_G(\mathbb{C},C(G/T))\]which is an isomorphism. And more generally, we have an isomorphism \[\delta_{C(G/T)}\colon KK_G (C(G/T),C(G/T))\rightarrow KK_G (\mathbb{C},C(G/T)\otimes C(G/T)).\]By a theorem of McLeod \cite{McLeod}, \[KK_G(\mathbb{C},C(G/T)\otimes C(G/T))\cong K_G^\ast(G/T \times G/T)\cong K_T^\ast(G/T)\cong R(T)\otimes_{R(G)}R(T).\]Steinberg's theorem \cite{Steinberg} provides a free basis $\{e_\omega\}_{\omega\in W}$ for $R(T)$ as a $R(G)$-module, where $W\cong N(T)/T$ is the Weyl group of $(G,T)$. Then there exist an unique set of elements $\{b_\omega\}_{\omega\in W}$ of $R(T)\cong KK_G(\mathbb{C},C(G/T))$ such that \[\delta_{C(G/T)}(1_{C(G/T)})=\sum_{\omega\in W}b_\omega\otimes_\mathbb{C}e_\omega.\]Note that $\otimes_\mathbb{C}$ is the Kasparov product. For $\omega\in W$, let \[a_\omega=\delta^{-1}(b_\omega).\]Then we have, for $1_{C(G/T)}\in KK_G(C(G/T),C(G/T))$, 
\begin{eqnarray}
1_{C(G/T)}&=&\delta_{C(G/T)}^{-1}(\delta_{C(G/T)}(1_{C(G/T)}))\nonumber\\
   &=&\delta_{C(G/T)}^{-1}(\sum_{\omega\in W}b_\omega\otimes_{\mathbb{C}}e_\omega)\nonumber\\
   &=&\sum_{\omega\in W}\delta^{-1}(b_\omega)\otimes_{\mathbb{C}}e_\omega\nonumber\\
   &=&\sum_{\omega\in W}a_\omega\otimes_\mathbb{C} e_\omega.\nonumber
\end{eqnarray}The third equality is done by associativity of Kasparov product. Then we have the following calculation for any $v\in W$: 
\begin{eqnarray}
e_v&=& e_v\otimes_{C(G/T)}1_{C(G/T)}=e_v\otimes_{C(G/T)}(\sum_{\omega\in W}a_\omega\otimes_\mathbbm{C}e_\omega)\nonumber\\
   &=& \sum_{\omega\in W}(e_v\otimes_{C(G/T)}a_\omega)\otimes_\mathbb{C}e_\omega\nonumber
\end{eqnarray}which means that if $v=\omega$, $
e_v\otimes_{C(G/T)}a_\omega=1_{R(G)}$. And $e_v\otimes_{C(G/T)}a_\omega=0$ otherwise. For any element $y\in KK_T(A,B)\cong KK_G(A,B\otimes C(G/T))$ (the isomorphism is by Theorem \ref{Wasserman1}), 
\begin{eqnarray}
y &=& y\otimes_{C(G/T)}1_{C(G/T)}\nonumber\\
  &=& y\otimes_{C(G/T)}(\sum_{\omega\in W}a_\omega\otimes_\mathbb{C}e_\omega)\nonumber\\
  &=& \sum_{\omega\in W}(y\otimes_{C(G/T)}a_\omega)\otimes_\mathbb{C}e_\omega. \label{equation1}
\end{eqnarray}Note that $y\otimes_{C(G/T)}a_\omega\in KK_G(A,B)$. If \[y=\sum_{\omega\in W}x_\omega\otimes_\mathbbm{C}e_\omega\]for some $x_\omega\in KK_G(A,B)$, then
\begin{eqnarray}
y\otimes_{C(G/T)}a_u&=&(\sum_{\omega\in W}x_\omega\otimes_\mathbb{C}e_\omega)\otimes_{C(G/T)}a_u\nonumber\\
  &=&\sum_{\omega\in W}x_\omega\otimes_\mathbb{C}(e_\omega\otimes_{C(G/T)}a_u)\nonumber\\
  &=&\sum_{\omega\in W}x_\omega\otimes_\mathbb{C}\delta_{uw}\nonumber\\
  &=&x_u.\nonumber
\end{eqnarray}Hence, equation (\ref{equation1}) is an unique expression for $y\in KK_T(A,B)$. It means that $KK_T(A,B)$ and $R(T)\otimes_{R(G)}KK_G(A,B)$ are isomorphic as $R(G)$-modules. It is clear that they are also isomorphic as $R(T)$-modules.
\end{proof}

\section*{Acknowledgements}
The author would like to thank Reyer Sjamaar for all his encouragements and his suggestion of this research topic. The author would also like to thank the referee for useful suggestions.

\vspace{3mm}

\small{School of Liberal Arts and Sciences, Canadian University of Dubai, UAE}

\emph{Email: leung@cud.ac.ae}


\begin{thebibliography}{0}
\bibitem[1]{Atiyah} M. Atiyah, Bott periodicity and the index of elliptic operators, {\small\it Quart. J. Math. Oxford Ser. (2)} {\small\bf 19} (1968) 113-140.

\bibitem[2]{AtiyahBott} M. Atiyah and R. Bott, A Lefschetz fixed point formula for elliptic complexes: II. Applications., {\small\it Ann. of Math.} {\small\bf 88} (1968) 451-491.

\bibitem[3]{Blackadar} B. Blackadar, {\small\it $K$-Theory for Operator Algebras}, Second Edition, Cambridge Uni. Press, (1998).

\bibitem[4]{Borel} A. Borel and F. Hirzebruch, Characteristic classes and homogeneous spaces, I, {\small\it American J. of Math.} {\small\bf 80} (1958), no. 2, 458-538.

\bibitem[5]{Bott} R. Bott, The index theorem for homogeneous differential operators, in {\small\it Differential and Combinatorial Topology, A Symposium in Honor of Marston Morse (Princeton, NJ)}, eds. S. Cairns, Princeton Mathematical Series, vol.~27, Princeton University Press, (1965) 167-186.

\bibitem[6]{brocker} T. Brocker and T. tom Dieck, {\small\it Representations of Compact Lie Groups} (Springer 1985).

\bibitem[7]{Demazure1} M. Demazure, Invariants sym\'{e}triques entiers es groupes de Weyl et torsion, {\small\it Invent. Math.} {\small\bf 21} (1973) 287-301.

\bibitem[8]{Demazure2} M. Demazure, D\'{e}singularisation des vari\'{e}t\'{e}s de Schubert g\'{e}r\'{e}ralis\'{e}es, {\small\it Ann. Sci. \'{E}cole Norm. Sup. (4)} {\small\bf 7} (1974) 53-88.

\bibitem[9]{Demazure} M. Demazure, Une nouvelle formule des caract\`{e}res, {\small\it Bull. Sci. Math. (2)} {\small\bf 98} (1974), no. 3, 163-172.

\bibitem[10]{HHaradaLandweber} M. Harada and G.D. Landweber, The $K$-theory of abelian symplectic quotients, {\small\it Math. Res. Lett.} {\small\bf 15} (2008) no.1, 57-72.

\bibitem[11]{Sjamaar} M. Harada, G. Landweber and R. Sjamaar, Divided differences and the Weyl character formula in equivariant $K$-theory, {\small\it Math. Res. Lett.} {\small\bf 17} (2010) no. 3, 507-527.

\bibitem[12]{HigsonRoe} N. Higson and J. Roe, {\small\it Analytic $K$-homology}, Oxford Mathematical Monographs, (Oxford 2000).

\bibitem[13]{Hodgkin} L. Hodgkin, {\small\it The equivariant K\"{u}nneth theorem in $K$-theory}, Lecture notes in Mathematics, vol. ~496, Springer, (1975) 1-101.

\bibitem[14]{JensenThomsen} K. K. Jensen and K. Thomsen, {\small\it Elements of $KK$-theory}, (Birkh\"{a}user 1991).

\bibitem[15]{Julg} P. Julg, Induction holomorphe pour le produit crois\'{e} d'une $\mathrm{C}^\ast$-alg\`{e}bre par un groupe de Lie compact, {\small\it C.R. Acad. Sci. Paris, S\'{e}r. I} {\small\bf 294} (1982) 193-196.

\bibitem[16]{Kasparov1} G. G. Kasparov, Topological invariants of elliptic operators, I: $K$-homology, {\small\it Izv. Akad. Nauk SSSR Ser. Mat.} {\small\bf 39} (1975) no.4, 796-838, in Russian; translated in Math. USSR Izv. {\bf 9} (1975) no.4, 751-792.

\bibitem[17]{Kasparov2} G. G. Kasparov, Equivariant $KK$-theory and the Novikov conjecture, {\small\it Invent. Math.} {\small\bf 91} (1988) no.1, 147-201.

\bibitem[18]{Kazhdan} D. Kazhdan and G. Lusztig, Proof of Deligne-Langlands conjecture for Hecke algebras, {\small\it Invent. Math.} {\small\bf 87} (1987) no.1, 153-215.

\bibitem[19]{Leung} H-H. Leung, $K$-theory of Weight Varieties, {\small\it New York J. Math.} {\small\bf 17} (2011) 251-267.

\bibitem[20]{McLeod} J. McLeod, {\small\it The K\"{u}nneth formula in equivariant $K$-theory}, Algebraic Topology (Waterloo, 1978), eds. P. Hoffman and V. Snaith, Lecture Notes in Mathematics, vol. ~741, Springer-Verlag, Berlin, (1979) pp. 316-333.

\bibitem[21]{Murphy} G. J. Murphy, {\small\it $\mathrm{C}^\ast$-algebras and operator theory}, (Academic Press Inc. 1990).

\bibitem[22]{RosenbergSchochet} J. Rosenberg and C. Schochet, {\small\it The K\"{u}nneth theorem and the universal coefficient theorem in equivariant $K$-theory and $KK$-theory}, Mem. Amer. Math. Soc. {\small\bf 62} (1986), no. 348.

\bibitem[23]{Snaith} V. Snaith, On the K\"{u}nneth formula spectral sequence in equivariant $K$-theory, {\small\it Proc. Cambridge Philos. Soc.} {\small\bf 72} (1972) 167-177.

\bibitem[24]{Steinberg} R. Steinberg, On a theorem of Pittie, {\small\it Topology} {\small\bf 14} (1975) 173-177.

\bibitem[25]{Valette} A. Valette, {\small\it Introduction to the Baum-Connes Conjecture}, Lectures in Mathematics ETH Z\"{u}rich, (Birkh\"{a}user Verlag,  2002).

\bibitem[26]{Wasserman} A. Wasserman, Equivariant $K$-theory II: the Hodgkin spectral sequence in Kasparov theory, {\small\it Preprint}, (1983).
\end{thebibliography}
\end{document}